\newtheorem{theorem}{Theorem}[section]
\newtheorem{lemma}[theorem]{Lemma}
\theoremstyle{definition}
\newtheorem{definition}[theorem]{Definition}
\newtheorem{xca}[theorem]{Exercise}
\newtheorem{corollary}[theorem]{Corollary}
\theoremstyle{remark}
\newtheorem{remark}[theorem]{Remark}
\newtheorem{proposition}[theorem]{Proposition}
\numberwithin{equation}{section}
\newcommand{\e}{\varepsilon}
\newcommand{\Ue}{U_\varepsilon}
\newcommand{\Uq}{U_q}
\newcommand{\mc}{\mathbb{C}}
\newcommand{\mz}{\mathbb{Z}}
\newcommand{\mn}{\mathbb{N}}
\newcommand{\Aa}{{\mathcal A}}
\newcommand{\Dd}{{\mathcal D}}
\newcommand{\Gg}{{\mathcal G}}
\newcommand{\Hh}{{\mathcal H}}
\newcommand{\Ii}{{\mathcal I}}
\newcommand{\Jj}{{\mathcal J}}
\newcommand{\Ll}{{\mathcal L}}
\newcommand{\Oo}{{\mathcal O}}
\newcommand{\Rr}{{\mathcal R}}
\newcommand{\xg}{x_{\chi}}
\newcommand{\yg}{y_{\chi}}
\newcommand{\zg}{z_{\chi}}
\newcommand{\cg}{c_{\chi}}
\newcommand{\g}{\chi}
\def\1{{\rm 1\kern-.8ex 1}}
\newcommand{\ra}{\rightarrow}
\newcommand{\lra}{\longrightarrow}
\newcommand{\fonc}[5]{                     
            \begin{array}{lcll}#1 :& #2 & \lra & #3 \\   %
                         &#4 &\longmapsto & #5          %
            \end{array}}
\newcommand{\fleche}[4]{                     
            \begin{array}{ccl} #1 & \lra & #2 \\   %
                         #3 &\longmapsto & #4          %
            \end{array}}
\newcommand{\psixj}[6]{\left[ \begin{array}{ccc} #1 & #2 & #3 \\ #4 & #5 & #6\end{array}\right]_\e}
\begin{document}

\title[Quantum coadjoint action and the $6j$-symbols of $U_qsl_2$]{Quantum coadjoint action and the $6j$-symbols of $U_qsl_2$}

\author{St\'ephane Baseilhac}
\address{Institut Fourier, Universit\'e de Grenoble, 100 rue des Maths BP 74, 38402 Saint-Martin-d'H\`eres Cedex, France}
\curraddr{Universit\'e Montpellier 2, Institut de Math\'ematiques et de Mod\'elisation, Case Courrier 051, Place Eug\`ene Bataillon, 34095 Montpellier Cedex, France}
\email{stephane.baseilhac@math.univ-montp2.fr}
\thanks{This work was supported by the grant ANR-08-JCJC-0114-01 of the French Agence Nationale de la Recherche.}

\subjclass[2010]{Primary 17B37, 57R56; Secondary 14L24}
\date{January 31, 2010 and, in revised form, XXX.}

\dedicatory{Dedicated to my advisor, Claude Hayat-Legrand, on the occasion of her 65th birthday.}

\keywords{quantum groups, Poisson-Lie groups, coadjoint action, geometric invariant theory, dilogarithm functions, TQFT, invariants of $3$-manifolds}

\begin{abstract} We review the representation theory of the quantum group $U_\e sl_2\mc$ at a root of unity $\e$ of odd order, focusing on geometric aspects related to the $3$-dimensional quantum hyperbolic field theories (QHFT). Our analysis relies on the quantum coadjoint action of De Concini-Kac-Procesi, and the theory of Heisenberg doubles of Poisson-Lie groups and Hopf algebras. We identify the $6j$-symbols of generic representations of $\Ue sl_2\mc$, the main ingredients of QHFT, with a bundle morphism defined over a finite cover of the algebraic quotient $PSL_2\mc/\!/PSL_2\mc$, of degree two times the order of $\e$. It is characterized by a non Abelian $3$-cocycloid identity deforming the fundamental five term relation satisfied by the classical dilogarithm functions, that relates the volume of hyperbolic $3$-polyhedra under retriangulation, and more generally, the simplicial formulas of Chern-Simons invariants of $3$-manifolds with flat $sl_2\mc$-connections.
\end{abstract}

\maketitle




\section{Introduction}
After more than twenty years of outstanding efforts, the geometry of quantum groups, especially the relationships between their representation theories at roots of unity and the underlying Lie groups, remains a prominent matter of quantum topology. It stands, for instance, in the background of the geometric realization problem of the combinatorially defined state spaces of the Reshetikhin-Turaev TQFT (\cite{BHMV}, \cite{Tu}), the deformation quantization of character varieties of $3$-manifolds via skein modules \cite{BFK,PS}, or the asymptotic expansion of quantum invariants (\cite[Ch. 7]{Prob}, \cite{MN}).

In recent years, new and completely unexpected interactions between the {\it non restricted} quantum group $U_q = U_qsl_2\mc$ and $3$-dimensional hyperbolic geometry have been revealed by the volume conjecture \cite{Kh1}, and the subsequent development of the quantum hyperbolic field theories (QHFT) \cite{BB1,BB2,BB3} and quantum Teichm\"uller theory \cite{BL}. The global picture is that many of the fondamental invariants of hyperbolic (ie. $PSL_2\mc$) geometry, like the volume of hyperbolic manifolds, and more generally (some of the) Chern-Simons invariants of $3$-manifolds with flat $sl_2\mc$-connections, should be determined by the semi-classical limits of these quantum algebraic objects. 

Like any topological quantum field theory, the QHFT are symmetric tensor functors defined on a category of $3$-dimensional bordisms. For QHFT, these are equipped with additional structures given by refinements of holonomy representations in $PSL_2\mc$, and links up to isotopy. The QHFT are built from "local" basic datas, universally encoded by the moduli space of isometry classes of ``flattened'' (a kind a framing) hyperbolic ideal $3$-simplices,  and the associativity constraint, or {\it $6j$-symbols}, of the cyclic representations of a Borel subalgebra $U_\e b$ of $U_\e$ at a primitive root of unity $\e$ of odd order. Working with such a moduli space allows one to define the QHFT also for $3$-dimensional bordisms equipped with holonomies having singularities along the links, like cusped hyperbolic manifolds (the link being ``at infinity''), and to get surgery formulas. For cylindrical bordisms the QHFT are related by this way to the local version of quantum Teichm\"uller theory (\cite{BB3},\cite{Bai,BBL}).

Similarly, the Chern-Simons gauge theory for flat $sl_2\mc$-connections, which was originally derived from the integral complex-valued invariant ``volume'' $3$-form of $SL_2\mc$ via secondary characteristic class theory,  defines a functor by means of the same moduli space as the QHFT, using Neumann's simplicial formulas \cite{N} in place of quantum state sums, where a dilogarithm function formally corresponds to the cyclic $6j$-symbols of $U_\e b$ (\cite{BB2}, see also \cite{Ma}). In fact, both maps are $3$-cocycloids in a natural way, the latter on the category of cyclic $U_\e b$-modules with (partially defined) tensor product, and the former on the group $PSL_2\mc$ with discrete topology, via Neumann's isomorphism of $H_3(BPSL_2\mc^\delta;\mz)$ with a certain extension of the Bloch group. 

In this paper we consider this interplay of Abelian vs. non Abelian cohomological structures, which certainly concentrates a key part of the quantization procedure relating the Chern-Simons theory for $PSL_2\mc$ to the QHFT. We describe in detail the non Abelian part of the story, that is, how the simple $\Ue$-modules ``fiber'' over $PSL_2\mc$, and the $6j$-symbols of {\it regular} $U_\e$-modules (rather than the cyclic $U_\e b$-modules). By the way we indicate common features and discrepancies with the $6j$-symbols of the {\it color modules} of the {\it restricted} quantum group $\bar{U}_\e$. 

We point out also that the cyclic $6j$-symbols of $U_\e b$, or (basic) {\it matrix dilogarithms}, coincide with the regular $6j$-symbols of $U_\e$. More precisely, we define a bundle $\Xi^{(2)}$ of regular $\Ue$-modules over a covering of degree $n^2$ ($n$ being the order of $\e$) of a smooth subset of a Poisson-Lie group $H^2$ dual to $PSL_2\mc^2$, endowed with an action of an infinite dimensional Lie group derived from the quantum coadjoint action of De Concini-Kac-Procesi, originally defined for $\Ue$. We have (see Theorem \ref{Rmorphism},  \ref{equivariant} and \ref{reduction} for precise statements):
\begin{theorem} \label{teointro} The regular $6j$-symbols of $U_\e$ and the matrix dilogarithms coincide and define a bundle morphism $\Rr\colon \Xi^{(2)} \lra \Xi^{(2)}$ equivariant under the quantum coadjoint action. 
\end{theorem}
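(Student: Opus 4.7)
The plan is to address the three components of the theorem in turn, corresponding to Theorems \ref{Rmorphism}, \ref{equivariant}, and \ref{reduction}. First I would construct $\Rr$ fiberwise as the associativity intertwiner between rebracketings of threefold tensor products of regular $\Ue$-modules, and then verify algebraicity in the parameters. The passage to the $n^2$-fold cover of the smooth part of $H^2$ should be used precisely to resolve the indeterminacies arising from the choice of $n$-th roots when decomposing tensor products of regular modules into simple summands; on the cover, these indeterminacies unfold into canonical sections, intermediate simple modules organize into line bundles, and the associativity intertwiner becomes a morphism of finite-dimensional vector bundles, establishing the bundle morphism statement.

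Next, for equivariance under the quantum coadjoint action, I would use the fact that this action is generated by one-parameter groups of algebra automorphisms $\sigma$ of $\Ue$ whose action on the center prolongs the classical coadjoint action of $PSL_2\mc^2$ on $H^2$. An automorphism $\sigma$ sends a regular simple module with parameter $h$ to one canonically isomorphic to the simple module with parameter $\sigma\cdot h$, and equivariance of $\Rr$ will follow from the naturality of the associativity constraint with respect to the resulting intertwiners, provided one checks that $\sigma$ respects the coproduct up to an explicit inner twist. The identification with matrix dilogarithms is then obtained by restriction to the Borel subalgebra $\Ue b$: a regular $\Ue$-module remains cyclic simple upon restriction, with cyclic parameters that are explicit functions of the $\Ue$-parameters, and the regular associativity intertwiner must coincide with the cyclic one, whose matrix coefficients are the basic matrix dilogarithms.

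The hard part will be the coincidence of the two sets of $6j$-symbols on the nose, rather than merely up to scalars or a change of normalization. Both are intrinsically defined only up to a choice of bases in the relevant fibers of $\Xi^{(2)}$, and establishing exact equality requires working with trivializations adapted simultaneously to the Heisenberg double structure on $\Ue b$ and to the Poisson-Lie structure on $H^2$. A further delicate point is that all three properties --- bundle morphism, equivariance, and reduction to dilogarithms --- must be verified consistently with respect to the same fixed trivialization of the $n^2$-fold cover, so that the three assertions of the theorem become simultaneously meaningful.
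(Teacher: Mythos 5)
Your plan correctly identifies the three subtheorems that the paper's Theorem~\ref{teointro} compresses, and the overall scaffolding (construct $\Rr$ fiberwise as the associativity intertwiner, show it is a bundle morphism, prove $\Gg$-equivariance, reduce to the Borel case for the dilogarithm identification) is the right shape. However, you are missing the paper's central mechanism: the realization of $\Rr$ as the \emph{canonical element of the Heisenberg double} $\Hh_h = \Hh(U_h)$ in its Heisenberg representation. The paper does not build $\Rr$ from fiberwise intertwiners and then patch; it produces a single universal operator $R \in \Hh_h\otimes \Hh_h$ satisfying \eqref{constpent2} (an a priori structural pentagon identity), proves via Proposition~\ref{propcan} that its Heisenberg representation equals $({\rm id}\otimes \mu)(\Delta\otimes {\rm id})$, and then specializes through the embeddings $U_q\hookrightarrow U_h$, $\Hh_q\hookrightarrow \Hh_h$ to obtain the bundle morphism on $\Xi^{(2)}$. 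The bundle-morphism property, the pentagon, and the equivariance all follow from properties of this single universal element; none of them is verified chart by chart. Your fiberwise approach would require a separate and rather delicate argument to show that the choices of splittings in \eqref{splitmodule1} vary algebraically, and that the resulting local data glue --- a real gap your proposal flags but does not fill.

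The more serious problem is your equivariance argument. You propose to deduce it from ``naturality of the associativity constraint with respect to the resulting intertwiners, provided one checks that $\sigma$ respects the coproduct up to an explicit inner twist.'' But the automorphisms generating $\Gg$ do \emph{not} respect the coproduct up to an inner twist: the derivation $D_e$ satisfies \eqref{coactGXi}, namely $\Delta(D_e(u)) = (D_e\otimes 1 + z\otimes D_e - D_z\otimes zx)\Delta(u)$, and the term $-D_z\otimes zx$ is not of the form $[\xi\otimes\eta,\,\cdot\,]$ for fixed elements. Moreover the elements of $\Gg$ are not morphisms in $\Ue$-Mod (they change module structures), so naturality of the associator is not available. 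The paper instead deduces equivariance from the intertwining identities $R(u\otimes 1) = \Delta(u)R$ and $(1\otimes x)R = R\Delta(x)$ of Proposition~\ref{propcan}(ii), which are structural facts about Heisenberg doubles and give the two commuting actions that leave $\Rr$ invariant. Finally, your worry about coincidence ``on the nose'' is somewhat misplaced: Theorem~\ref{reduction} only claims coincidence with the matrix dilogarithm \emph{up to conjugacy}, which is what the restriction-to-Borel argument yields and what the intrinsic $n$-th root ambiguity permits.
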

Since the quantum coadjoint action lifts the adjoint action of $PSL_2\mc$ via an unramifield $2$-fold covering $H\ra PSL_2\mc^0$ of the big cell of $PSL_2\mc$, it will follow that $\Rr$ descends to a morphism of a vector bundle of rank $n^2$ over a $2n$-fold covering of the algebraic quotient $PSL_2\mc/\!/PSL_2\mc$. 

The remarkable dependence of the matrix dilogarithms on cross-ratios of $4$-tuples of points on $\mathbb{P}^1$, which was previously known by direct computation and allowed the QHFT to be defined on the moduli space of flattened hyperbolic ideal tetrahedra, is a consequence of Theorem \ref{teointro}. This alternative description is presented in Section \ref{MATDIL}.

In order to achieve our goals we have to review quite a lot of material, starting from the basic properties of $U_\e$ (Section \ref{PBW}), and developing in detail its representation theory and quantum coadjoint action (Section \ref{MODULES} and \ref{QCA}, respectively). The $6j$-symbols of the color modules of $\bar{U}_\e$ and of the regular modules of $\Ue$ are defined in Section \ref{CGO}. Theorem \ref{teointro} is proved in Section \ref{6JMORPH}. There we make a crucial use of fundamental results of Semenov-Tian-Shansky \cite{STS}, Weinstein-Xu \cite{WX}, and Lu \cite{Lu2} on Poisson-Lie groups and their doubles and quantizations.

There should be no obstruction to extend Theorem \ref{teointro} to the quantum groups $U_\e \mathfrak{g}$ of arbitrary complex simple Lie algebras $\mathfrak{g}$. The corresponding quantum coadjoint action theory is described in \cite{DCK}, \cite{DCKP}, \cite{DCP}, and \cite{DCPRR}. Recent works of Geer and Patureau-Mirand \cite{GP} show that the QHFT setup extends to "relative homotopy quantum field theories", including TQFT associated to the categories of finite dimensional weight $U_\e \mathfrak{g}$-modules. A challenging problem is to relate them to the three-dimensional Chern-Simons theory for flat $\mathfrak{g}$-connections. 

Finally, let us note that a similar approach can be used to describe ``holonomy'' $R$-matrices for the regular $U_\e \mathfrak{g}$-modules, in the spirit of \cite{KaRe}.

We will often meet notions from the theory of Poisson-Lie groups. The reader will find the needed material in standard textbooks, like \cite{CP}, \cite{ES} and \cite{KS}.
 
This paper is based on my lectures at the workshop ``Interactions Between Hyperbolic Geometry, Quantum Topology and Number Theory'' held at Columbia University (June 3rd - 19th, 2009). I am grateful to the organizers for their hospitality and wonderful working conditions during my stay. 

\section{The quantum group $U_qsl_2$}\label{PBW}

\subsection{Definition $\&$ PBW basis} We fix our ground ring to be $\mc$, and denote by $q$ a complex number such that $q\ne -1,0,1$. When $q$ is constrained to be a root of unity we denote it by $\e$, and we assume that $\e$ has odd order $n\geq 3$. 

\begin{definition} \label{qg} The quantum group $\Uq = U_qsl_2$ is the algebra generated over $\mc$ by elements $E$, $F$, $K$ and $K^{-1}$, with defining relations $KK^{-1} = K^{-1}K = 1$ and
\begin{equation}\label{nonres}
 KEK^{-1} = q^2 E\ ,\ KFK^{-1} = q^{-2}F\ ,\ \left[E,F\right] = \frac{K-K^{-1}}{q -q^{-1}}.
\end{equation}
The algebra $U_q$ is a \textit{Hopf algebra} with coproduct $\Delta \colon U_q \ra U_q\otimes U_q$, antipode $S\colon U_q\ra U_q$ and counit $\eta\colon U_q \ra \mc$ defined on generators by
\begin{equation}\label{nonres2}\begin{array}{c} \Delta(K) = K \otimes K\\ \Delta(E) = E \otimes 1 + K\otimes E,\ \Delta(F) = 1\otimes F + F \otimes K^{-1}\\S(K) = K^{-1},\ S(E) = -K^{-1}E,\ S(F) = -FK\\
\eta(K)=1,\ \eta(E) = \eta(F) = 0.\end{array}\end{equation}
\end{definition}
To make sense of this definition, let us just recall here that being a Hopf algebra means that $\Delta$ and $\eta$ are morphisms of algebras satisfying the \textit{coassociativity} and \textit{counitality} constraints
\begin{equation}\label{coass} 
 (\Delta \otimes {\rm id})\circ \Delta = ({\rm id}\otimes \Delta)\circ \Delta\quad ,\quad (\eta \otimes {\rm id})\circ \Delta = ({\rm id}\otimes \eta)\circ \Delta = {\rm id}.
\end{equation}
Here the algebra structure of $U_q\otimes U_q$ is by componentwise multiplication, and $\Uq$ is identified with $\Uq\otimes \mc$ in the canonical way. The antipode $S$ is the inverse of the identity for the convolution product, that is, it satisfies
\begin{equation}\label{antipode}
 \mu\circ ({\rm id}\otimes S)\circ \Delta = \mu\circ (S \otimes {\rm id})\circ \Delta = \eta 1,
\end{equation}
where $\mu\colon U_q\otimes U_q\ra U_q$ is the product. In particular, this implies $S(1)=1$, $\eta \circ S = \eta$, $S(xy) = S(y)S(x)$ for all $x$, $y\in U_q$, and \begin{equation}\label{copS} (S\otimes S)\Delta = \tau\circ \Delta \circ S,\end{equation} where $\tau(u\otimes v)=v\otimes u$ is the flip map (\cite[Ch. III]{K}).
\begin{remark} \label{simply-connectedUq} A \textit{simply-connected} version of $U_q$ is often considered in the litterature  (see eg. \cite[\S 9]{DCP}). It is obtained by adding  to the generators a square root of $K$ acting by conjugation on $E$ and $F$ by multiplication by $q$ and $q^{-1}$, respectively.
 \end{remark}
\begin{xca}\label{UEA} Define an algebra $U_q'$ with generators $E$, $F$, $K$ and $K^{-1}$ satisfying the same relations as in $U_q$ except the last one in \eqref{nonres}, and with a further generator $L$ such that
\begin{equation}\begin{array}{c}
 \left[E,F\right]=L\ , \ (q-q^{-1})L=K-K^{-1}\\
 \left[L,E\right] = q(EK+K^{-1}E)\ , \ \left[L,F\right] = -q^{-1}(FK+K^{-1}F).\end{array}
\end{equation}
Show that we have isomorphisms $U_q \cong U_q'$, $U_1' \cong U[K]/(K^2-1)$, $U \cong U_1'/(K-1)$, where $U=Usl_2$ is the universal enveloping algebra of $sl_2$. 
\end{xca}
This exercise shows that, as suggested by the notation, $U_q$ is a genuine deformation depending on the complex parameter $q$ of the universal enveloping algebra of $sl_2$. Like the latter, we can think of $U_q$ as a ring of polynomials (in non commuting variables). In particular, we have the following fundamental result (see \cite[Th. 1.5-1.8]{J} or \cite[Th. VI.1.4]{K} for a proof): 
\begin{theorem} \label{PBWteo} 1) (PBW basis) The monomials $F^tK^sE^r$, where $t$, $r\in \mn$ and $s\in \mz$, make a linear basis of $U_q$.  

2) The algebra $U_q$ has no zero divisors and is given a grading by stipulating that each monomial $F^tK^sE^r$ is homogeneous of degree $r-t$.\end{theorem}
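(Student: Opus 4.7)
The plan is to split the proof of (1) into a spanning argument and a linear independence argument, and to deduce (2) from the homogeneity of the defining relations together with an iterated skew polynomial description of $U_q$.

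For spanning, I would use the defining relations as left-to-right rewriting rules pushing $F$'s to the left, $K^{\pm 1}$'s to the middle, and $E$'s to the right: namely $KE \rightsquigarrow q^2 EK$, $KF \rightsquigarrow q^{-2} FK$, and $EF \rightsquigarrow FE + (q-q^{-1})^{-1}(K-K^{-1})$, together with $KK^{-1}=K^{-1}K=1$. A double induction --- outer on the total number of $E$'s plus $F$'s appearing in a word, inner on the number of ``$E$ before $F$'' inversions --- reduces any word in the generators to a linear combination of ordered monomials $F^t K^s E^r$. The third rule strictly drops the inversion count modulo one new term whose $E$-plus-$F$ length has dropped by two, making the induction well-founded.

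For linear independence I would realize $U_q$ as an iterated Ore (skew polynomial) extension. Start with $A=\mc[K,K^{-1}]$, adjoin $E$ via the automorphism $\sigma_E$ of $A$ with $\sigma_E(K)=q^{-2}K$ to obtain $B=A[E;\sigma_E]$, then adjoin $F$ via the automorphism $\sigma_F$ of $B$ defined by $\sigma_F(K)=q^2 K$, $\sigma_F(E)=E$, together with the $\sigma_F$-derivation $\delta_F$ of $B$ extending $\delta_F(K)=0$, $\delta_F(E)=-(q-q^{-1})^{-1}(K-K^{-1})$. Standard skew polynomial theory then gives both that $\{F^t K^s E^r\}$ is a basis, proving (1), and, since each step adjoins an indeterminate to a domain via an automorphism, that $U_q$ has no zero divisors. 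The grading in (2) is immediate from homogeneity of each defining relation under $\deg E=1$, $\deg F=-1$, $\deg K^{\pm 1}=0$: indeed $[E,F]$ and $(K-K^{-1})/(q-q^{-1})$ are both of degree zero.

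The main obstacle is verifying that $(\sigma_F,\delta_F)$ actually extends consistently to an Ore extension of $B$, i.e.\ that $\delta_F$ is a bona fide $\sigma_F$-derivation. This reduces to checking the twisted Leibniz rule $\delta_F(xy)=\sigma_F(x)\delta_F(y)+\delta_F(x)y$ on the single defining relation $EK-q^{-2}KE=0$ of $B$. A direct computation using the prescribed values gives $\delta_F(EK)=\delta_F(E)K=-(q-q^{-1})^{-1}(K^2-1)$ and $\delta_F(q^{-2}KE)=q^{-2}\sigma_F(K)\delta_F(E)=K\delta_F(E)=-(q-q^{-1})^{-1}(K^2-1)$, which agree; so $\delta_F$ descends to $B$. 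Once this is checked, the rest is bookkeeping.
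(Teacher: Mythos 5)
The paper cites Jantzen [Th.~1.5--1.8] and Kassel [Th.~VI.1.4] rather than proving this itself; your iterated Ore-extension argument is the standard route and is essentially the one in Kassel, and it is correct. Two small bookkeeping points deserve to be made explicit. First, the rewriting rule for $E$ and $K$ should read $EK \rightsquigarrow q^{-2}KE$: as written, $KE\rightsquigarrow q^{2}EK$ pushes $E$ to the \emph{left} of $K$, which would undo rather than produce the normal form $F^tK^sE^r$. Second, the Ore extension $B[F;\sigma_F,\delta_F]$ built on $B=A[E;\sigma_E]$ naturally yields the basis $\{K^sE^rF^t\}$, with the last-adjoined variable on the right; to pass to the stated ordering $\{F^tK^sE^r\}$ one should either adjoin $F$ before $E$ (with $\sigma(K)=q^2K$ and then a $\sigma'$-derivation $\delta'(F)=(K-K^{-1})/(q-q^{-1})$ when adjoining $E$), or give a brief triangular change-of-basis argument using the $(r-t)$-grading together with the filtration by total degree in $E$ and $F$. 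Neither point affects the correctness of the strategy, and the verification of the twisted Leibniz rule on the relation $EK=q^{-2}KE$ is exactly right.
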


Note that the relations \eqref{nonres} are homogeneous of degree $1$, $-1$ and $0$, respectively. Products of monomials can be written in the basis $\{F^tK^sE^r\}_{r,t\in\mn,s\in \mz}$ by using the two first commutation relations in \eqref{nonres}, together with 
\begin{equation}\label{comEF3}
E^rF^s = \sum_{i=0}^{{\rm min}(r,s)}\ F^{s-i}h_i E^{r-i},\quad r,s\in \mn,
\end{equation}
where the $h_i$ are Laurent polynomials  in $\mc[K,K^{-1}]$ given by  
$$h_i = \left[\!\!\!\begin{array}{c} r\\ i \end{array}\!\!\!\right]\left[\!\!\!\begin{array}{c} s\\ i \end{array}\!\!\!\right] [i]!\ \prod_{j=1}^i \left[K;i+j-(r+s)\right].$$
Here we assume that the product is $1$ for $i=0$, and we put 
\begin{equation}\label{qcom}
\left[K;l\right] = \frac{Kq^{l}-K^{-1}q^{-l}}{q-q^{-1}}.
\end{equation}
We use also the standard notations for $q$-integers, $q$-factorials, and $q$-binomial coefficients 
\begin{equation}\label{qbinomialform} \left[l\right] = \frac{q^l - q^{-l}}{q-q^{-1}}\ ,\ \left[l\right]! = \left[l\right]\left[l-1\right]\ldots \left[1\right]\ ,\ \left[\!\!\!\begin{array}{c} l\\ m \end{array}\!\!\!\right] = \frac{\left[l\right]!}{\left[m\right]!\left[l-m\right]!},\quad l\in \mz,\end{equation}
with $\left[0\right]!=1$ by convention. Note that $[l] = q^{l-1}+\ldots+q^{1-l}$, so $[l] \in \mz[q,q^{-1}]$. Also,
\begin{equation}\label{divide} \left[\!\!\!\begin{array}{c} l\\ 0 \end{array}\!\!\!\right] = \left[\!\!\!\begin{array}{c} l\\ l \end{array}\!\!\!\right] = 1\quad \mbox{and}\quad [l] \ \mbox{divides}\ \left[\!\!\!\begin{array}{c} l\\ m \end{array}\!\!\!\right] \ \mbox{if}\  1<m<l \ \mbox{and}\ l \ \mbox{is odd}.\end{equation}
The elements \eqref{qcom} appear in all computations involving commutators. In the sequel we will use \eqref{comEF3} when $r=1$ and $s$ is arbitrary, or $r$ is arbitrary and $s=1$:
\begin{equation}\label{comEFutile}
EF^s = F^sE + [s]F^{s-1}[K;1-s]\ ,\  FE^r = E^rF - [r]E^{r-1}[K;r-1].
\end{equation}
Another important identity is the \textit{$q$-binomial formula}, which holds for any $u$, $v$ such that $vu = q^{2}uv$ \cite[Ch. VI (1.9)]{K}:
\begin{equation}\label{qbinomial}
 (u+v)^r = \sum_{j=0}^r\left[\!\!\!\begin{array}{c}  r\\ j \end{array}\!\!\!\right]q^{j(r-j)}u^jv^{r-j}.
\end{equation}
 In particular, it implies that $\left[\!\!\!\begin{array}{c} l\\ m \end{array}\!\!\!\right] \in \mz[q,q^{-1}]$.
\begin{xca} \label{Cartan2} (a) Prove \eqref{comEFutile} by induction, using the relations \eqref{nonres}. 

\noindent (b) Show that the second relation in \eqref{comEFutile} follows from the first one by applying the \textit{Cartan automorphism} of the algebra $\Uq$, defined by \begin{equation}\label{Cartan} \omega(E)=F\ ,\  \omega(F)=E\ ,\ w(K)= K^{-1}.\end{equation} 
  \end{xca}
\subsection{Center} Theorem \ref{PBWteo} gives a lot of informations on  the center $Z_q$ of $\Uq$. Indeed, denote by $U_d$ the degree $d$ piece of $\Uq$. For any $u\in U_d$ we have \begin{equation}\label{adKrel} KuK^{-1} = q^{2d}u.\end{equation}
Hence, when $q$ is not a root of unity, $U_d$ is the eigenspace of ad$_K$ with eigenvalue $q^{2d}$. In particular, $Z_q\subset U_0$.

When $q=\e$ is a root of unity of odd order $n\geq 3$, $Z_\e$ is much bigger. Let us show that it is generated by $E^n$, $F^n$ and $Z_\e\cap U_0$. Since the relations \eqref{nonres} are homogeneous, the homogeneous parts of a central element are central. Hence it is enough to prove our claim for a central element $u\in U_d$. By \eqref{adKrel}, if $u \in U_d$ is central then $n$ divides $d$. Now, if $d=ln$ for some $l\in \mz\setminus \{0\}$ with $l>0$ (resp. $l<0$), $U_d$ is spanned by the monomials $F^tK^sE^{t+ln}$ (resp. $F^{t+ln}K^sE^t$). Hence $u = u'E^{ln}$ (resp. $u=F^{ln}u'$) for some $u'\in U_0$. The relations 
\begin{equation}\label{com0} \begin{array}{c} K^lE=q^{2l}EK^l\ ,\ K^lF=q^{-2l}FK^l\\ KE^l = q^{2l}E^lK\ ,\ KF^l = q^{-2l}F^lK\end{array},\quad l\in \mn\end{equation} 
imply that $K^{\pm n}\subset Z_\e$. Together with $\left[n\right]=0$ and \eqref{comEFutile} for $s=n$ and $r=n$, they give also $E^n$, $F^n\subset Z_\e$.  Since $\Ue$ has no zero divisors, we get that any $u\in U_d \cap Z_\e$ can be decomposed as a product of central elements $E^{ln}$ and $F^{ln}$ with a central element in $U_0$.
\begin{definition} The subalgebra $Z_0$ of the center $Z_\e$ of $\Ue$ is generated by $E^n$, $F^n$ and $K^{\pm n}$.\end{definition} The algebra $Z_0$ plays a key role in the structure of $U_\e sl_2$. In particular, the next theorem shows that any element of $Z_\e$ is a root of a monic polynomial with coefficients in $Z_0$. 
\begin{theorem}\label{finite} 1) The algebra $\Ue$ is a free $Z_0$-module of rank $n^3$, with basis the monomials $F^tK^{s}E^r$, $0\leq t,s,r\leq n-1$. 

2) The center $Z_\e$ of $U_\e$ is a finitely generated algebra, and is integrally closed over $Z_0$.  
\end{theorem}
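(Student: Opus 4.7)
The plan is to exploit the PBW theorem together with the centrality of $E^n$, $F^n$, $K^{\pm n}$ established in the paragraph preceding the definition of $Z_0$.

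For part (1), I would start by processing a general PBW monomial via Euclidean division by $n$. Writing $t = an+t_0$, $s = bn+s_0$, $r = cn+r_0$ with $0 \leq t_0, s_0, r_0 \leq n-1$ and $a,c\in\mathbb{N}$, $b\in\mathbb{Z}$, the fact that $F^n, K^{\pm n}, E^n$ lie in the center allows them to slide freely, so
$$F^tK^sE^r = (F^n)^a (K^n)^b (E^n)^c \cdot F^{t_0}K^{s_0}E^{r_0}.$$
This immediately shows that the $n^3$ monomials $F^{t_0}K^{s_0}E^{r_0}$, $0 \leq t_0, s_0, r_0 \leq n-1$, generate $\Ue$ as a $Z_0$-module. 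For freeness, I would first note that $E^n$, $F^n$, $K^{\pm n}$ are algebraically independent: any nontrivial polynomial relation among them, once expanded, would produce a nontrivial $\mathbb{C}$-linear relation among distinct PBW monomials of $\Ue$, contradicting Theorem \ref{PBWteo}.1. Consequently $Z_0 \cong \mathbb{C}[F^n, E^n, K^n, K^{-n}]$. A hypothetical $Z_0$-linear dependence $\sum z_{t_0,s_0,r_0} F^{t_0}K^{s_0}E^{r_0} = 0$ then expands, via the same commutation, into a $\mathbb{C}$-linear combination of PBW monomials indexed by the distinct triples $(an+t_0, bn+s_0, cn+r_0)$; Theorem \ref{PBWteo}.1 forces all coefficients to vanish, proving freeness.

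For part (2), finite generation follows by a Noetherian chain: $Z_0$ is a finitely generated commutative $\mathbb{C}$-algebra, hence Noetherian by Hilbert's basis theorem; $\Ue$ is a finite $Z_0$-module by (1), hence a Noetherian $Z_0$-module; the $Z_0$-submodule $Z_\e \subset \Ue$ is therefore finitely generated over $Z_0$, and a fortiori finitely generated as a $\mathbb{C}$-algebra. Integrality of each $z \in Z_\e$ over $Z_0$ follows from the finiteness of $\Ue$ over $Z_0$ by the standard determinant-trick Cayley--Hamilton applied to multiplication-by-$z$ on $\Ue$.

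The main obstacle is the integral closure statement proper, i.e.\ that $Z_\e$ equals its integral closure in $\mathrm{Frac}(Z_\e)$. My approach would be to give an explicit description of $Z_\e$: introduce the quantum Casimir
$$C \;=\; FE + \frac{qK + q^{-1}K^{-1}}{(q-q^{-1})^2},$$
check it is central, and establish a monic degree-$n$ relation $P(C)=0$ over $Z_0$ (a Chebyshev-type identity expressing $C^n$ in terms of $E^n, F^n, K^{\pm n}$ modulo lower-order terms in $C$). Combined with part (1), a dimension count over the generic point of $\mathrm{Spec}(Z_0)$ shows that $Z_\e = Z_0[C]/(P(C))$. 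Since $Z_0 \cong \mathbb{C}[F^n, E^n, K^{\pm n}]$ is a regular UFD, normality of $Z_\e$ then reduces to Serre's criterion $(R_1)+(S_2)$, and this in turn to checking that the discriminant of $P$ over $Z_0$ is squarefree, so that the finite cover $\mathrm{Spec}(Z_\e) \to \mathrm{Spec}(Z_0)$ is generically étale with codimension-one branch locus along a reduced divisor. The discriminant computation is the delicate point, and is where I would expect to invoke (or re-derive) the explicit form of $P$ from the Chebyshev identity.
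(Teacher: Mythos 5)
Your argument for part (1) is correct and fleshes out, in detail, exactly what the paper means by ``the first claim follows directly from Theorem~\ref{PBWteo}.'' Your first paragraph of part (2) --- $Z_0$ is Noetherian, $\Ue$ is a finite hence Noetherian $Z_0$-module, so $Z_\e$ is a finitely generated $Z_0$-module and therefore a finitely generated $\mc$-algebra, and every $z\in Z_\e$ is integral over $Z_0$ by the Cayley--Hamilton determinant trick --- is also correct and is precisely the paper's own argument.

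The problem lies in your reading of the phrase ``integrally closed over $Z_0$.'' In this paper the phrase means that $Z_\e$ is an \emph{integral extension of} $Z_0$: every element of $Z_\e$ satisfies a monic polynomial with coefficients in $Z_0$. This is unambiguous from the way the conclusion is used: right after introducing the restriction map $\tau$ in \eqref{tau}, the theorem is invoked to conclude that $\tau\colon\mathrm{Spec}(Z_\e)\to\mathrm{Spec}(Z_0)$ is a \emph{finite}, hence surjective, morphism --- which is exactly what integral dependence together with finite generation deliver. Thus the single line where you apply Cayley--Hamilton to multiplication by $z$ on $\Ue$ is the entire content of the claim, and you are already done at that point. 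Your final paragraph, aiming instead at \emph{normality} of $Z_\e$ (integral closure in $\mathrm{Frac}(Z_\e)$), addresses a genuinely stronger property that is not what the theorem asserts. Beyond being superfluous, it would be a premature detour: the presentation $Z_\e = Z_0[\Omega]/(P)$ you invoke is Theorem~\ref{amainteo}(2), which the paper establishes only later, via Proposition~\ref{generators} and the proof of Theorem~\ref{moduleteo1}; and the discriminant-squarefree test is more than is needed anyway, since once the hypersurface equation \eqref{ext0} is in hand one sees the singular locus $\Dd$ is a finite set of points in a $3$-fold, so Serre's condition $(R_1)$ holds trivially without any branch-locus analysis.
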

\begin{proof} The first claim follows directly from Theorem \ref{PBWteo} (note in particular that $E^n$, $F^n$ and $K^{\pm n}$ are independent). It implies that $\Ue$ is a noetherian $Z_0$-module, and so the $Z_0$-submodule $Z_\e$ is finitely generated. Hence $Z_\e$ is integrally closed over $Z_0$ \cite[Th. 5.3]{AMcD}. Then, by Hilbert's basis theorem $Z_\e$ is finitely generated as an algebra.
\end{proof}
In another direction, we will see in the next section that $Z_0$ is the basic link between $\Ue$ and the Lie group $PSL_2\mc$. This depends on the fact that it is a (commutative) \textit{Hopf} algebra. To prove this, let us introduce for future reference a prefered set $\{x,y,z\}$ of generators of $Z_0$, given by
\begin{equation}\label{pref}
x= -(\e-\e^{-1})^{n}E^nK^{-n}\ , \ y=(\e-\e^{-1})^{n}F^n\ ,\ z= K^n.
\end{equation}  
We also put
\begin{equation}\label{prefbis} 
e= (\e-\e^{-1})^{n}E^n\ ,\ f = - (\e-\e^{-1})^{n}F^nK^n.\end{equation}
Note that
$$x= T(y)\ ,\ f = T(e),\ x=S(e)\ ,\ y = S(f)$$
where $S$ is the antipode of $\Ue$, and $T$ the \textit{braid group automorphism} of the algebra $\Ue$, defined by
\begin{equation}\label{braid}
T(E)=-FK\ ,\ T(F) = -K^{-1}E\ ,\ T(K)=K^{-1}\ ,\ T(K^{-1})=K.
\end{equation}
\begin{lemma}\label{HopfZ0} The subalgebra $Z_0$ of $\Ue$ is a \textit{Hopf} subalgebra.
\end{lemma}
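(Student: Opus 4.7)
My plan is to check directly that the Hopf structure maps $\Delta$, $S$ and $\eta$ preserve $Z_0$, by evaluating them on the algebra generators $E^n$, $F^n$, $K^{\pm n}$. The grouplike case is trivial: $\Delta(K^{\pm n}) = K^{\pm n}\otimes K^{\pm n}$ follows from $\Delta(K)=K\otimes K$ and the commutativity of the two tensor factors, while $S(K^{\pm n})=K^{\mp n}$ and $\eta(K^{\pm n})=1$ are immediate from \eqref{nonres2}. So the real content is in the coproducts of $E^n$ and $F^n$.

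The key step is the $q$-binomial formula \eqref{qbinomial}. Setting $u = E\otimes 1$ and $v = K\otimes E$ in $\Ue\otimes \Ue$, the relation $KE=q^2EK$ gives $vu=q^2uv$, whence
$$\Delta(E)^n = (u+v)^n = \sum_{j=0}^n \left[\!\!\!\begin{array}{c} n\\ j \end{array}\!\!\!\right] q^{j(n-j)}\, u^j v^{n-j}.$$
At $q=\e$ every intermediate coefficient vanishes: for $j=1$, $\left[\!\!\!\begin{array}{c} n\\ 1 \end{array}\!\!\!\right] = [n]_\e = 0$ because $\e^n=1$, while for $1<j<n$ the coefficient is divisible by $[n]$ by \eqref{divide}, since $n$ is odd. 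Only the endpoint terms survive and
$$\Delta(E^n) = E^n \otimes 1 + K^n \otimes E^n \in Z_0 \otimes Z_0.$$
An entirely parallel calculation, taking $u = 1\otimes F$ and $v = F\otimes K^{-1}$ (the relation $vu=q^2uv$ coming from $K^{-1}F=q^2FK^{-1}$), will yield $\Delta(F^n) = 1\otimes F^n + F^n\otimes K^{-n}\in Z_0\otimes Z_0$.

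The antipode is handled by the same kind of direct computation. Since $S$ is an anti-homomorphism, $S(E^n) = (-K^{-1}E)^n$, and iterating $EK^{-1}=\e^{-2}K^{-1}E$ telescopes this to $(-1)^n\e^{-n(n-1)} K^{-n}E^n = -K^{-n}E^n$, using $\e^n=1$ and the oddness of $n$; the result lies in $Z_0$ (and coincides with $-E^n K^{-n}$, since $\e^{2n}=1$ forces $K^{\pm n}$ to commute with $E$). The same procedure gives $S(F^n)=-F^n K^n\in Z_0$, and the counit sends each generator into $\mc$. The main obstacle is really just the collapse of the middle $q$-binomial coefficients at $\e$; once that is in hand, the Hopf-subalgebra property follows by inspection.
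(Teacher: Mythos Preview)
Your proof is correct and follows exactly the approach sketched in the paper: the $q$-binomial formula \eqref{qbinomial} together with the divisibility \eqref{divide} and $[n]_\e=0$ collapses the coproducts of $E^n$ and $F^n$ to the two extreme terms, and the antipode and counit are handled by direct computation. One small slip: from $KE=\e^2EK$ one gets $EK^{-1}=\e^{2}K^{-1}E$, not $\e^{-2}$, but this is immaterial since $\e^{\pm n(n-1)}=1$ either way.
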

\begin{proof} We have to show that $\Delta(Z_0)\subset Z_0\otimes Z_0$ and $S(Z_0) \subset Z_0$. Because these maps are morphisms of algebras, it is enough to prove this on the generators $x$, $y$ and $z$ of $Z_0$. We claim that
\begin{equation}\label{nonres3}\begin{array}{c} \Delta(z) = z \otimes z\\ \Delta(x) = x \otimes z^{-1} + 1\otimes x,\ \Delta(y) = 1\otimes y + y \otimes z^{-1}\\S(z) = z^{-1},\ S(x) = -zx,\ S(y) = -yz\\
\eta(z)=1,\ \eta(x) = \eta(y) = 0.\end{array}\end{equation}
The proof of the formulas for $S$ is immediate. As for $\Delta$, they are consequences of \eqref{divide}, the $q$-binomial identity \eqref{qbinomial}, and the fact that $[n]=0$ when $q=\e$.
\end{proof}
So far, we have only considered in \eqref{adKrel} the invariance of the center of $\Uq$ under conjugation by $K$. In order to describe the effect of the conjugation action by $E$ and $F$, let us introduce the automorphisms $\gamma_l$ of $\mc[K,K^{-1}]$ given by \begin{equation}\label{autl} \gamma_l(K) = q^{l}K,\quad  l\in \mz.                                                           \end{equation}
By \eqref{nonres}, for all $h\in \mc[K,K^{-1}]$ we have $$hE=E\gamma_2(h) \quad \mbox{and}\quad hF=F\gamma_{-2}(h).$$ It is easy to check that $u=\sum_{i\geq 0} F^ih_iE^i \in U_0$ satisfies $uE=Eu$ and $uF=Fu$ in $\Uq$ if and only if for all $i$,
\begin{equation}\label{rec} h_i-\gamma_{-2}(h_i) = [i+1][K;-i]h_{i+1}.
\end{equation}
Since $\mc[K,K^{-1}]$ is an integral domain, $h_{i+1}$ is thus determined inductively by $h_0$ if $[j+1]\ne 0$ for all $0\leq j\leq i$. This condition is satisfied for all $i$ when $q$ is not a root of unity, and for $i<n-1$ when $q=\e$. Hence the projection map 
\begin{equation}\label{HCdef} \fonc{\pi}{U_0}{\mc[K,K^{-1}]}{\sum_{r=0}^{\infty}F^rh_rE^r}{h_0}
\end{equation}
is injective over $Z_q\subset U_0$ when $q$ is not a root of unity, and injective over $Z_\e \cap U_0'$ when $q=\e$, where
\begin{equation}\label{resset} U_0' = \left\lbrace \sum_{i=0}^{n-1} F^ih_iE^i\ |\ h_0,\ldots, h_{n-1}\in \mc[K,K^{-1}]\right\rbrace \subset U_0.
\end{equation}
In fact, Ker$(\pi) = FU_0E$ is a two-sided ideal in $U_0$, so $\pi$ is a homomorphism of algebras, the \textit{Harish-Chandra homomorphism}. It is easy to check that $Z_\e$ is generated by $F^n$, $E^n$ and $Z_\e \cap U_0'$. On another hand, \eqref{rec} is solved by  
$$h_0 = \frac{qK+q^{-1}K^{-1}}{(q-q^{-1})^2}\ ,\ h_1=1\ , \ h_r =0\quad \mbox{if} \ r\geq 2,$$
which defines the \textit{Casimir element}
\begin{equation}\label{Casimir} \Omega = FE + \frac{qK+q^{-1}K^{-1}}{(q-q^{-1})^{2}}\ \in Z_q.\end{equation}
Note that (see \eqref{autl})
$$\gamma_{-1}\circ \pi(\Omega) = \frac{K+K^{-1}}{(q-q^{-1})^2}.$$ 
We will see in Section \ref{MODULES} that $Z_q$ is actually generated by $\Omega$, and $Z_\e\cap U_0'$ by $\Omega$ and $K^{\pm n}$ (see Theorem \ref{moduleteo1}; for an alternative approach based on induction on $i$ in \eqref{rec}, see \cite[Prop. 2.20]{J}). More precisely:
\begin{theorem} \label{amainteo} (1) If $q$ is not a root of unity, $Z_q$ is a polynomial algebra over $\mc$ generated by $\Omega$, and $\gamma_{-1}\circ \pi:Z_q \ra \mc[K+K^{-1}]$ is an isomorphism.

(2) If $q=\e$ is a root of unity of odd order $n\geq 3$, then $Z_\e$ is the algebra over $\mc$ generated by $E^n$, $F^n$, $K^{\pm n}$ and $\Omega$ with relation
\begin{equation}\label{ext0}
 \prod_{j=0}^{n-1}\left( \Omega - c_j\right) = E^nF^n+\frac{K^n+K^{-n}-2}{(\e-\e^{-1})^{2n}}
\end{equation}
where $$c_j =\frac{\e^{j+1}+\e^{-j-1}}{(\e-\e^{-1})^{2}}.$$
\end{theorem}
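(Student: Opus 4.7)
The plan is to deduce both parts from the generation results of Theorem~\ref{moduleteo1} (cited just above) combined with the injectivity of the Harish--Chandra homomorphism $\pi$ established in the text.

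For part~(1), once $Z_q = \mc[\Omega]$ is granted, the injectivity of $\gamma_{-1}\circ\pi$ on $Z_q$ together with the identity $\gamma_{-1}\circ\pi(\Omega) = (K+K^{-1})/(q-q^{-1})^2$ immediately gives a $\mc$-algebra isomorphism $\mc[\Omega] \to \mc[K+K^{-1}]$ carrying a generator to a generator, from which the polynomial structure of $Z_q$ in $\Omega$ follows.

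For part~(2), the generation of $Z_\e$ by $\Omega, E^n, F^n, K^{\pm n}$ is immediate from the earlier observation that $Z_\e$ is generated by $E^n, F^n$ and $Z_\e\cap U_0'$, combined with $Z_\e\cap U_0' = \mc[\Omega,K^{\pm n}]$. The key step is to verify the relation~\eqref{ext0} by applying $\pi$ to both sides and invoking injectivity of $\pi$ on $Z_\e\cap U_0'$. On the left, each factor $\pi(\Omega - c_j)$ is a Laurent polynomial with simple zeros at $K = \e^j$ and $K = \e^{-j-2}$, and thus factors as a quadratic in $K$. When $q = \e$, both $\{\e^j\}$ and $\{\e^{-j-2}\}$ (for $0\le j<n$) exhaust the $n$-th roots of unity, so $\prod_j(K-\e^j) = \prod_j(K-\e^{-j-2}) = K^n-1$, and the image of the LHS collapses to $(K^n+K^{-n}-2)/(\e-\e^{-1})^{2n}$. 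On the right, centrality of $E^n$ and $F^n$ in $\Ue$ gives $E^nF^n = F^nE^n \in FU_0E = \ker\pi$, so $\pi(E^nF^n) = 0$ and the image of the RHS is just $(K^n+K^{-n}-2)/(\e-\e^{-1})^{2n}$. The difference LHS$-$RHS is central, and since both sides produce the same top $F$-degree term $F^nE^n = E^nF^n$, this difference lies in $Z_\e\cap U_0'$ and is forced to vanish by injectivity of $\pi$ there.

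The last step is to rule out further relations: by Theorem~\ref{finite}, $\Ue$ is a free $Z_0$-module with PBW basis $\{F^tK^sE^r\}_{0\le t,s,r<n}$. Since each $\Omega^i$ has top PBW term $F^iE^i$ (with remaining terms of strictly smaller $F$-degree), the powers $1,\Omega,\ldots,\Omega^{n-1}$ are $Z_0$-linearly independent. Together with \eqref{ext0}, which expresses $\Omega^n$ as a $Z_0$-combination of lower powers, this identifies $Z_\e$ with the free $Z_0$-module $Z_0[\Omega]/(\eqref{ext0})$ of rank $n$, yielding the stated presentation. The main obstacle is the collapse of $\prod_j\pi(\Omega-c_j)$: it requires identifying the two zeros $\e^j, \e^{-j-2}$ of $\alpha_0(K)-c_j$ and exploiting the root-of-unity coincidence that makes both families sweep out all $n$-th roots at $q=\e$, which is also where the constant $-2$ in \eqref{ext0} arises, via $K^{-n}(K^n-1)^2 = K^n + K^{-n} - 2$.
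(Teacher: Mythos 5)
Your proof is correct, and for the relation \eqref{ext0}\,---\,the substantive content of part~(2)\,---\,it takes a genuinely different route from the paper. The paper (in the proof of Theorem~\ref{moduleteo1}~(iii)) first establishes by induction from \eqref{comEFutile} the operator identity $\prod_{j=0}^{r-1}\bigl(\Omega - (\e^{2j+1}K+\e^{-2j-1}K^{-1})/(\e-\e^{-1})^{2}\bigr)=F^rE^r$ with $K$-dependent factors, then for $r=n$ expands the product via elementary symmetric functions, shows using $\sum_i\e^{2i}=0$ that all coefficients except the top one are scalars, and finally pins down the scalars $c_j^\pm$ by evaluating both sides on the modules $V_r^\pm$. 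You instead verify \eqref{ext0} directly by applying the Harish--Chandra homomorphism $\pi$ to both sides: the product collapses because each $\pi(\Omega)-c_j$ factors as $\e K^{-1}(K-\e^j)(K-\e^{-j-2})/(\e-\e^{-1})^2$ and both index families sweep out the $n$-th roots of unity, while $\pi(E^nF^n)=0$; the observation that the $F^nE^n$ terms cancel in the difference (placing it in $Z_\e\cap U_0'$, where $\pi$ is injective) then finishes. Your route is shorter, avoids the symmetric-function computation, and makes the root-of-unity coincidence explicit; the paper's route yields the more general operator identity \eqref{Casimirrel} (useful in its own right) and determines the constants through the representation theory that the surrounding sections are developing, which is arguably more in the spirit of the exposition. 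One small caveat: you cite Theorem~\ref{moduleteo1} for the generation statement, but that theorem already contains \eqref{ext}; the generation you actually need is Proposition~\ref{generators}~(ii), and citing the latter keeps your argument from appearing circular (your HC verification is then a genuinely independent proof of the relation). For part~(1) the content is essentially that of Proposition~\ref{generators}~(i); your phrasing \textquotedblleft once $Z_q=\mc[\Omega]$ is granted\textquotedblright\ is fine so long as $\mc[\Omega]$ means the subalgebra generated by $\Omega$, with the polynomial (relation-free) structure then deduced from the isomorphism onto $\mc[K+K^{-1}]$, as you do.
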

Statement (2) is part of \cite[Th. 4.2]{DCK}. It shows that $Z_\e$ is a finite extension of $Z_0$ built on the image $\mc[\Omega]$ of the injective homomorphism
\begin{equation}\label{injfix}
(\gamma_{-1}\circ \pi)^{-1}\colon \mc[K+K^{-1}] \longrightarrow Z_\e.
\end{equation}
The domain $\mc[K+K^{-1}]$ is the fixed point set of $\mc[K,K^{-1}]$ under the involution $s(K^{\pm 1}) = K^{\mp 1}$, which can be identified with a generator of the Weyl group of $PSL_2\mc$. 
\section{The quantum coadjoint action for $\Ue$}\label{QCA}
Consider the set Spec$(Z_\e)$ of algebra homomorphisms from the center $Z_\e$ of $\Ue$ to $\mc$. An element of Spec$(Z_\e)$ is called a \textit{central character} of $\Ue$. The set Spec$(Z_\e)$ has a very rich Poisson geometry related to the adjoint action of $PSL_2\mc$, that we are going to describe. It will be used in Section \ref{MODULES}.
\smallskip

Since $Z_\e$ is finitely generated, by Hilbert's nullstellensatz Spec$(Z_\e)$ is an affine algebraic set. The inclusion $Z_0\subset Z_\e$ induces a regular (restriction) map 
\begin{equation}\label{tau} \tau\colon {\rm Spec}(Z_\e) \lra {\rm Spec}(Z_0).
\end{equation}
We have seen in Proposition \ref{finite} that $Z_\e$ is integrally closed over $Z_0$. Hence $\tau$ is \textit{finite}: any $y\in {\rm Im}(\tau)$ has a finite number of preimages. From this property it follows that $\tau$ is surjective \cite[\S 5.3]{Sh}.
\subsection{A Poisson-Lie group structure on Spec$(Z_0)$} \label{PLsection} We need some explicit formulas. Denote by $T$, $U_\pm$ and $PB_{\pm}$ the subgroups of $PSL_2\mc = SL_2\mc/(\pm 1)$ of diagonal, upper/lower unipotent and upper/lower triangular matrices up to sign, with their natural structures of affine algebraic groups induced by the adjoint action on $sl_2\mc$. The action
$$\left(\begin{array}{cc} a & b\\ c & d\end{array}\right)\colon w\mapsto \frac{aw+b}{cw+d},\ w\in \mathbb{P}^1$$
identifies $PSL_2\mc$ with the group ${\rm Aut}(\mathbb{P}^1)$ of automorphisms of the Riemann sphere, and $T$ with the subgroup of maps $w\mapsto zw$, $z\in \mc^*$. Consider the group 
$$H = \{ (tu_+,t^{-1}u_-)\ | t\in T, u_\pm \in U_\pm \} \subset PB_{+}\times PB_{-}.$$
By evaluating on the generators $x$, $y$ and $z$ of $Z_0$ (see \eqref{pref}) we get a map 
\begin{equation}\label{psiprime}\fonc{\psi'}{{\rm Spec}(Z_0)}{\mc^2\times \mc^*}{g}{(x_g,y_g,z_g).}\end{equation}
For all $g\in {\rm Spec}(Z_0)$, consider the automorphisms of $\mathbb{P}^1$ given in $\psi'$-coordinates by
$$\psi_1(g)\colon w \mapsto z_gw-x_gz_g\ ,\ \psi_2(g)\colon w\mapsto w/(-z_gy_gw+z_g),\quad w\in \mathbb{P}^1.$$ 
Under the above isomorphism of ${\rm Aut}(\mathbb{P}^1)$ with $PSL_2\mc$, they correspond respectively to
$$\pm \left(\begin{array}{cc} \sqrt{z_g} & -x_g\sqrt{z_g}\\ 0 & 1/\sqrt{z_g}\end{array}\right)\in PB_{+}\quad ,\quad \pm \left(\begin{array}{cc} 1/\sqrt{z_g} & 0\\ -y_g\sqrt{z_g} & \sqrt{z_g}\end{array}\right)\in PB_{-}$$
for any choice of square root $\sqrt{z_g}$. Define 
$$\fonc{\psi}{{\rm Spec}(Z_0)}{H}{g}{(\psi_1(g),\psi_2(g)).}$$
Clearly, $\psi$ is an isomorphism of algebraic varieties.  It gives $H$ a Poisson-Lie bracket as follows. By using the PBW basis $\{F^tK^sE^r\}_{r,t\in\mn,s\in \mz}$ we can identify $\Ue$ as a linear subspace of $U_q$, considered as a family of algebras over $\mc$ with varying parameter $q$. Hence, for any given element $u\in \Ue$ we can specify a {\it lift} $\tilde{u}\in \Uq$, such that $$u = \tilde{u}\ {\rm mod}(q^n-q^{-n})U_q.$$ 
We can write this as $u = \lim_{q\rightarrow \e} \tilde{u}$. Then, for all $a\in Z_0$ and $u\in U_\e$, put
$$D_a(u) = \lim_{q\rightarrow \e}\frac{\left[\tilde{a},\tilde u\right]}{n(q^n-q^{-n})}.$$
Note that
\begin{equation}\label{der0} D_a(u)= \frac{1}{(\e-\e^{-1})}\lim_{q\rightarrow \e}\left[\frac{\tilde{a}}{n[n]},\tilde u\right] = \frac{1}{(\e-\e^{-1})^n}\lim_{q\rightarrow \e}\left[\frac{\tilde{a}}{[n]!},\tilde u\right]\end{equation}
where we use the formula $[n-1]! = n(\e-\e^{-1})^{1-n}$.
\begin{proposition}\label{center} \textit{(i) The map $\psi \colon {\rm Spec}(Z_0) \ra H$ is an isomorphism of algebraic groups.}

\textit{(ii) The maps $D_a\colon \Ue \ra \Ue$ are well-defined derivations of $U_\e$ preserving $Z_0$ and $Z_\e$. At the generators $e=-xz$ and $z$ in \eqref{prefbis} we have:
\begin{equation}\label{derform1}
D_e(F) = \frac{(\e-\e^{-1})^{n-1}}{n}[K;1]E^{n-1}\ ,\ D_e(K^{\pm 1}) = \mp \frac{(\e-\e^{-1})^n}{n}K^{\pm 1}E^n\end{equation}
\begin{equation}\label{derform2} D_z(E) = \frac{1}{n}K^{n}E\ ,\  D_z(F) = -\frac{1}{n}FK^{n}.\end{equation}
where $[K;l]$ is given by \eqref{qcom} with $q=\e$.}
\end{proposition}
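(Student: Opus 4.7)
Since $\psi$ is already known to be an isomorphism of algebraic varieties, only compatibility of the group laws needs checking. The group structure on $\mathrm{Spec}(Z_0)$ is dual to the Hopf coproduct \eqref{nonres3}, so the product of characters is $(g_1 g_2)(a) = (g_1 \otimes g_2)\Delta(a)$. I would verify the homomorphism property directly on the generators $x,y,z$: $\Delta(z) = z\otimes z$ gives $z_{g_1g_2} = z_{g_1}z_{g_2}$, which matches the diagonal entry of the matrix product $\psi_1(g_1)\psi_1(g_2) \in PB_+$; $\Delta(x) = x\otimes z^{-1} + 1\otimes x$ gives $x_{g_1g_2} = x_{g_1}/z_{g_2} + x_{g_2}$, which is exactly what one reads off the upper-right entry of the same matrix product after dividing by $-\sqrt{z_{g_1}z_{g_2}}$. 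A symmetric computation handles $y$, and the antipode formulas in \eqref{nonres3} are immediately seen to match matrix inversion in $PB_\pm$.

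\textbf{Part (ii), well-definedness and Leibniz rule.} For any generator $a$ of $Z_0$ and any PBW monomial $u = F^t K^s E^r$, I would expand $[\tilde a,\tilde u] \in U_q$ using the commutation rules \eqref{comEFutile} and \eqref{com0}, observing that in every case a factor $[n]_q = (q^n - q^{-n})/(q-q^{-1})$ is produced. The prototypical case is $[\tilde E^n,\tilde F] = [n]\,E^{n-1}[K;n-1]$ from \eqref{comEFutile} with $r=n$; all other cases reduce to this one via \eqref{com0} and the Cartan automorphism \eqref{Cartan}. Dividing by $n(q^n-q^{-n})$ therefore yields an element of $U_q$ with a well-defined value at $q=\e$, which is $D_a(u)$. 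Lift-independence is automatic: two lifts of $u$ differ by an element $(q^n-q^{-n})\tilde w$, so the correction to $D_a(u)$ equals $[a,w|_{q=\e}]/n$, which vanishes because $a \in Z_0 \subset Z_\e$. The derivation property then follows from the Leibniz rule for commutators applied to $\tilde a,\tilde u,\tilde v$ after division by $n(q^n-q^{-n})$ and passage to the limit.

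\textbf{Preservation of centers and explicit formulas.} To show $D_a(Z_\e) \subset Z_\e$, I would apply the Jacobi identity: for $v \in U_\e$ and $b \in Z_\e$,
\[
[\tilde v,[\tilde a,\tilde b]] = [[\tilde v,\tilde a],\tilde b] + [\tilde a,[\tilde v,\tilde b]].
\]
Both inner commutators $[\tilde v,\tilde a]$ and $[\tilde v,\tilde b]$ lie in $(q^n-q^{-n})U_q$ (by the same divisibility argument as above, since $a,b$ become central at $\e$), so each of the two terms on the right is divisible by $(q^n-q^{-n})^2$; dividing by $n(q^n-q^{-n})$ and taking $q \to \e$ gives $[v,D_a(b)] = 0$. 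For $Z_0$ one verifies the containment on the generators $e,f,z^{\pm1}$ via direct computation of the commutators $[E^n,F^n]$, $[E^n,K^n]$, etc. The explicit formulas \eqref{derform1}--\eqref{derform2} then come from the prototypical calculation: it gives $D_e(F) = \frac{(\e-\e^{-1})^{n-1}}{n}E^{n-1}[K;n-1]_\e$, which is rewritten as $\frac{(\e-\e^{-1})^{n-1}}{n}[K;1]E^{n-1}$ by using $\e^n=1$ together with $K^{\pm1}E^{n-1} = \e^{\mp2}E^{n-1}K^{\pm1}$; the remaining formulas are obtained analogously.

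\textbf{Main obstacle.} The technically most delicate point is establishing that $[\tilde a,\tilde u]$ is genuinely divisible by $q^n - q^{-n}$ in $U_q$ (not merely vanishing at $q=\e$), uniformly over a PBW basis for $u$. This divisibility underpins both the well-definedness of $D_a$ and the Jacobi-based argument for preservation of $Z_\e$, and it ultimately rests on the $q$-binomial formula \eqref{qbinomial} combined with the divisibility property \eqref{divide} of $q$-binomial coefficients at odd $l=n$.
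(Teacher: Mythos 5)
Your argument is correct and follows essentially the same route as the paper's: for (i), the group law on ${\rm Spec}(Z_0)$ dual to \eqref{nonres3} is compared against matrix multiplication in $PB_\pm$ on the generators $x,y,z$; for (ii), well-definedness comes from the PBW expansion of $[\tilde a,\tilde u]$ and the vanishing of $[n]$ at $\e$, derivation from the Leibniz rule for commutators, and the explicit formulas from \eqref{comEFutile} and \eqref{com0}, with the rewrite $E^{n-1}[K;n-1]_\e = [K;1]_\e E^{n-1}$ exactly as you perform it. Your Jacobi-identity argument for $D_a(Z_\e)\subset Z_\e$ (using that both nested commutators gain an extra order of vanishing at $q=\e$) is a clean way to justify a point the paper dispatches with the word ``clearly,'' but it does not change the overall strategy.
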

\begin{proof} (i) This follows from classical duality arguments in Hopf algebra theory (see \cite[$\S$ 9]{Mo}). Because $Z_0$ is a commutative Hopf algebra (Lemma \ref{HopfZ0}), the algebraic set Spec$(Z_0)$ has a canonical group structure identifying $Z_0$ with the algebra of regular functions on Spec$(Z_0)$. It is defined dually by stipulating that for any $u\in Z_0$ and $g$, $h\in {\rm Spec}(Z_0)$,
\begin{equation}\label{group0} u(gh) = \mu\circ \Delta(u)(f,g)\ ,\ u(g^{-1}) = S(u)(g)\ ,\ u(\1) = \eta(u).
 \end{equation}
Here, $\1\in {\rm Spec}(Z_0)$ is the identity and $\mu$ the product of $Z_0$. The associativity of the product of Spec$(Z_0)$ follows from the coassociativity of $\Delta$ (see \eqref{coass}). The inverse is well-defined by $u(\1) = \eta(u)$ and the computation
$$u(gg^{-1}) = \mu\circ ({\rm id}\otimes S)\circ \Delta(u)(g,g) = \eta(u)1(g) = \eta(u),$$ 
where $1\in Z_0$ is identified with the constant function on Spec$(Z_0)$ with value $1$, and we use \eqref{antipode} in the second equality. By \eqref{nonres3} and \eqref{group0} we get in the $\psi'$-coordinates $(x_g,y_g,z_g)$ of the point $g$ of Spec$(Z_0)$:
\begin{equation}\label{group}
\begin{array}{c}
x_{gh} = x_h+x_gz_h^{-1}\ ,\ y_{gh} = y_h+y_gz_h^{-1}\ ,\ z_{gh} = z_gz_h\\
x_{g^{-1}} =-z_gx_g\ ,\ y_{g^{-1}} = -y_gz_g\ ,\ z_{g^{-1}} = z_g^{-1}\\
x_\1 = 0\ ,\ y_\1 = 0\ ,\ z_\1 = 1. 
\end{array}
\end{equation}
Now we have\begin{align}
\psi_1(g)\psi_1(h)(w) = & z_g(z_hw-z_hx_h)-z_gx_g \notag \\
= & z_gz_hw -z_gz_h(x_h+x_gz_h^{-1}),\notag
\end{align}
which is equal to $\psi_1(gh)$ by \eqref{group}. A similar computation gives $\psi_2(g)\psi_2(h)(w) = \psi_2(gh)$. Hence $\psi$ is a group homomorphism.

\smallskip

(ii) The maps $D_a$ are well-defined because $a\in Z_\e$ implies $[\tilde{a},\tilde{u}] \in (q^n-q^{-n})U_q$. They are derivations of $\Ue$ because the commutator $[\ ,\ ]$ of $U_q$ satisfies the Leibniz rule. Clearly, they map $Z_\e$ to $Z_\e$, and also $Z_0$ to $Z_0$ since for all $u\in Z_0$, $D_a(u)$ is a sum of monomials in $Z_0$. 

All this can also be check by direct computation, as follows. By \eqref{comEF3}, the products of basis elements of $\Uq$ read as
$$(F^tK^sE^r)(F^cK^bE^a) = \sum_{i=0}^{{\rm min}(r,c)}q^{-2((r-i)b+s(c-i))} F^{c+t-i}h_i K^{s+b}E^{r+a-i}.$$
Suppose that $r$, $s$ and $t$ are (possibly $0$) multiples of $n$, say $r=r'n$, $s=s'n$ and $t=t'n$. Then, all the $h_i$ except $h_0=1$ are divided by $[ln]$ for some $l$, and so vanish at $q=\e$. Since the coefficient of $F^{c+t}K^{s+b}E^{r+a}$ in the commutator $[F^{t}K^{s}E^{r},F^cK^bE^a]$ is 
$$q^{-2n(r'b+s'c)} - q^{-2n(s'a+bt')} =  
 q^{-n(b(r'+t')+s'(a+c)} [n(s'(a-c)+b(t'-r'))],$$
and $\lim_{q\ra \e} [nl]/[n] = l$ for any integer $l$, the limit \eqref{der0} is well-defined. 

The formula for $D_e(F)$ follows from a straightforward computation using \eqref{comEFutile} and \eqref{com0}. Also we have 
\begin{align}\label{eqconv}
D_e(K^{\pm 1}) = & \lim_{q\ra \e}\frac{[(q-q^{-1})^nE^n,K^{\pm 1}]}{n(q^n-q^{-n})}\notag\\
       = & \lim_{q\ra \e}\frac{(q-q^{-1})^n}{n}\ \frac{q^{\mp 2n}-1}{q^n-q^{-n}}K^{\pm 1}E^n\notag\\
       = & \mp \frac{(\e-\e^{-1})^n}{n}K^{\pm 1}E^n.\notag
  \end{align}
We get $D_z$ by similar computations. 
\end{proof}
For all $u$, $v\in Z_0$ and $f\in {\rm Spec}(Z_0)$, put
$$\{u,v\}(f) = D_u(v)(f).$$
By Proposition \ref{center}, for all $a\in Z_0$, $D_a\colon Z_0\ra Z_0$ defines an algebraic vector field on Spec$(Z_0)$, and $\{\ ,\ \}$ is a bivector satisfying the Jacobi identity. Hence:
\begin{corollary} \label{PoissonH} \textit {The bivector $\{\ ,\ \}$ is a Poisson bracket on Spec$(Z_0)$.}
\end{corollary}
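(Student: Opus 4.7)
The plan is to verify the three defining properties of a Poisson bracket --- antisymmetry, the Leibniz rule, and the Jacobi identity --- using the commutator structure in $U_q$ and a careful limit at $q = \e$. Proposition \ref{center}(ii) already guarantees that $\{a,b\} = D_a(b)$ takes values in $Z_0$, so only the algebraic identities remain to be checked.

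Antisymmetry $\{a,b\} = -\{b,a\}$ is immediate from $[\tilde a,\tilde b] = -[\tilde b,\tilde a]$ in $U_q$ and the $\mc$-linearity of the limit, while the Leibniz rule $\{a,bc\} = \{a,b\}c + b\{a,c\}$ is the derivation property of $D_a$ already recorded in Proposition \ref{center}(ii). The bulk of the work is the Jacobi identity, which I would derive from the associative Jacobi identity in $U_q$,
\[ [\tilde a,[\tilde b,\tilde c]] + [\tilde b,[\tilde c,\tilde a]] + [\tilde c,[\tilde a,\tilde b]] = 0, \]
by dividing through by $n^2(q^n - q^{-n})^2$. Since $a,b,c \in Z_0$ become central in $U_\e$ at $q = \e$, each inner bracket $[\tilde b,\tilde c]$ lies in $(q^n-q^{-n})U_q$, so each double bracket lies in $(q^n-q^{-n})^2 U_q$ and the quotient extends regularly to $q = \e$.

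To identify this limit with $\{a,\{b,c\}\}$, choose a lift $\widetilde{D_b(c)} \in U_q$ of $D_b(c) \in Z_0$ and expand
\[ [\tilde b,\tilde c] = n(q^n-q^{-n})\,\widetilde{D_b(c)} + (q^n-q^{-n})^2 r_{bc} \]
for some remainder $r_{bc} \in U_q$. Then
\[ \frac{[\tilde a,[\tilde b,\tilde c]]}{n^2(q^n-q^{-n})^2} = \frac{[\tilde a,\widetilde{D_b(c)}]}{n(q^n-q^{-n})} + \frac{[\tilde a,r_{bc}]}{n^2}, \]
whose first summand tends to $D_a(D_b(c)) = \{a,\{b,c\}\}$ and whose second summand tends to $\tfrac{1}{n^2}[a, r_{bc}|_{q=\e}]$, which vanishes because $a$ is central in $U_\e$. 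Cyclically summing and invoking the associative Jacobi identity above then yields the Poisson-Jacobi identity.

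The main obstacle I expect is precisely this double-limit step: the remainder $r_{bc}$ need not lie in $Z_0$ nor have a canonical limit of its own, but the specific combination $[\tilde a, r_{bc}]/n^2$ drops out because $a$ becomes central in $U_\e$ in the limit. This centrality is the algebraic mechanism that makes the second-order expansion of the commutator well-defined, and it is the analogue in the present quasi-classical setting of the standard Poisson-bracket construction on a central subalgebra of a one-parameter family of associative algebras.
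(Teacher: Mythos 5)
Your proposal is correct and follows the route the paper's setup implicitly dictates: the paper states the Jacobi identity as an unproved assertion in the sentence preceding the corollary, and your argument (semi-classical limit of the associative Jacobi identity, with the second-order remainder killed by centrality of $Z_0$ at $q=\e$) is the standard way to fill in that gap. The only technical caveat is that $r_{bc}$ is not literally in $U_q$ with Laurent-polynomial coefficients but in a localization of $U_q$ at $q=\e$ (the same caveat already applies to the paper's claim that $[\tilde a,\tilde u]\in(q^n-q^{-n})U_q$ in Proposition \ref{center}(ii)); this does not affect the limit argument.
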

\begin{xca}\label{formPoisson} (a) Check that  
$$\begin{array}{c}\{e,x\} = -zx^2\ ,\ \{e,y\} = z-z^{-1}\ ,\ \{e,z\} = xz^2\\ \{f,x\} = z^{-1}-z\ ,\ \{f,y\} = y^2z\ ,\ \{f,z\} = -yz^2\\ \{y,x\} = 1-xy-z^{-2}\ ,\ \{z,x\} = zx\ ,\ \{z,y\} = -yz.\end{array}$$
(b) Deduce that the Poisson bracket $\psi_*\{\ ,\ \}$ on $H$ does not depend on the order $n$ of $\e$. 

\noindent (c) Show that for any algebra automorphism $\phi$ of $\Ue$ we have \begin{equation}\label{autder} \phi D_a \phi^{-1}(u) = D_{\phi(a)}(u).                                                                                                                                                                                                                                                                                  \end{equation}
Deduce that the group of algebra automorphisms of $\Ue$ induces a group of Poisson automorphisms of $({\rm Spec}(Z_0),\{\ ,\ \})$. By using the braid group automorphism \eqref{braid}, compute the value of the derivation $D_f$ on $E$, $F$ and $K^{\pm 1}$.

(d) Show that the derivations $D_{\tilde a}$ obtained from the $D_a$ by allowing the lift $\tilde{a}$ to be arbitrary generate a Lie algebra $\mathcal{L}$. Show that  $\mathcal{L}$ fits into an exact sequence of Lie algebras $0\longrightarrow \mathcal{L}^0 \longrightarrow \mathcal{L} \longrightarrow \mathcal{L}'\longrightarrow 0$, where $\mathcal{L}^0$ (resp. $\mathcal{L}'$) is the Lie algebra of inner derivations of $\Ue$ (resp. of derivations of $Z_0$ induced by $\{\ ,\ \}$) . 

\end{xca}
\begin{remark}\label{dualLPG} \textit{(Semi-classical geometry, I)} By Exercise \ref{formPoisson} the Poisson bracket $\psi_*\{\ ,\ \}$ is canonically associated to $\Uq$. Conversally, $(H,\psi_*\{\ ,\ \})$ is \textit{dual} to the standard Poisson-Lie structure on $PSL_2\mc$, and the {\it rational quantization} of the latter is $\Uq$ (\cite[\S 7]{DCKP}, \cite[\S 11, 14 $\&$ 19]{DCP}).
\end{remark}
Denote by $PSL_2\mc^0=U_-TU_+$ the {\it big cell} of $PSL_2\mc$; it is the open subset consisting of matrices up to sign with non vanishing upper left entry. We have an unramified $2$-fold covering 
\begin{equation}\label{tobigcell}\begin{array}{ccll} \sigma:& H & \longrightarrow & PSL_2\mc^0\\
   & (t^{-1}u_-,tu_+) & \longmapsto & u_-^{-1}t^2u_+.
  \end{array}\end{equation}
Consider the matrices
\begin{equation}\label{fundrep}
\left(\begin{array}{cc} 0 & 1\\ 0 & 0\end{array}\right)\ ,\ \left(\begin{array}{cc} 0 & 0\\ 1& 0\end{array}\right)\ ,\ \left(\begin{array}{cr} 1 & 0\\ 0 & -1\end{array}\right).
\end{equation}
They correspond to the Chevalley generators of $sl_2\mc$ in its fundamental representation. Since $\sigma$ is an unramified covering, the associated (complex) left invariant vector fields on $PSL_2\mc$ lift to vector fields $\underline{e}$, $\underline{f}$ and $\underline{h}$ on $H$. By using Proposition \ref{center} we consider them as vector fields on Spec$(Z_0)$. 

Recall the generators $e$, $f$ and $z$ of $Z_0$, which define functions on Spec$(Z_0)$. Denote by $\mathfrak{g}$ (resp. $\tilde{\mathfrak{g}}$) the Lie algebras of vector fields on Spec$(Z_0)$ generated by $\underline{e}$ and $\underline{f}$ (resp. by $D_e$ and $D_f$). 
\begin{proposition}\label{fields} \textit{We have
\begin{equation}\label{vect} D_e=z\underline{f}\ ,\ D_f=-z\underline{e}\ ,\ D_z=z\underline{h}/2
\end{equation}
and 
\begin{equation}\label{vect2}[D_e,D_f] = z^2\underline{h}-z^2x\underline{e}+z^2y\underline{f}.\end{equation}
Hence the linear spans of $\mathfrak{g}$ and $\tilde{\mathfrak{g}}$ coincide at every point of Spec$(Z_0)$.}
\end{proposition}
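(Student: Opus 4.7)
The plan is to verify the equalities \eqref{vect} and \eqref{vect2} by a direct coordinate computation on ${\rm Spec}(Z_0)\cong H$ in the chart $(x,y,z)$ of Proposition~\ref{center}(i), and then deduce the pointwise coincidence of spans.

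First I would write out $D_e,D_f,D_z$ as explicit vector fields on ${\rm Spec}(Z_0)$. By Proposition~\ref{center}(ii) each $D_a$ is a derivation, so it is determined by its action on the generators $x,y,z$ of $Z_0$, which are given by the formulas in Exercise~\ref{formPoisson}(a). Next I would compute the lifts $\underline e,\underline f,\underline h$ by writing $\sigma(h)\in PSL_2\mc^0$ explicitly as a matrix with entries in $\mc[x,y,z,z^{-1}]$, applying $d\sigma$ to each of $\partial_x,\partial_y,\partial_z$, and solving the linear system $d\sigma(\underline X)|_h=\tilde X|_{\sigma(h)}$ characterizing the unique lift of the vector field on $PSL_2\mc^0$ associated via \eqref{fundrep} to $X\in\{E,F,H\}$. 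Comparing the two coordinate expressions term by term yields \eqref{vect}.

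For \eqref{vect2} the cleanest route is via the Jacobi identity of $\{\,,\,\}$, which gives $[D_e,D_f]=D_{\{e,f\}}$ as derivations of $Z_0$. Using $e=-xz$ and $f=-yz$ (from \eqref{pref}--\eqref{prefbis}), together with Exercise~\ref{formPoisson}(a) and the Leibniz rule, one computes $\{e,f\}$ as an explicit polynomial in $x,y,z,z^{-1}$; expanding $D_{\{e,f\}}$ by the derivation property and substituting the identities \eqref{vect} then expresses $[D_e,D_f]$ in the basis $\{\underline e,\underline f,\underline h\}$ and produces \eqref{vect2}.

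The last claim is immediate from the previous two. Since $z=K^n$ is invertible in $Z_0$, $z$ is nowhere zero on ${\rm Spec}(Z_0)$; hence \eqref{vect} allows us to solve pointwise for $\underline f=z^{-1}D_e$ and $\underline e=-z^{-1}D_f$, and then \eqref{vect2} exhibits $z^2\underline h$ as a linear combination of $[D_e,D_f]$, $D_e$ and $D_f$, placing all three lifts in the pointwise span of $\tilde{\mathfrak g}$; the reverse inclusion follows directly from \eqref{vect}. Hence the linear spans of $\mathfrak g$ and $\tilde{\mathfrak g}$ coincide at every point of ${\rm Spec}(Z_0)$. The main obstacle is the explicit calculation of the lifts $\underline e,\underline f,\underline h$ through the two-fold covering $\sigma$: signs, and the factor of $2$ in $D_z=z\underline h/2$ that traces back to the squaring of the Cartan part in \eqref{tobigcell}, have to be tracked carefully. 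Once the lifts are in hand, the remainder is a mechanical Leibniz computation.
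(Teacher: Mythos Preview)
Your argument is correct and close in spirit to the paper's, but the details of each step differ. For \eqref{vect} the paper also works through the map $\sigma$, but instead of computing $d\sigma$ on $\partial_x,\partial_y,\partial_z$ and inverting a linear system, it uses the observation that for the matrix coordinate $M=\sigma(\,\cdot\,)$ the left-invariant field associated to $a\in sl_2$ acts by $a_*(M)(g)=[a,M(g)]$ (the adjoint representation), and then checks directly that $D_eM=[zF,M]$, etc. This avoids the inversion you describe and makes the factor $z$ and the sign conventions visible in a single matrix identity.

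For \eqref{vect2} the paper takes a shorter route: once \eqref{vect} is known it simply applies the identity $[gU,hV]=gh[U,V]+gU(h)V-hV(g)U$ with $U=\underline f$, $V=\underline e$, $g=z$, $h=-z$, together with $[\underline e,\underline f]=\underline h$ and the values $\underline e(z),\underline f(z)$ (read off from \eqref{vect} and Exercise~\ref{formPoisson}(a)). Your route via the Jacobi identity $[D_e,D_f]=D_{\{e,f\}}$ is valid, but note that substituting \eqref{vect} is not quite enough by itself: after computing $\{e,f\}=1-z^2-xyz^2$ and expanding $D_{\{e,f\}}$ you land on $D_x,D_y,D_z$, and you must first invert the relations $e=-xz$, $f=-yz$ to express $D_x,D_y$ in terms of $D_e,D_f,D_z$ before \eqref{vect} applies. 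That extra step is routine, but it is where the work in your approach actually lies. Your final paragraph on the coincidence of pointwise spans is the same as the paper's.
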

\begin{proof} Since all vector fields are algebraic, it is enough to check \eqref{vect} on functions on Spec$(Z_0)\cong H$ lifting functions on $PSL_2\mc^0$, and we can restrict to coordinates of the map $\sigma$. Put
 $$\ u_- = \pm\left(\begin{array}{rc} 1 & 0\\ -y & 1\end{array}\right)\ ,\ u_+ = \pm\left(\begin{array}{cr} 1 & -x\\ 0& 1\end{array}\right)\ ,\ t = \pm\left(\begin{array}{cc} \sqrt{z} & 0\\ 0 & 1/\sqrt{z}\end{array}\right)$$
and
$$M = \sigma(tu_+,t^{-1}u_-) = u_-^{-1}t^2u_+ = \pm\left(\begin{array}{rc} z & -zx\\ zy & -zxy+z^{-1}\end{array}\right).$$
By Exercise \ref{formPoisson} (a) we have
$$D_eM  = \pm\left(\begin{array}{cc} z^2x & 0\\ z^2xy+z^2-1 & -z^2x\end{array}\right).$$
Let $a\in sl_2\mc$. Denote by $a_*$ the left invariant vector field  on $PSL_2\mc$ associated to $a$, and $\underline{a}$ its pull-back to Spec$(Z_0)$ via $\sigma$. By definition, $\underline{a}(M) = a_*(M)$, and for all $g\in PSL(2,\mc)^0$, 
$$a_*(M)(g) = \frac{{\rm d}}{{\rm dt}}\left(M(ge^{at})\right)_{t=0} = \frac{{\rm d}}{{\rm dt}}\left(R_{e^{at}}^*M\right)_{t=0}(g),$$ 
where $R_{e^{at}}$ denotes right translation by $e^{at}$. On another hand, for any vector field $X$ on $PSL_2\mc$ we have $R_{h*}X = {\rm Ad}_{h^{-1}}(X)$. Since the adjoint representation is faithful, dually $(R_{h}^*\rho)(g) = {\rm Ad}_{h}(\rho(g))$ for any linear representation $\rho$ of $PSL_2\mc$. Hence 
$$a_*(M)(g) = {\rm ad}_{a}(M)(g) = [a,M(g)].$$
It is immediate to check that $$[\left(\begin{array}{cc} 0 & 0\\ z& 0\end{array}\right),M] = D_eM.$$
Hence $D_e=z\underline{f}$. The formulas for $D_f$ and $D_z$ can be proved in the same way. We obtain \eqref{vect2} by using $[\underline{e},\underline{f}] =\underline{h}$, the formulas in Exercise \ref{formPoisson} (a), and the fact that 
$$[fU,gV] = fg[U,V]+fU(g)V-gV(f)U$$
for any vector fields $U$, $V$ and functions $f$, $g$.  
\end{proof}
\subsection{Flows on $\Ue$} The infinitesimal action of the Lie algebra $\tilde{\mathfrak{g}}$ generated by $D_e$ and $D_f$ can be integrated to an infinite dimensional group of automorphisms of the algebra $\Ue$. In fact, in order to make sense of this we have to allow holomorphic series in the generators of $Z_0$ as coefficients. 

More precisely, denote by $\hat{Z}_0$ the algebra of power series in the generators $x$, $y$, $z$ and $z^{-1}$ of $Z_0$ which converge to a holomorphic function for all values of $(x,y,z)$ in $\mc^2\times \mc^*$. Set
\begin{equation}\label{extalg}\hat{U}_\e = \Ue \otimes_{Z_0} \hat{Z}_0\ ,\ \hat{Z}_\e = Z_\e \otimes_{Z_0} \hat{Z}_0.
\end{equation}

\begin{proposition} \label{qca} \textit{For all values of $t\in \mc$ the series $\exp(tD_e)$, $\exp(tD_f)$ and $\exp(tD_z)$ converge to automorphisms of $\hat{U}_\e$ preserving $\hat{Z}_0$ and $\hat{Z}_\e$. }
\end{proposition}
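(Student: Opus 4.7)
I would integrate the three derivations on generators of $\hat U_\e$ explicitly and extend by multiplicativity, using the fact that $D_e, D_f, D_z$ preserve $Z_0$ and $Z_\e$ (Proposition~\ref{center}). For $D_z$ the calculation is immediate from \eqref{derform2}: $D_z(K^{\pm 1}) = 0$ while $D_z^k(E) = (z/n)^k E$ and $D_z^k(F) = (-z/n)^k F$, so $\exp(tD_z)$ multiplies the PBW generators $E, F, K^{\pm 1}$ by the entire $\hat Z_0$-valued scalars $\exp(tz/n)$, $\exp(-tz/n)$, $1$. For $D_e$ we have $D_e(E)=0$ (as $E^n$ commutes with $E$), so $\exp(tD_e)(E)=E$. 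Rewriting $(\e-\e^{-1})^n E^n = e$ in \eqref{derform1} gives $D_e(K^{\pm 1}) = \mp(e/n)K^{\pm 1}$; since $e$ is central, iteration yields $D_e^k(K^{\pm 1}) = (\mp e/n)^k K^{\pm 1}$ and $\exp(tD_e)(K^{\pm 1}) = \exp(\mp te/n)\,K^{\pm 1} \in \hat U_\e$. Finally the first line of \eqref{derform1} shows $D_e(F) = g\,E^{n-1}$ with $g \in \Ue$ a $\mathbb{C}$-linear combination of $K$ and $K^{-1}$; since $D_e(E^{n-1})=0$, iteration produces $D_e^k(F) = p_k(K,K^{-1})(e/n)^{k-1}E^{n-1}$ for $k\geq 1$, and summing gives $\exp(tD_e)(F) = F + h(t)\,E^{n-1} \in \hat U_\e$ with $h(t)$ a $\mathbb{C}$-combination of $K, K^{-1}$ whose coefficients are entire in $t$ and $e$ (the apparent $1/e$ pole cancels against $\exp(\pm te/n)-1$). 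The case of $D_f$ reduces to $D_e$ via the braid automorphism $T$ of \eqref{braid}, cf.\ Exercise~\ref{formPoisson}(c).

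Next I would define $\phi_t\colon \hat U_\e \to \hat U_\e$ as the unique $\mathbb{C}$-algebra endomorphism extending the above action on $E, F, K^{\pm 1}$ together with the action on $\hat Z_0$ obtained by integrating the Hamiltonian flow of $e$. By Exercise~\ref{formPoisson}(a) this flow reads $\dot x = -zx^2$, $\dot y = z-z^{-1}$, $\dot z = xz^2$; the conservation law $\tfrac{d}{dt}(xz)=0$ makes $e=-xz$ constant, and the system integrates explicitly to
\[
z(t)=z_0\exp(-et),\quad x(t)=x_0\exp(et),\quad y(t)=y_0+\tfrac{z_0}{e}\bigl(1-\exp(-et)\bigr)-\tfrac{1}{ez_0}\bigl(\exp(et)-1\bigr),
\]
each holomorphic on $\mathbb{C}\times\mathbb{C}^2\times\mathbb{C}^*$ (with removable singularity at $e=0$). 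Thus $\phi_t|_{\hat Z_0}$ is an automorphism of $\hat Z_0$, and $\phi_t$ is consistently defined on all of $\hat U_\e$ via multiplicativity.

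Finally, one must verify that $\phi_t$ respects the defining relations \eqref{nonres}. Because $D_e$ is a derivation of $\Ue$ annihilating these relations, the formal series $\sum_k (t^k/k!)D_e^k$ preserves them to all orders in $t$; the convergence on generators established above upgrades this formal identity to an honest identity in $\hat U_\e$. Concretely, $\phi_t(K)\phi_t(E)=\e^2\,\phi_t(E)\phi_t(K)$ is immediate since $\exp(\mp te/n)$ is central, $\phi_t(K)\phi_t(F)=\e^{-2}\,\phi_t(F)\phi_t(K)$ uses $KE^{n-1}K^{-1}=\e^{-2}E^{n-1}$ (valid as $\e^n=1$), and the commutator relation reduces to a cancellation in $[E, h(t)E^{n-1}]$ forced by the explicit $e$-dependence of $h(t)$. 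The one-parameter group law $\phi_s\circ\phi_t=\phi_{s+t}$ then follows by uniqueness of solutions to the ODE on generators, so $\phi_{-t}=\phi_t^{-1}$ and $\phi_t$ is an automorphism; preservation of $\hat Z_0$ and $\hat Z_\e$ is inherited from $D_e(Z_0)\subset Z_0$ and $D_e(Z_\e)\subset Z_\e$. The hardest part is the commutator consistency check: the conceptual argument via derivations is transparent, but the explicit verification hinges on the precise cancellation in $h(t)$, and it is essential that the $1/e$ factor cancels so that $\phi_t$ is defined globally, not just for $e\neq 0$.
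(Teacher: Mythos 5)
Your proof is correct and takes essentially the same route as the paper: integrate $D_z$ trivially from \eqref{derform2}, compute $\exp(tD_e)$ explicitly on the generators $E,F,K^{\pm1}$ (the paper records precisely the formulas you derive, including the entire function $\bigl(e^{\mp te/n}-1\bigr)/e$ multiplying $E^{n-1}$), reduce $D_f$ to $D_e$ via the braid automorphism $T$ and Exercise~\ref{formPoisson}(c), and deduce stability of $\hat Z_0$, $\hat Z_\e$ from Proposition~\ref{center}. You supply additional verification (the Hamiltonian flow on $\hat Z_0$, the check of the defining relations, the group law) that the paper leaves implicit, but the core computation is identical.
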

\begin{proof} By \eqref{derform2} the statement is clearly true for $\exp(tD_z)$. To conclude it is enough to check that the series $\exp(tD_e)$ when applied to the generators $K$ and $F$ converge to an element of $\hat{U}_e$; the result for $\exp(tD_f)$ follows from this by using $T(e) = f$ (see \eqref{prefbis}) and Exercise \ref{formPoisson} (c). Now, by \eqref{derform1}-\eqref{derform2} we have
$$\exp(tD_e)K = \exp(-te/n)K$$  
and 
$$(D_e)^l(F) = -\frac{(\e-\e^{-1})^{n-2}}{n^l} e^{l-1}((-1)^{l}K\e +K^{-1}\e^{-1})E^{n-1}.$$
Hence
$$\exp(tD_e)F = F -(\e-\e^{-1})^{n-2}\left(\frac{e^{-te/n}-1}{e}K\e+\frac{e^{te/n}-1}{e}K^{-1}\e^{-1}\right)E^{n-1}.$$
The stability of $\hat{Z}_0$ and $\hat{Z}_\e$ follows from that of $Z_0$ and $Z_\e$ under the derivations $D_a$ (see Proposition \ref{center}). 
\end{proof}
\begin{definition} \label{qcagroup} The subgroup $\mathcal{G}$ of ${\rm Aut}(\hat{U}_\e)$ is generated by the $1$-parameter groups $\exp(tD_e)$ and $\exp(tD_f)$, $t\in \mc$.
\end{definition}
Since $\Gg$ leaves $\hat{Z}_0$ and $\hat{Z}_\e$ invariant, it acts dually by holomorphic transformations on the varieties Spec$(Z_0)$ and Spec$(Z_\e)$:
$$\forall g\in \Gg,u\in \hat{Z}_\e,\chi\in {\rm Spec}(Z_\e),\quad (g.u)(\chi) = u(g^{-1}.\chi).$$
By Proposition \ref{fields}, the $\Gg$-action on Spec$(Z_0)$ lifts the conjugation action of $PSL_2\mc$. Also, it follows from \eqref{vect2} that the $\Gg$-orbits are both open and closed in the preimage of any conjugacy class, and so are connected. Since any element of $PSL_2\mc$ is conjugate to one in a given Borel subgroup, any conjugacy class in $PSL_2\mc$ intersects $PSL_2\mc^0$ in a non empty smooth connected variety. Hence we have \cite[Prop. 6.1]{DCKP}:
\begin{theorem}\label{QCAteo} (i) For any conjugacy class $\Gamma$ in $PSL_2\mc$, the connected components of the variety $\sigma^{-1}\Gamma$ are orbits of $\Gg$ in Spec$(Z_0)$. 

(ii) The set $\sigma^{-1}(\{{\rm id}\})$ coincides with the fixed point set of $\Gg$ in Spec$(Z_0)$.
\end{theorem}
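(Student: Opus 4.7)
The plan is to reduce both parts to an infinitesimal statement: at each point $p \in {\rm Spec}(Z_0)$, the pointwise span of $\tilde{\mathfrak{g}}$ coincides with the tangent space $T_p\sigma^{-1}(\Gamma)$, where $\Gamma$ is the conjugacy class of $\sigma(p)$.

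For (i), I would first observe that each $\Gg$-orbit is contained in a single $\sigma^{-1}(\Gamma)$. This follows because, by Proposition \ref{fields}, the derivations $D_e$ and $D_f$ correspond via $\sigma$ to vector fields tangent to conjugacy classes of $PSL_2\mc$, so the flows $\exp(tD_e)$ and $\exp(tD_f)$ preserve $\sigma^{-1}(\Gamma)$. Next, from \eqref{vect} and \eqref{vect2} the three tangent vectors $D_e|_p$, $D_f|_p$, $[D_e,D_f]|_p$ read, in terms of the fundamental adjoint vector fields $\underline{e}, \underline{f}, \underline{h}$,
$$ z\,\underline{f}|_p,\quad -z\,\underline{e}|_p,\quad z^2\underline{h}|_p - z^2 x\,\underline{e}|_p + z^2 y\,\underline{f}|_p, $$
so where $z \ne 0$ they span the same subspace of $T_pH$ as $\{\underline{e},\underline{f},\underline{h}\}|_p$, which equals the lift of $T_{\sigma(p)}\Gamma$ under the unramified covering $\sigma$, i.e.\ $T_p\sigma^{-1}(\Gamma)$. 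The orbit map $\Gg \to \sigma^{-1}(\Gamma)$, $g \mapsto g.p$, therefore has surjective differential at $p$, so by the rank theorem each orbit is open in $\sigma^{-1}(\Gamma)$; its complement being a union of other open orbits, it is also closed. Being the image of the connected group $\Gg$, each orbit is connected, hence equals a full connected component of $\sigma^{-1}(\Gamma)$, using here that $\Gamma \cap PSL_2\mc^0$ is smooth (so $\sigma^{-1}(\Gamma)$ is a smooth submanifold of ${\rm Spec}(Z_0)$).

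For (ii), the inclusion $\sigma^{-1}(\{\mathrm{id}\}) \subseteq \mathrm{Fix}(\Gg)$ follows from (i) applied to $\Gamma = \{\mathrm{id}\}$: since $\sigma$ is a $2$-fold covering, $\sigma^{-1}(\{\mathrm{id}\})$ is a discrete $2$-point set, and by (i) each point is a $0$-dimensional $\Gg$-orbit, i.e.\ a fixed point. Conversely, if $p$ is a fixed point, then its orbit $\{p\}$ is a connected component of $\sigma^{-1}(\Gamma)$ for $\Gamma$ the conjugacy class of $\sigma(p)$. Since every non-identity conjugacy class in $PSL_2\mc$ is $2$-dimensional, a $0$-dimensional component forces $\Gamma = \{\mathrm{id}\}$, whence $\sigma(p) = \mathrm{id}$.

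The main obstacle is the tangent-space identification at the heart of (i). A priori the vectors $D_e|_p, D_f|_p, [D_e,D_f]|_p$ could span a $3$-dimensional subspace, but because $\underline{e}, \underline{f}, \underline{h}$ are the fundamental vector fields of the adjoint action, they satisfy a linear relation at every non-central point (the centralizer direction), forcing the span to agree with $T_p\sigma^{-1}(\Gamma)$. Making this dimension count rigorous — and reading $\underline{e}, \underline{f}, \underline{h}$ in Proposition \ref{fields} as these adjoint generators through the identification that the $\Gg$-action lifts $PSL_2\mc$-conjugation — is the key technical point on which the proof rests.
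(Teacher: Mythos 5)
Your argument is correct and follows the same route the paper sketches in the paragraph preceding Theorem~\ref{QCAteo} (and attributes to \cite[Prop.~6.1]{DCKP}): you use Proposition~\ref{fields} to identify the pointwise span of $D_e$, $D_f$, $[D_e,D_f]$ with the tangent space to $\sigma^{-1}\Gamma$, deduce that $\Gg$-orbits are open and closed in $\sigma^{-1}\Gamma$ and connected, hence are exactly the connected components, and then read off (ii) as the zero-dimensional case. The ``key technical point'' you flag at the end is not an additional gap to fill: it is precisely the content of Proposition~\ref{fields} and its proof, where the paper shows $a_*(M)(g)=[a,M(g)]$, so that $\underline{e},\underline{f},\underline{h}$ are the infinitesimal generators of the conjugation action (not left-translations) and their span at $p$ is by definition $T_p\sigma^{-1}\Gamma$; your worry about the span being $3$-dimensional is therefore already excluded once \eqref{vect}--\eqref{vect2} express $D_e$, $D_f$, $[D_e,D_f]$ as $\hat{Z}_0$-linear combinations of $\underline{e},\underline{f},\underline{h}$.
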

\begin{remark} \textit{(Semi-classical geometry, II)} (i) By construction, the orbits of the $\Gg$-action on Spec$(Z_0)$ are symplectic leaves for the Poisson bracket $\{\ ,\ \}$. A result of M. Semenov-Tian-Shansky implies that they coincide under $\psi$ with the orbits of the dressing action of $PSL_2\mc$ on $H$ (\cite[Ch. 1.3]{KS}, \cite[Ch. 1.5]{CP}) (see Remark \ref{dualLPG}). However there does not seem to have any homomorphism $\Gg \rightarrow PSL_2\mc$ relating these two actions on $H$.

(ii) The action of $\Gg$ on Spec$(Z_0)$ is called the {\it quantum coadjoint action} \cite{DCK}, because it coincides at the tangent space of fixed points with the coadjoint action of $PSL_2\mc$ on its dual Lie algebra. 
\end{remark}
\section{Representation theory of $\Ue$}\label{MODULES}
In this section we use the quantum coadjoint action of $\Gg$ on Spec$(Z_0)$, and its extension to $\hat{U}_e$, to describe the representation theory of $U_\e$ in geometric terms. The main results are Theorem \ref{bundle} and Theorem \ref{bundle2}.
\smallskip

To begin with, recall that two $\Ue$-modules $V$ and $W$ are \textit{isomorphic} if there exists a linear isomorphism $\theta:V\ra W$ commuting with the action of $\Ue$. A $\Ue$-module is \textit{simple} if it has no proper non trivial submodule. In what follows, all modules are \textit{left} modules.

It is classical that any simple $\Ue$-module is a finite dimensional vector space \cite[p. 339]{CP}, and that the action of the center $Z_\e$ is by scalars. More precisely: \begin{lemma} \label{scacentre} Any central element $u\in Z_q$ acts on a finite dimensional simple $\Uq$-module $V$ by multiplication by a scalar.
\end{lemma}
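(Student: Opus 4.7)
The plan is to apply the standard Schur's lemma argument, using the fact that $V$ is finite-dimensional over the algebraically closed field $\mc$. The central element $u$ acts on $V$ as a linear endomorphism $\rho_V(u)\colon V \ra V$, and because $u$ commutes with every element of $\Uq$, this endomorphism commutes with the $\Uq$-action: $\rho_V(u)\rho_V(x) = \rho_V(ux) = \rho_V(xu) = \rho_V(x)\rho_V(u)$ for all $x \in \Uq$.

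Next, since $V$ is a non-zero finite dimensional complex vector space, $\rho_V(u)$ admits at least one eigenvalue $\lambda \in \mc$, and the corresponding eigenspace $V_\lambda = \ker(\rho_V(u) - \lambda \, {\rm id}_V)$ is non-trivial. I would then observe that $V_\lambda$ is a $\Uq$-submodule of $V$: for any $x \in \Uq$ and $v \in V_\lambda$, the identity $\rho_V(u)\rho_V(x)v = \rho_V(x)\rho_V(u)v = \lambda \rho_V(x)v$ shows $\rho_V(x)v \in V_\lambda$.

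Finally, the simplicity of $V$ forces $V_\lambda = V$, hence $\rho_V(u) = \lambda \, {\rm id}_V$, which is the claim. The only non-routine ingredient is the existence of an eigenvalue, which relies on the finite-dimensionality of $V$ together with the algebraic closedness of $\mc$; this is precisely why the assumption that $V$ is finite-dimensional cannot be dropped. There is no real obstacle here — the argument is a textbook application of Schur's lemma — so I would keep the proof to a few lines.
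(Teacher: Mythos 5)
Your proof is correct and follows exactly the same route as the paper's: pick an eigenvalue of the central operator (using finite-dimensionality and algebraic closedness of $\mc$), check that the eigenspace is a nonzero submodule, and invoke simplicity. The only difference is that you spell out the submodule verification, which the paper leaves implicit.
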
 
\begin{proof} Since $\mc$ is algebraically closed and $V$ is finite dimensional, the linear operator $u_V \in{\rm End}(V)$ associated to $u$ has an eigenvalue $\lambda$.  Because $u$ is central, $u_V$ commutes with the action of $\Uq$. Hence the kernel of $u_V-\lambda .{\rm id}$ must be the whole of $V$, for otherwise it would be a proper non zero submodule.\end{proof}
By the lemma we can associate to any simple $\Ue$-module $V$ the central character $\chi_V\colon Z_\e\ra \mc$ in ${\rm Spec}(Z_\e)$ such that $uv = \chi_V(u)v$ for all $u\in Z_\e$ and $v\in V$. Clearly any two isomorphic $\Ue$-modules define the same central character. Hence, denoting by ${\rm Rep}(U_\e)$ the set of isomorphism classes of simple $U_\e$-modules, we have a {\it central character map}
\begin{equation}\label{Xi}\begin{array}{lcll} \Xi\colon& {\rm Rep}(\Ue) & \longrightarrow & {\rm Spec}(Z_\e)\\
& V & \longmapsto & \chi_V. \end{array}\end{equation}   
For any $\chi\in {\rm Spec}(Z_\e)$, consider the two-sided ideal $\mathcal{I}^{\chi}$ of $\Ue$ generated by its kernel, ${\rm Ker}(\chi) =\{z-\chi(z) .{\rm id} |z\in Z_\e\}$. Define an algebra
\begin{equation}\label{redalg} \Ue^{\chi} = \Ue/\mathcal{I}^{\chi}.
\end{equation}
 By Theorem \ref{finite} and the fact that the degree of a homogeneous element of $Z_\e$ is always a multiple of $n$, the algebra $\Ue^{\chi}$ is finite dimensional over $\mc$ and non zero. It is also a $\Ue$-module by the left regular representation on cosets, and clearly any simple submodule of $\Ue^{\chi}$ is in $\Xi^{-1}(\chi)$. This proves:
\begin{lemma} The central character map $\Xi$ is surjective. 
\end{lemma}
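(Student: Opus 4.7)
The plan is to exploit directly the algebra $\Ue^{\chi}$ introduced just before the statement: since it is both non zero and finite dimensional over $\mc$, standard arguments will extract from it a simple $\Ue$-module whose central character is forced to be $\chi$.

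First, I would observe that the action of $Z_\e$ on $\Ue^\chi$ (via the left regular representation) is by the scalar $\chi$. Indeed, for any $z\in Z_\e$ and any coset $[u] = u + \mathcal{I}^\chi \in \Ue^\chi$, the element $(z-\chi(z))u = u(z-\chi(z))$ lies in $\mathcal{I}^\chi$ because $z-\chi(z)\in {\rm Ker}(\chi)$ and $\mathcal{I}^\chi$ is two-sided, so $z\cdot [u] = \chi(z)[u]$. In particular every $\Ue$-submodule of $\Ue^\chi$, and every subquotient, inherits the central character $\chi$.

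Next, since $\Ue^\chi$ is finite dimensional over $\mc$ and non zero, it is in particular an Artinian (left) $\Ue$-module. Any such module contains a simple submodule: pick a non zero element $v\in \Ue^\chi$ and choose, among the submodules contained in $\Ue v$, one of minimal non zero dimension. Call it $V$. By construction $V$ is a simple $\Ue$-module, and by the previous paragraph every $z\in Z_\e$ acts on $V$ by multiplication by $\chi(z)$. Hence $\chi_V=\chi$, i.e.\ $\Xi(V)=\chi$, which proves surjectivity.

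There is no real obstacle here: the whole content of the lemma is packaged in the non vanishing and finite dimensionality of $\Ue^\chi$, which have been established by using the freeness of $\Ue$ as a $Z_0$-module of rank $n^3$ (Theorem~\ref{finite}) together with the finiteness of the projection $\tau:\mathrm{Spec}(Z_\e)\to\mathrm{Spec}(Z_0)$. The only care needed is to note that the center acts on the quotient $\Ue/\mathcal{I}^\chi$ by the scalar character $\chi$, which is automatic from the two-sidedness of $\mathcal{I}^\chi$ and commutativity of $Z_\e$.
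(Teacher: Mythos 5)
Your proof is correct and takes the same route as the paper: the text preceding the lemma already records that $\Ue^{\chi}$ is nonzero and finite dimensional, and that any simple submodule of it must lie in $\Xi^{-1}(\chi)$; you simply fill in the two details the paper leaves as ``clear,'' namely that $Z_\e$ acts on $\Ue^{\chi}$ through the scalar $\chi$ (using two-sidedness of $\mathcal{I}^{\chi}$) and that a nonzero finite-dimensional module has a simple submodule.
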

In order to clarify the properties of $\Xi$ we are going to describe the set Rep$(U_\e)$ explicitly.
\subsection{The simple $\Ue$-modules}\label{simpledef} Consider the following two families of $\Ue$-modules:
\begin{itemize}
\item the modules $V_{r}^\pm$ of dimension $r+1$, where $0\leq r\leq n-1$, with a basis $v_0,\ldots ,v_r$ such that  
$$Kv_j = \pm \e^{r-2j}v_j\quad ,\quad F v_j = \left\lbrace \begin{array}{ll} v_{j+1}, & \mbox{if} \ j<r\\ 0, & \mbox{if} \ j=r\end{array}\right.$$
$$E v_j = \left\lbrace \begin{array}{ll}\pm [j][r-j+1] v_{j-1}, & \mbox{if} \ j>0\\ 0,& \mbox{if} \ j=0.\end{array}\right.$$ 
\item the modules $V(\lambda,a,b)$ of dimension $n$, where $\lambda\in \mc^*$ and $a$, $b\in\mc$, with a basis $v_0,\ldots ,v_{n-1}$ such that     
$$Kv_j =  \lambda\varepsilon^{-2j}v_j\quad ,\quad F v_j = \left\lbrace \begin{array}{ll} v_{j+1}, & \mbox{if} \ j<n-1\\ bv_0, & \mbox{if} \ j=n-1\end{array}\right.$$
$$E\cdot v_j = \left\lbrace \begin{array}{ll} a v_{n-1}, & \mbox{if} \ j=0\\ \left(ab+[j]\frac{\lambda\e^{1-j}-\lambda^{-1}\e^{j-1}}{\e-\e^{-1}}\right)v_{j-1},& \mbox{if} \ j>0.\end{array}\right.$$ 
\end{itemize}
That these formulas actually define actions of $\Ue$ is a direct consequence of \eqref{nonres} and \eqref{comEFutile}. We will see in Section \ref{CGO} that the modules $V_r^-$ and $V_r^+$ are essentially equivalent (we have an isomorphism $V_r^- = V_0^-\otimes V_r^+$). The two families of modules $V_r^\pm$ and $V(\lambda,a,b)$ intersect exactly at $V_{n-1}^\pm = V(\pm \e^{-1},0,0)$. The action of $\Ue$ on the modules $V_r^\pm$ is often expressed in the litterature in a different form, by using the ``balanced'' basis $m_j = v_j/[j]!$ rather than the $v_j$. \smallskip

\subsubsection{ Highest weight $\Ue$-modules} The modules $V_r^\pm$ and $V(\lambda,0,b)$ have the important common property to be highest weight modules. 

Recall that a $\Ue$-module is a \textit{highest weight module} of highest weight $\lambda\in \mc^*$ if it is generated by a non zero vector $v$ such that $Ev=0$ and $Kv=\lambda v$; $v$ is called a \textit{highest weight vector}. It is canonical up to a scalar factor, since $K$ is diagonal in the basis $F^iv$. In fact, there is a \textit{universal} $\Ue$-module of highest weight $\lambda$, the (infinite-dimensional) \textit{Verma module}  $M(\lambda)$ defined by $$M(\lambda) = \Ue/(\Ue \cdot E+\Ue \cdot (K-\lambda)).$$ Equivalently, $M(\lambda)$ has for highest weight vector the coset $v_0$ of $1\in \Ue$, and has the basis $v_0, v_1, v_2,\ldots$ where $v_i$ is the coset of $F^i$ and the action of $\Ue$ is given by
$$Kv_j = \lambda \e^{-2j}v_j\quad ,\quad F v_j = v_{j+1}$$ $$E v_j = \left\lbrace \begin{array}{ll}[j]\frac{\lambda \e^{1-j}-\lambda^{-1}\e^{j-1}}{\e-\e^{-1}} v_{j-1}, & \mbox{if} \ j>0\\ 0,& \mbox{if} \ j=0.\end{array}\right.$$
The linear independence of the $v_i$ follows from Theorem \ref{PBWteo}. From the formulas we see that the modules $V_r^\pm$ and $V(\lambda,0,b)$ are highest weight $\Ue$-modules of highest weights $\pm \e^{r}$ and $\lambda$, respectively, and that 
$$\ V_r^\pm = M(\pm \e^r)/M_r\ ,\ V(\lambda,0,b) = M(\lambda)/\Ue \cdot(v_n-bv_0),$$ 
where $M_r$ is the submodule of $M(\pm \e^r)$ spanned by the $v_i$ with $i>r$. Note that $\Ue \cdot(v_n-bv_0)$ is spanned by all $F^i(v_n-bv_0) = v_{i+n}-bv_i$, since $[n]=0$ implies $Ev_n=Ev_0=0$, and $K(v_n-bv_0) = \lambda(v_n-bv_0)$. 

The modules $V_r^\pm$ and $M(\lambda)$ extend to modules $V_{r,q}^\pm$ and $M(\lambda)_q$ over $\Uq$ for all values of $q$ such that $q^2\ne 1$, by replacing $\e$ by $q$ in all formulas. The $\Uq$-modules $V_{r,q}^\pm$ are defined in any dimension, hence with no bound on $r$, and still satisfy $V_{r,q}^\pm= M(\pm q^r)_q/M_r$. They are simple modules, and any simple $\Uq$-module is isomorphic to some $V_{r,q}^\pm$ (\cite[Prop. 2.6]{J}, \cite[Th. VI.3.5]{K}). In fact, $M(\lambda)_q$ is simple if and only if $\lambda\ne \pm q^l$ for all integers $l\geq 0$. When $\lambda= \pm q^l$, the $v_i$ for $i>l$ span a submodule of $M(\lambda)_q$ isomorphic to $M(q^{-2(l+1)}\lambda)$, and this is the only non trivial proper submodule of $M(\lambda)_q$ \cite[Prop. 2.5]{J}. \smallskip

\subsubsection{Symmetries} We have just claimed that the modules $M(\lambda)_q$ are not simple when $\lambda= \pm q^l$ for some integer $l\geq 0$. This follows from the fact that the vector $v_{l+1}$ satisfies
$$Ev_{l+1} =  \pm [l+1]\frac{q^l q^{1-(l+1)}-q^{-l}q^{(l+1)-1}}{q-q^{-1}} v_{l} = 0$$
and 
$$Kv_{l+1} =  \pm q^lq^{-2(l+1)}v_{l+1} = \pm q^{-l-2}v_{l+1}.$$
Hence we have an injection
\begin{equation}\label{inject}
 M(\pm q^{-l-2})_q \lra M(\pm q^l)_q
\end{equation}
mapping the highest weight vector $v_0$ of $M(\pm q^{-l-2})_q$, which is a simple $U_q$-module since $-l-2\leq 0$, to $v_{l+1}$. 

In particular, by specializing at $q=\e$ and taking the quotient by $\Ue \cdot(v_n-bv_0)$, we get that $V(\pm \e^r,0,0)$ is simple if and only if $r=n-1$, and there is for every $0\leq r\leq n-2$ a (non split) short exact sequence of $\Ue$-modules:
\begin{equation}\label{exactseq}
 0\lra V_{n-r-2}^\pm \lra V(\pm \e^r,0,0)\lra V_r^\pm \lra 0.
\end{equation}
The inclusion $V_{n-r-2}^\pm \ra V(\pm \e^r,0,0)$ maps the highest weight vector $v_0\in V_{n-r-2}^\pm$ to $v_{r+1}\in V(\pm \e^r,0,0)$. 

\begin{xca} \label{sym} \textit{(Application: the image of the Harish-Chandra homomorphism.)}  Recall that $\pi$ maps a central element $z = \sum_{i\geq 0} F^ih_iE^i$ of degree $0$ to the Laurent polynomial $\pi(z)=h_0 \in \mc[K,K^{-1}]$ (see \eqref{HCdef}). 

\noindent (a) Prove a statement analogous to Lemma \ref{scacentre} for finite dimensional highest weight $\Uq$-modules $V$ of highest weight $\lambda\in \mc^*$ (possibly, not simple): show that when $q$ is not a root of unity (resp. when $q=\e$), for all $v\in V$ and $z\in Z_q$ (resp. $Z_\e\cap U_0$) we have
\begin{equation}\label{hwc} zv = \pi(z)(\lambda)\ v.\end{equation}
(b) Deduce from \eqref{inject} and \eqref{hwc} that when $q$ is not a root of unity, for all $z\in Z_q$ and integer $l\geq 0$ we have
$$\pi(z)(\pm q^{l-1}) = \pi(z)(\pm q^{-l-1}).$$  
Similarly, deduce from \eqref{exactseq} and \eqref{hwc} that for all $z\in Z_\e\cap U_0$ and $1\leq r\leq n-1$ we have
$$\pi(z)(\pm \e^{r-1}) = \pi(z)(\pm \e^{-r-1}).$$
(c) Show that the monomials $K^{in}(K+K^{-1})^j$ and $K^l$, where $i\in \mz$, $0\leq j<n$, and $0<l<n$, form a linear basis of $\mc[K,K^{-1}]$. 

\noindent (d) Deduce from (b) and (c) that when $q$ is not a root of unity (resp. when $q=\e$), the homomorphism $\gamma_{-1}\circ \pi$ maps $Z_q$ (resp. $Z_\e\cap U_0$) to 
$\mc[K+K^{-1}]$ (resp. the polynomial subalgebra $\mc[K^n,K^{-n},K+K^{-1}]$ of $\mc[K,K^{-1}]$).
\end{xca}
By the discussion preceding Theorem \ref{amainteo}, we know that  $\gamma_{-1}\circ \pi$ is injective over $Z_q$ (resp. $Z_\e \cap U_0'$) when $q$ is not a root of unity (resp. when $q=\e$), and maps the Casimir element $\Omega$ to a scalar multiple of $K+K^{-1}$. Also, $Z_\e$ is generated by $E^n$, $F^n$ and $Z_\e \cap U_0'$, and $\gamma_{-1}\circ \pi(Z_\e \cap U_0') = \gamma_{-1}\circ \pi(Z_\e \cap U_0)$. Then, by Exercise \ref{sym} we have:
\begin{proposition}\label{generators}\textit{(i) When $q$ is not a root of unity, we have an isomorphism} $$\gamma_{-1}\circ \pi \colon Z_q\stackrel{\cong}{\lra} \mc[K+K^{-1}].$$ 

\noindent \textit{(ii) When $q=\e$, we have an isomorphism $$\gamma_{-1}\circ \pi \colon Z_\e\cap U_0'\stackrel{\cong}{\lra} \mc[K^n,K^{-n},K+K^{-1}]$$
mapping $\mc[\Omega]$ to $\mc[K+K^{-1}]$. Moreover, $Z_\e$ is generated by $E^n$, $F^n$, $K^{\pm n}$ and $\Omega$.}
 \end{proposition}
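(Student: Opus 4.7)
The plan is to read off both parts from what has already been established in the preceding discussion: injectivity of $\gamma_{-1}\circ\pi$ (on $Z_q$, respectively on $Z_\e\cap U_0'$); the formulas $\gamma_{-1}\circ\pi(\Omega)=(K+K^{-1})/(q-q^{-1})^2$ and, at $q=\e$, $\gamma_{-1}\circ\pi(K^{\pm n})=K^{\pm n}$ (which uses $\e^n=1$); the image inclusion from Exercise~\ref{sym}(d); and, at $q=\e$, the two identities $Z_\e=\langle E^n,F^n,Z_\e\cap U_0'\rangle$ and $\gamma_{-1}\circ\pi(Z_\e\cap U_0')=\gamma_{-1}\circ\pi(Z_\e\cap U_0)$, both recorded above the proposition.

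Part (i) follows at once: the injective map $\gamma_{-1}\circ\pi\colon Z_q\to\mc[K+K^{-1}]$ already hits the generator $K+K^{-1}$ via $\Omega$, so the subalgebra $\mc[\Omega]\subseteq Z_q$ already surjects onto the target; combined with injectivity on all of $Z_q$, this forces $\mc[\Omega]=Z_q$ and makes $\gamma_{-1}\circ\pi$ an isomorphism of polynomial algebras in one variable.

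For the isomorphism in part (ii), I would argue surjectivity as follows. Since $\pi\colon U_0\to\mc[K,K^{-1}]$ is an algebra map (its kernel $FU_0E$ being a two-sided ideal), its restriction to the subalgebra $Z_\e\cap U_0$ is a ring homomorphism and the image is a subalgebra of $\mc[K,K^{-1}]$. This image contains $K^{\pm n}$ and $(K+K^{-1})/(\e-\e^{-1})^2$, hence all of $\mc[K^n,K^{-n},K+K^{-1}]$, and Exercise~\ref{sym}(d) provides the opposite inclusion. Combined with $\gamma_{-1}\circ\pi(Z_\e\cap U_0')=\gamma_{-1}\circ\pi(Z_\e\cap U_0)$ and the already-known injectivity on $Z_\e\cap U_0'$, this yields the asserted isomorphism; the identification $\mc[\Omega]\leftrightarrow\mc[K+K^{-1}]$ is read off from the value of $\gamma_{-1}\circ\pi(\Omega)$.

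The ``moreover'' statement is the most delicate step. Let $A\subseteq Z_\e$ denote the subalgebra generated by $E^n$, $F^n$, $K^{\pm n}$ and $\Omega$; since $Z_\e=\langle E^n,F^n,Z_\e\cap U_0'\rangle$ it suffices to prove $Z_\e\cap U_0'\subseteq A$. Given $u\in Z_\e\cap U_0'$ with image $q\in\mc[K^n,K^{-n},K+K^{-1}]$, I would invoke the elementary polynomial identity $\prod_{j=0}^{n-1}\bigl((K+K^{-1})/(\e-\e^{-1})^2-c_j\bigr)=(K^n+K^{-n}-2)/(\e-\e^{-1})^{2n}$, which follows from $\prod_{k=1}^{n}(K-\e^k)=K^n-1$ after clearing denominators, to rewrite $q$ as a polynomial $q_{<n}$ of $(K+K^{-1})$-degree at most $n-1$. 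Lifting by $K+K^{-1}\mapsto(\e-\e^{-1})^2\Omega$ produces $p:=q_{<n}(K^{\pm n},(\e-\e^{-1})^2\Omega)\in A$ of $\Omega$-degree at most $n-1$, hence $p\in U_0'$; injectivity of $\gamma_{-1}\circ\pi$ on $Z_\e\cap U_0'$ then forces $u=p\in A$. The main technical point, where I expect the most care to be needed, is this reduction--lifting argument: one must verify the image-level polynomial identity and keep track of the fact that the naive $\Omega$-lift of a degree-reduced polynomial actually lands in $U_0'$.
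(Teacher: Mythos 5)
Your proposal is correct, and it fills in details that the paper itself does not spell out: the paper states Proposition~\ref{generators} as a consequence of the preceding discussion and Exercise~\ref{sym} with no formal proof, so the comparison is really against the ``intended'' argument rather than an explicit one. Parts~(i) and the isomorphism in (ii) you read off exactly from the stated ingredients — injectivity of $\gamma_{-1}\circ\pi$ on $Z_q$ (resp. $Z_\e\cap U_0'$), the value at $\Omega$ and $K^{\pm n}$, the identity $\gamma_{-1}\circ\pi(Z_\e\cap U_0')=\gamma_{-1}\circ\pi(Z_\e\cap U_0)$, and the image computation of Exercise~\ref{sym}(d) — and that is clearly what the paper has in mind.

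Where you go beyond the paper is in the ``Moreover'' step, and you are right that some care is genuinely needed there. The naive argument — ``the isomorphism sends $K^{\pm n}$ and $\Omega$ to generators of the target, so their preimages generate'' — breaks down because $Z_\e\cap U_0'$ is \emph{not} a subalgebra (e.g.\ $\Omega^n$ has a nonzero $F^nE^n$ component and so lies outside $U_0'$), and $\mc[K^{\pm n},\Omega]\not\subseteq U_0'$ for the same reason. Your reduction–lift argument resolves this cleanly: the Harish–Chandra image of the relation~\eqref{ext} (use $\pi(E^nF^n)=0$ since $E^nF^n=F^nE^n\in FU_0E$ at $q=\e$) gives exactly the identity $\prod_{j=0}^{n-1}\bigl((K+K^{-1})/(\e-\e^{-1})^2-c_j\bigr)=(K^n+K^{-n}-2)/(\e-\e^{-1})^{2n}$, which follows from the elementary factorization $(K+K^{-1})-(\e^k+\e^{-k})=K^{-1}(K-\e^k)(K-\e^{-k})$ and $\prod_{k=1}^n(K-\e^{\pm k})=K^n-1$; this lets you truncate the $(K+K^{-1})$-degree of $\gamma_{-1}\circ\pi(u)$ below $n$, after which the $\Omega$-lift has $\Omega$-degree $<n$, hence (since each $\Omega^k$ with $k<n$ lies in $U_0'$ and left multiplication by Laurent polynomials in $K$ preserves $U_0'$) the lift lands in $Z_\e\cap U_0'$; injectivity then identifies it with $u$. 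This is precisely the technical point the paper leaves implicit, and your treatment of it is correct and, as far as I can see, essentially forced.
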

\subsubsection{Classification} The following result can be proved by elementary methods \cite[\S 2.11-2.13]{J}.
\begin{theorem} \label{class} The $\Ue$-modules $V_r^\pm$, $0\leq r\leq n-1$, and $V(\lambda,a,b)$ are simple, and any non zero simple $\Ue$-module is isomorphic to one of them.
\end{theorem}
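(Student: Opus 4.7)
The plan splits into two parts: showing each listed module is simple, and showing every simple $\Ue$-module appears. For the simplicity part, I would exploit the fact that $\e$ is a primitive $n$-th root of unity of odd order $n$, so that $K$ acts on each of $V_r^\pm$ and $V(\lambda, a, b)$ with pairwise distinct eigenvalues, giving one-dimensional weight spaces. If $W \subset V$ is a nonzero submodule, it must contain some weight vector $v_j$; then repeated application of $F$ and $E$ sweeps out all other basis vectors. The only verification needed is that the structure constants (for instance $[j][r-j+1]$ for $V_r^\pm$, or the quantities $a$, $b$ and $ab + [j](\lambda \e^{1-j} - \lambda^{-1} \e^{j-1})/(\e - \e^{-1})$ for $V(\lambda, a, b)$) do not vanish in the interior of the basis, which is an elementary $q$-integer check.

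For the converse, let $V$ be simple, hence finite-dimensional, and by Lemma \ref{scacentre} let $a$, $b$, $c$ denote the scalars by which the central elements $E^n$, $F^n$, $K^n$ act. I would treat the case $a = 0$ first: pick a highest weight vector $v$ with $Ev = 0$ and $Kv = \lambda v$, so that $V = \Ue \cdot v$ is a quotient of the Verma module $M(\lambda)_\e$. If moreover $b \ne 0$, then $F$ is invertible and $V$ has basis $v, Fv, \ldots, F^{n-1} v$, matching $V(\lambda, 0, b)$. If $b = 0$, let $r$ be largest with $F^r v \ne 0$; applying $E$ to $F^{r+1}v = 0$ via the Verma-module formula yields either $r = n-1$ (giving $V \cong V(\lambda, 0, 0)$) or $\lambda = \pm \e^r$ with $0 \le r \le n-2$ (giving $V \cong V_r^\pm$). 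The symmetric subcase $a \ne 0$, $b = 0$ is then reduced to the previous one by twisting $V$ with the Cartan automorphism $\omega$ of Exercise \ref{Cartan2}, which swaps $E$ and $F$ and therefore exchanges $(a,b)$.

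The main obstacle is the remaining case $a \ne 0$ and $b \ne 0$, where both $E$ and $F$ act invertibly and there is neither a highest nor a lowest weight vector to start from. My plan here is to observe that each $K$-weight space $V_\mu$ has the same dimension (since $E$ and $F$ intertwine them bijectively), and in particular $E$ and $F^{n-1}$ are both isomorphisms $V_\lambda \to V_{\lambda \e^2}$. I would then choose $v_0 \in V_\lambda$ to be an eigenvector, with some eigenvalue $\alpha$, of the endomorphism $(F^{n-1})^{-1} E$ of $V_\lambda$, which exists since $\mc$ is algebraically closed; this ensures $E v_0 = \alpha F^{n-1} v_0$. Then $W := \mathrm{span}\{v_0, F v_0, \ldots, F^{n-1} v_0\}$ is visibly $F$- and $K$-stable, and its $E$-stability follows from the commutation relation \eqref{comEFutile} combined with $E v_0 = \alpha F^{n-1} v_0$ and $F^n v_0 = b v_0$. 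By simplicity $W = V$, and a direct comparison of the induced $\Ue$-action with the defining formulas identifies $V$ with $V(\lambda, \alpha, b)$.
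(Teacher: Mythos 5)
Your case division---by whether the scalars giving the actions of $E^n$ and $F^n$ vanish---is the standard route, and matches the reference \cite[\S 2.11--2.13]{J} that the paper cites in lieu of giving its own proof. The eigenvector trick in the cyclic case ($a\ne 0$, $b\ne 0$) is a clean way to produce an $n$-dimensional submodule directly, sidestepping the usual a priori bound $\dim V\le n$; it is valid because $K$ is diagonalizable ($K^n$ acts by a nonzero scalar) and both $E$ and $F^{n-1}$ map $V_\lambda$ isomorphically onto $V_{\lambda\e^2}$, and your verification via \eqref{comEFutile} does recover the defining formulas of $V(\lambda,\alpha,b)$. The cases with $a=0$ are also handled correctly.

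The genuine gap is in the subcase $a\ne 0$, $b=0$. Twisting by $\omega$ does put you in the previous case, so $V^\omega\cong V(\lambda',0,a')$ for some parameters, and hence $V\cong V(\lambda',0,a')^\omega$; but you stop there, and the untwisted module need not be isomorphic to anything of the stated shape. Compare ranks: in every $V(\mu,a'',0)$ the operator $F$ has rank exactly $n-1$, while on $V(\lambda',0,a')^\omega$ the operator $F$ acts as $E$ acted on $V(\lambda',0,a')$, whose rank can be strictly smaller. For instance with $n=3$ and $\lambda'=1$ one has $Ev_1=0$ in $V(1,0,a')$ because the factor $[1](\lambda'-\lambda'^{-1})/(\e-\e^{-1})$ vanishes, so $E$ has rank $1$; the twist therefore has $F$ of rank $1$, and since $E^3$ acts by $a'\ne 0$ it is not a $V_r^{\pm}$ either. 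In fairness, this reflects an imprecision in the theorem as stated: the paper's own discussion following Definition \ref{defclas} describes a module with $F^n=0$, $E^n\ne 0$ as ``isomorphic to some $V(\lambda,0,b)$ by applying the Cartan automorphism'', i.e.\ as lying in the $\omega$-twisted family, which is not literally among the listed $V(\lambda,a,b)$. Your proof should either add the twists $V(\lambda,0,b)^\omega$ to the list, or identify precisely the $\lambda'$ for which a basis rescaling puts $V(\lambda',0,a')^\omega$ into $V(\mu,a'',0)$ form (those where no $E$-structure constant of $V(\lambda',0,a')$ vanishes). A smaller point of the same flavour: the claim that every $V(\lambda,a,b)$ is simple fails for $a=b=0$ and $\lambda=\pm\e^r$ with $0\le r\le n-2$, cf.\ \eqref{exactseq}, so your ``structure constants do not vanish'' check must exclude these.
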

\begin{remark}\label{Vnonunique} When $b\ne 0$ the module $V(\lambda,a,b)$ uniquely determines $b$ but not $\lambda$ and $a$. In fact, we get an isomorphic module by replacing $v_0$ by any other $v_i$, that is, $\lambda$ by $\lambda\e^{-2i}$ and $a$ by $a+[i](\lambda\e^{1-i}-\lambda^{-1}\e^{i-1})(\e-\e^{-1})^{-1}b^{-1}$. 
\end{remark}
By Proposition \ref{generators}, any central character $\chi=\chi_V\in {\rm Spec}(Z_\e)$ is determined by its values at the generators $x$, $y$, $z$ and $\Omega$ of $Z_\e$. Put \begin{equation}\label{coordinates} (x_{\chi},y_{\chi},z_{\chi},c_{\chi}) = (\chi(x),\chi(y),\chi(z),\chi(\Omega))\in \mc^4.\end{equation}
We have:
\begin{itemize} \item if $V=V_r^\pm$, then \begin{equation}\label{value0} \xg = 0\ ,\ \yg = 0\ ,\ \zg= \pm 1\ ,\ \cg = \pm \frac{\e^{r+1}+\e^{-1-r}}{(\e-\e^{-1})^2};\end{equation}
 \item If $V=V(\lambda,a,b)$, then 
\begin{equation}\label{value1}
\xg = -(\e-\e^{-1})^n\lambda^{-n}a\prod_{j=1}^{n-1} \left(ab+[j]\frac{\lambda\e^{1-j}-\lambda^{-1}\e^{j-1}}{\e-\e^{-1}}\right) \end{equation}\begin{equation}\label{value2}\yg = (\e-\e^{-1})^nb\ ,\ \zg=\lambda^n\ ,\ \cg = ab + \frac{\lambda\e+\lambda^{-1}\e^{-1}}{(\e-\e^{-1})^2}.\end{equation}
\end{itemize}
Set 
$$c_r^\pm = \pm \frac{\e^{r+1}+\e^{-1-r}}{(\e-\e^{-1})^2}.$$
From \eqref{exactseq} or by a direct computation, we get 
\begin{equation}\label{memecasimir} c_r^{\pm} = c_{n-r-2}^{\pm},\quad 0\leq r\leq n-2.
\end{equation}
Hence there are $n-1$ distinct values $c_r^{\pm}$ of the Casimir element, achieved at $r=0,1,\ldots,(n-3)/2$. 

It will be useful to distinguish among central characters and $\Ue$-modules by using the map \begin{equation}\label{chi}
 \varphi\colon {\rm Rep}(\Ue)\stackrel{\Xi}{\lra} {\rm Spec}(Z_\e)\stackrel{\tau}{\lra} {\rm Spec}(Z_0) \stackrel{\sigma}{\lra} PSL_2\mc^0.
\end{equation} 
Recall the big cell decomposition $PSL_2\mc^0 = U_-TU_+$. Set
$$\mathcal{D} = \{\chi_r^{\pm}\in {\rm Spec}(Z_\e)\ | \ (x_{\chi_r^{\pm}},y_{\chi_r^{\pm}},z_{\chi_r^{\pm}},c_{\chi_r^{\pm}}) = (0,0,\pm 1,c_r^{\pm}),\ 0\leq r\leq (n-3)/2\}.$$
By \eqref{value0}-\eqref{memecasimir} we have \begin{equation}\label{presing} \Xi^{-1}(\chi_r^{\pm}) = \{V_r^{\pm},V_{n-r-2}^{\pm}\}\ ,\ \varphi^{-1}({\rm Id}) = \Dd.\end{equation}
\begin{definition}\label{defclas} A simple $\Ue$-module $V$ and its central characters $\Xi(V)$ and $\tau \circ \Xi(V)$ are called:
\begin{itemize}
\item \emph{diagonal} (resp. \emph{triangular}) if $\varphi(V) \in T$ (resp. $\varphi(V)\in PB_\pm = U_-T$ or $TU_+$);
\item \emph{regular} (resp. \emph{singular}) if $\varphi(V)\ne {\rm Id}$ (resp. $\varphi(V) = {\rm Id}$);
\item \emph{regular semisimple} if the conjugacy class of $\varphi(V)$ intersects $T\setminus {\rm Id}$. 
\item \emph{cyclic} if $E^n$ and $F^n$ act as non zero scalars. 
\end{itemize} 
\end{definition}
So, a point $\chi \in {\rm Spec}(Z_\e)$ is regular if $\chi\notin \mathcal{D}$. Regular semisimple characters are sent by $\pi$ to loxodromic elements of $PSL_2\mc$; the regular diagonal $U_\e$-modules are the $V(\lambda,0,0)$s with $\lambda^n\ne \pm 1$. 

The cyclic simple $\Ue$-modules have the property that any two eigenvectors of $K$ can be obtained one from each other by applying some power of $E$ or $F$; matrix realizations are immediately derived from the formulas of $V(\lambda,a,b)$ when $ab\ne 0$. They are complementary to the highest weight modules in Rep$(\Ue)$, since a module on which $F^n$ acts as zero and $E^n$ does not is isomorphic to some $V(\lambda,0,b)$ by applying the Cartan automorphism \eqref{Cartan}. In particular, the cyclic $U_\e$-modules map under $\varphi$ to a dense subset of $PSL_2\mc$, and together with the diagonal modules they cover a neighborhood of the identity. 
\smallskip

\subsection{Quantum coadjoint action and the bundle $\Xi_M$} We are going to show that the central character map $\Xi\colon {\rm Rep}(U_\e)\ra {\rm Spec}(Z_\e)$ can be used to define a bundle $\Xi_M$ of $\Ue$-modules over ${\rm Spec}(Z_\e)$, endowed with an action of the subgroup $\Gg$ of Aut$(\hat{U}_\e)$.\smallskip

Let $V\in {\rm Rep}(U_\e)$, and $$\rho_V:U_\e\ra {\rm End}(V)$$ be the corresponding linear representation of $\Ue$. Recall \eqref{extalg} and Definition \eqref{qcagroup}. Since $Z_0$ acts by scalar operators on $V$, the same is true for $\hat{Z}_0$, which consists of holomorphic functions of $x$, $y, z^{\pm 1}\in Z_0$. Hence $V$ is naturally a $\hat{U}_{\e}$-module. Since any $\hat{U}_{\e}$-module is uniquely determined by the action of $\Ue$ on it, we will henceforth identify $\Ue$-modules and $\hat{U}_{\e}$-modules. 
\begin{definition} Given $g\in \Gg$, the \emph{twisted} $\Ue$-module $^gV$ is defined by 
 $$\rho_{^g\!V}(u) = \rho(gu),\quad u\in \Ue.$$
\end{definition}
Note that $\chi_{^g\!V} = \chi_V\circ g$ for all $g\in \Gg$ and $\chi\in {\rm Spec}(Z_\e)$, so the action of $\Gg$ on simple $\Ue$-modules lifts the opposite of the action of $\Gg$ on Spec$(Z_\e)$. \smallskip

The basic properties of the central character map $\Xi$ are given by the following result. It is a special case of \cite[\S 3.7-3.8 $\&$ Th. 4.2]{DCK} (see also \cite[\S 20]{DCP}), which applies to the quantum groups of an arbitrary complex semi-simple Lie algebra. Recall the algebras $\Ue^{\g}$ in \eqref{redalg}.
\begin{theorem}\label{moduleteo1} (i) If $\chi\in {\rm Spec}(Z_\e)\setminus \mathcal{D}$, then $\Ue^{\g} \cong M_n(\mc)$. Hence there is up to isomorphism a unique simple $U_\e$-module $V_{\chi}$ with central character $\chi$; we have $V_{\chi} \cong V(\lambda,a,b)$ for any $(\lambda,a,b)\in \mc^*\times \mc^2$ satisfying \eqref{value1}-\eqref{value2}.

(ii) If $\chi\in \mathcal{D}$, there are exactly two simple $U_\e$-modules $V_{\g}$ with central character $\chi$: if $(x_{\chi},y_{\chi},z_{\chi},c_{\chi}) = (0,0,\pm 1,c_r^\pm)$ we have $\Xi^{-1}(\g) = \{V_r^\pm,V_{n-r-2}^\pm\}$. 

(iii) The restriction map $\tau\colon {\rm Spec}(Z_\e)\ra {\rm Spec}(Z_0)$ has degree $n$, and the coordinate ring $Z_\e$ is generated by $Z_0$ and $\Omega$ subject to any one of the equivalent relations $(R_{\pm})$ given by
\begin{equation}\label{ext}
 \prod_{j=0}^{n-1}\left( \Omega - c_j^{\pm}\right) = E^nF^n+\frac{K^n+K^{-n}\mp 2}{(\e-\e^{-1})^{2n}}.
\end{equation}  
where, as usual, $c_j^{\pm} = \pm \frac{\e^{j+1}+\e^{-1-j}}{(\e-\e^{-1})^2}$.
\end{theorem}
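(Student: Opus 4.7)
The plan is (a) to derive (iii) from Theorem \ref{amainteo}(2) and extract the uniform dimension bound $\dim_{\mc}\Ue^{\g}=n^2$; (b) for $\g\notin \Dd$ to exhibit an explicit $n$-dimensional simple module in $\Xi^{-1}(\g)$, yielding (i); (c) to classify by highest weight theory in the singular case, yielding (ii).

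For (iii), Theorem \ref{amainteo}(2) furnishes exactly the relation $(R_+)$ of \eqref{ext0}, so $Z_\e\cong Z_0[\Omega]/(R_+)$ is a free $Z_0$-module of rank $n$ with basis $\{1,\Omega,\ldots,\Omega^{n-1}\}$; hence $\deg(\tau)=n$. The identity $(R_-)=(R_+)$, i.e.\ $\prod_j(\Omega+c_j^+)-\prod_j(\Omega-c_j^+)=4/(\e-\e^{-1})^{2n}$ as polynomials in $\Omega$ (with $c_j^-=-c_j^+$), reduces to a direct algebraic check using $c_r^+=c_{n-r-2}^+$ from \eqref{memecasimir} and the oddness of $n$, so either relation is an equivalent presentation. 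Combined with Theorem \ref{finite}(1), $\Ue$ is free of rank $n^2$ over $Z_\e$; for any $\g\in {\rm Spec}(Z_\e)$,
\[\dim_{\mc}\Ue^{\g}=\dim_{\mc}\bigl(\Ue\otimes_{Z_\e}(Z_\e/\ker\g)\bigr)=n^2,\]
so $\Xi^{-1}(\g)\neq\emptyset$ and any $V\in \Xi^{-1}(\g)$ has $\dim V\leq n$ by Jacobson density (its endomorphism algebra $M_{\dim V}(\mc)$ is a quotient of $\Ue^{\g}$).

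For (i), I would construct an $n$-dimensional simple $V\in\Xi^{-1}(\g)$ from the family $V(\lambda,a,b)$ whenever $\g\notin\Dd$, by solving \eqref{value1}--\eqref{value2}. Choose $\lambda$ with $\lambda^n=\zg$, set $b=\yg/(\e-\e^{-1})^n$, and when $b\neq 0$ define $a$ by $ab=\cg-(\lambda\e+\lambda^{-1}\e^{-1})/(\e-\e^{-1})^2$. The matching of $\xg$ is then automatic: the element of $\Ue$ given by $\prod_j(\Omega-c_j^+)-E^nF^n-(K^n+K^{-n}-2)/(\e-\e^{-1})^{2n}$ vanishes by $(R_+)$, and its value on the central character of $V(\lambda,a,b)$ equals its value on $\g$, so the three free coordinates $\yg,\zg,\cg$ force the fourth via $\xg\yg = \prod_j(\cg - c_j^+) - (\zg+\zg^{-1}-2)/(\e-\e^{-1})^{2n}$. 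The cases $b=0$ are handled either by the Cartan automorphism \eqref{Cartan} (when $\xg\neq 0$, to interchange the roles of $\xg$ and $\yg$) or directly by noting that $\g\notin\Dd$ and $\xg=\yg=0$ forces $(\zg,\cg)$ to avoid the singular values, in which case an appropriate $\lambda$ makes $V(\lambda,0,0)$ a simple $n$-dimensional module with central character $\g$. In every case, the surjection $\Ue^{\g}\twoheadrightarrow \mathrm{End}(V)\cong M_n(\mc)$ is forced by the dimension count to be an isomorphism, so $\Ue^{\g}\cong M_n(\mc)$ and $V$ is the unique element of $\Xi^{-1}(\g)$ up to isomorphism.

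For (ii), if $\g\in\Dd$ then $\xg=\yg=0$, so via \eqref{pref} the elements $E^n$ and $F^n$ act as zero on any $V\in\Xi^{-1}(\g)$; hence $E$ and $F$ act nilpotently, iteration of $E$ produces a highest weight vector, and $V$ is a highest weight module. By Theorem \ref{class}, $V\cong V_r^{\pm}$ for some $0\leq r\leq n-1$. The sign is read off from $\zg=\pm 1$, and the Casimir value $\cg=c_r^{\pm}$ determines $r$ only up to the symmetry $c_r^{\pm}=c_{n-r-2}^{\pm}$ of \eqref{memecasimir}, giving the two distinct modules $V_r^{\pm}, V_{n-r-2}^{\pm}$ for $0\leq r\leq (n-3)/2$. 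The main obstacle I expect is the boundary case analysis in (i), which leans essentially on (iii): the relation $(R_+)$ is what guarantees that, on regular fibers, the triple $(\yg,\zg,\cg)$ determines $\xg$, making the constructed $V(\lambda,a,b)$ land in $\Xi^{-1}(\g)$ rather than some nearby central character.
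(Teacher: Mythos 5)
There is a genuine gap at the heart of your argument for (i). You deduce from Theorem~\ref{finite}(1) (that $\Ue$ is $Z_0$-free of rank $n^3$) and from $Z_\e\cong Z_0[\Omega]/(R_+)$ being $Z_0$-free of rank $n$ that ``$\Ue$ is free of rank $n^2$ over $Z_\e$'', and hence that $\dim_\mc\Ue^{\chi}=n^2$ for \emph{every} $\chi$. This does not follow: if $R'\subset R$ is a ring extension and $M$ is free both over $R'$ and as $R$-module, nothing forces $M$ to be $R$-free, and for finitely generated modules over a finitely generated $\mc$-algebra the fiber dimension $\chi\mapsto\dim_\mc(M\otimes_R\mc_\chi)$ is merely upper semicontinuous -- it equals the generic value $n^2$ on a dense open set but may jump up elsewhere. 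That $\Ue$ is in fact a projective $Z_\e$-module of constant rank $n^2$ is a deep fact in the De Concini--Kac theory (maximal orders / Cayley--Hamilton algebras), not a one-line corollary of what the paper has established. Once this step fails, your explicit module $V(\lambda,a,b)$ only gives a surjection $\Ue^{\chi}\twoheadrightarrow M_n(\mc)$, and you cannot conclude $\Ue^{\chi}\cong M_n(\mc)$ nor uniqueness of $V_\chi$ from a dimension count.

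The paper avoids this entirely, and this is the real content of (i): for a \emph{cyclic} $\chi$ (where $E^n,F^n,K^{\pm n}$ act by nonzero scalars so that $E,F,K$ are units in $\Ue^{\chi}$), the PBW basis shows directly that any nonzero two-sided ideal of $\Ue^{\chi}$ contains $1$, so $\Ue^{\chi}$ is a simple ring, and Wedderburn applied to the faithful irreducible representation $\bar\rho_V$ ($\dim V=n$) gives $\Ue^{\chi}\cong M_n(\mc)$. Then, because the quantum coadjoint group $\Gg$ acts by automorphisms of $\hat U_\e$ preserving $Z_\e$ and because every nontrivial conjugacy class of $PSL_2\mc$ meets the set of matrices with all entries nonzero, Theorem~\ref{QCAteo} shows that every regular $\chi$ lies in the $\Gg$-orbit of a cyclic one, hence $\Ue^{\chi}\cong\Ue^{g\chi}\cong M_n(\mc)$ for all $\chi\notin\Dd$. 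Similarly, the paper's proof that $\deg\tau=n$ goes through the $\Gg$-action and regular semisimple characters rather than the presentation $Z_\e=Z_0[\Omega]/(R_+)$. Your derivation of $\deg\tau=n$ from that presentation is actually cleaner and correct, although by invoking Theorem~\ref{amainteo}(2) you are leaning on a result the paper cites from \cite{DCK} and then reproves in the body of this very theorem via the Casimir identity \eqref{Casimirrel}; a self-contained proof should reproduce that computation. Your checks on the equivalence of $(R_\pm)$ (it amounts to the identity $\prod_j(\Omega+c_j^+)-\prod_j(\Omega-c_j^+)=4/(\e-\e^{-1})^{2n}$, which holds as a Chebyshev-type polynomial identity using $n$ odd) and your argument for (ii) via highest weight vectors and Theorem~\ref{class} are fine. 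To repair (i) without the quantum coadjoint action, you would either have to prove the constancy of $\dim_\mc\Ue^{\chi}$ (a substantial theorem in its own right) or, as the paper's remark after the proof notes, settle for the weaker uniqueness statement by invoking the classification in Theorem~\ref{class} directly -- but that would not give the algebra isomorphism $\Ue^{\chi}\cong M_n(\mc)$ asserted in the statement.
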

\begin{proof} (i) Consider a cyclic central character $\chi$ (so $\chi\in {\rm Spec}(Z_\e)\setminus \mathcal{D}$). For any $V\in \Xi^{-1}(\chi)$, $\rho_V$ induces an irreducible representation $$\bar{\rho}_V:\Ue^{\chi} \ra {\rm End}(V).$$ Because $\chi$ is cyclic, dim$(V)=n$, and $\Ue^{\chi}$ has no non trivial proper ideal (by Theorem \ref{PBWteo} the unit necessarily belongs to any non zero ideal). Hence $\bar{\rho}_V$ is faithful. Then, Wedderburn's theorem implies that $\bar{\rho}_V$ is an isomorphism, that is, $\Ue^{\chi} \cong M_n(\mc)$ \cite[\S XVII.3]{L}. Conjugation with $\Gg$ yields a similar isomorphism for any module in the $\Gg$-orbit of $V$. Since any non trivial conjugacy class of $PSL_2\mc$ contains an element whose entries are all non zero, by Theorem \ref{QCAteo} the $\Gg$-orbit of any character $\chi\in {\rm Spec}(Z_\e)\setminus \Dd$ contains one which is cyclic. Hence $\Ue^{\chi} \cong M_n(\mc)$. Since $M_n(\mc)$ is a simple ring \cite[\S XVII.4 $\&$ 5]{L}, this implies the uniqueness of $V_{\chi}\in \Xi^{-1}(\chi)$ when $\chi\notin \Dd$.

(ii) is the content of \eqref{presing}. 

(iii) Because of (i), in order to prove that ${\rm deg}(\tau) = n$ it is enough to find an open subset $\Oo$ of Spec$(Z_0)$ (in the complex topology) such that for all $\tau(\chi)\in \Oo$, $(\tau\circ\Xi)^{-1}(\tau(\chi))$ consists of $n$ simple $\Ue$-modules. In fact, this is true for any regular diagonal $Z_0$-character $\tau(\chi)$, for $(\tau\circ\Xi)^{-1}(\tau(\chi))$ consists of the $V(\lambda,0,0)$ with $\lambda^n = z_{\chi}$ $(\ne \pm 1)$. Since the $\Gg$-action on Rep$(\Ue)$ preserves the isomorphism type of the fibers of $\Xi$, and in particular their dimension and number of components, it is also true for any $Z_0$-character in the $\Gg$-orbit of a regular diagonal one. Hence we can take $\Oo$ to be the set of regular semisimple $Z_0$-characters. Theorem \ref{QCAteo} implies that $\Oo$ is Zariski open and dense.

By Proposition \ref{generators} we know that $Z_\e$ is generated by $E^n$, $F^n$, $K^{\pm n}$ and $\Omega$. On another hand, \eqref{comEFutile} gives by an easy induction on $r$ the relation 
\begin{equation}\label{Casimirrel} \prod_{j=0}^{r-1}\left( \Omega - \frac{\e^{2j+1}K+\e^{-2j-1}K^{-1}}{(\e-\e^{-1})^{2}}\right)=F^rE^r.
\end{equation}
When $r=n$ this relation makes sense in $Z_\e$. To see this, let us expand the left-hand side as $\sum_{k=0}^{n} (-1)^{k}\sigma_k \Omega^{n-k}$. The coefficients $\sigma_k$ are the elementary symmetric functions of the variables $x_j =\frac{\e^{2j+1}K+\e^{-2j-1}K^{-1}}{(\e-\e^{-1})^{2}}$, $0\leq j\leq n-1$. It is classical that $\sigma_k$ can be expressed as a polynomial with rational coefficient of the power sum functions $t_i = \sum_{j=0}^{n-1} x_j^i$, $0\leq i\leq k$. Then, by using $\sum_{i=0}^{n-1} \e^{2i}=0$ it is immediate to check that all $\sigma_k$ are complex numbers not involving $K$, except $\sigma_{n} = -\frac{K^n+K^{-n}}{(\e-\e^{-1})^{2n}}$. Hence, for $r=n$ we can rewrite \eqref{Casimirrel} as 
$$\prod_{j=0}^{n-1}( \Omega - \alpha_j) = F^nE^n + \frac{K^n+K^{-n}}{(\e-\e^{-1})^{2n}}. $$ 
for some $\alpha_j\in \mc$. The relation \eqref{ext} follows from this by noting that the left-hand side (resp. right-hand side) acts as $0$ (resp. $\pm 2(\e-\e^{-1})^{-2n}$) on all the simple $\Ue$-modules $V_r^{\pm}$ if and only if $\alpha_j = c_j^{\pm}$ for all $0\leq j\leq n-1$. 
\end{proof}
\begin{remark} (i) We have seen in Theorem \ref{finite} that ${\rm rk}_{Z_0}\Ue = n^3$. From this one deduces that ${\rm deg}(\tau)= n$ and $V\in \Xi^{-1}(\chi)$ is unique for generic $\chi$, by using that ${\rm dim}(V)=n$ for regular diagonal characters $\chi$, as in the first part of the proof of Theorem \ref{moduleteo1} (iii).  

\noindent (ii) It follows directly from \eqref{value1}-\eqref{value2} and Remark \ref{Vnonunique} that $\chi\in {\rm Spec}(Z_\e)\setminus \Dd$ determines uniquely $V(\lambda,a,b)$ up to isomorphism. This implies the uniqueness of $V_{\g} \in \Xi^{-1}(\chi)$, but depends on the classification of the simple $\Ue$-modules, which is provided by Theorem \ref{class}. 
 \end{remark}
Following \cite[\S 6]{DCKP} and \cite[\S 11 $\&$ 21]{DCP}, we use Theorem \ref{moduleteo1} to provide the collection of simple regular $\Ue$-modules a structure of vector bundle over the smooth part of Spec$(Z_\e)$. 

To simplify notations, let  
$$G=PSL_2\mc.$$
It will be useful to distinguish between the Cartan subgroup $T=\mc^*$ of $H$, and its image $\bar{T} = T/(\pm 1)$ under the $2$-fold covering $\sigma\colon H\ra G^0$.

Recall that $G$ is an \textit{affine} algebraic group, since the adjoint representation ${\rm Ad}\colon G\ra {\rm Aut}(\mathfrak{g})$ identifies $G$ with a subgroup of the complex orthogonal group $SO_3\mc$ for the Killing form of $\mathfrak{g}$. Denote by $G/\!/G$ the affine variety with coordinate ring $\mc[G]^G$, the ring of regular functions on $G$ invariant under conjugation. The points of $G/\!/G$ are in one to one correspondence with the conjugacy classes of elements of $G$ having distinct traces up to sign, and we have isomorphisms
\begin{equation}\label{git1} \mc[G]^G \cong \mc[\bar{T}]^W \ ,\ G/\!/G \cong \mc^*/(z\sim \pm z^{-1}).
\end{equation}
Here $W \cong \mz/2$ is the Weyl group of $G$ acting on the torus $\bar{T}\subset G$ by inversion, and $z$ is the coordinate function of $T\subset H$. Consider the maps
\begin{equation}\label{fp}
 p_1= p\circ\sigma\colon H\lra G/\!/G\ ,\ p_2 : G/\!/G \lra G/\!/G,
\end{equation}
where $p\colon G\ra G/\!/G$ is the quotient map, and $p_2$ is induced by the $n$-th power map $g\mapsto g^n$, $g\in G$. The  \textit{fibered product} of $p_1$ and $p_2$ is the variety 
\begin{equation}\label{fibprod}  H \times_{G/\!/G} G/\!/G = \{ (h,\bar{g})\in H\times G/\!/G\ |\ p_1(h) = p_2(\bar{g})\}.
\end{equation}
\begin{theorem} \label{bundle} (i) The action of $\Gg$ on Spec$(Z_0)$ extend to Spec$(Z_\e)$ by a trivial action on Spec$(\mc[\Omega])$. Hence, if $\mathcal{O}$ is an orbit of $\Gg$ in Spec$(Z_0)$, the connected components of $\tau^{-1}\mathcal{O}$ are orbits of $\Gg$ in Spec$(Z_\e)$.

\noindent (ii) {\rm Spec}$(Z_\e)$ is a $3$-dimensional affine algebraic variety with singular set $\Dd$, and is isomorphic to $H \times_{G/\!/G} G/\!/G$.
\end{theorem}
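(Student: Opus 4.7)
My plan is as follows. For (i), the first and key step is to show that the derivations $D_a$ for $a\in Z_0$ annihilate the Casimir $\Omega$, so that $\Gg$ fixes $\Omega$ pointwise. This is immediate from the definition of $D_a$: for every value of $q$ the element $\Omega = FE + (qK+q^{-1}K^{-1})/(q-q^{-1})^2$ is central in $U_q$, so taking it as its own lift $\tilde\Omega$, we have $[\tilde a,\tilde\Omega]=0$ in $U_q$, hence $D_a(\Omega)=0$. By Theorem \ref{moduleteo1}(iii), $Z_\e$ is generated by $Z_0$ and $\Omega$ modulo $(R_\pm)$; since the left-hand side of $(R_\pm)$ is fixed by $\Gg$ and the right-hand side lies in $Z_0$, which $\Gg$ preserves by Proposition \ref{center}, the $\Gg$-action extends consistently to $Z_\e$ by acting trivially on $\mc[\Omega]$. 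For the orbit statement, observe that the right-hand side of $(R_\pm)$ equals $(z+z^{-1}-xyz\mp 2)/(\e-\e^{-1})^{2n}$, and the combination $z+z^{-1}-xyz$ equals $\mathrm{tr}(\sigma(h))$ as computed in Proposition \ref{fields}, hence is $\Gg$-invariant because $\Gg$ lifts conjugation on $G$. Thus the roots of $(R_\pm)$ in $\Omega$ are constant on each $\Gg$-orbit $\Oo\subset\mathrm{Spec}(Z_0)$, and $\tau^{-1}(\Oo)$ splits as a disjoint union of sheets $\Oo\times\{c_i\}$ indexed by the distinct roots; each sheet is a single $\Gg$-orbit (since $\Gg$ is transitive on $\Oo$ and preserves $c_\chi$), and distinct values of $c_i$ separate the sheets in the $\Omega$-direction, so they are exactly the connected components.

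For (ii), affineness and three-dimensionality of $\mathrm{Spec}(Z_\e)$ are immediate from Theorem \ref{finite}: $Z_\e$ is finitely generated and a finite extension of $Z_0$, while $\mathrm{Spec}(Z_0)\cong H\cong\mc^2\times\mc^*$ has dimension $3$. For the singular locus I apply the Jacobian criterion to the hypersurface description of $\mathrm{Spec}(Z_\e)$ inside $\mc^2\times\mc^*\times\mc$ given by $R_+(x,y,z,\Omega):=P_+(\Omega)-f_+(x,y,z)=0$, with $P_+(T)=\prod_j(T-c_j^+)$ and $f_+=(z+z^{-1}-xyz-2)/(\e-\e^{-1})^{2n}$. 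The partial derivatives $\partial_x R_+=yz/(\e-\e^{-1})^{2n}$ and $\partial_y R_+=xz/(\e-\e^{-1})^{2n}$ vanish iff $x=y=0$; then $\partial_z R_+=-(1-z^{-2})/(\e-\e^{-1})^{2n}$ forces $z=\pm 1$; and $\partial_\Omega R_+=P_+'(\Omega)=0$ combined with $R_+=0$ (which at $x=y=0$ reads $P_+(\Omega)=(z+z^{-1}-2)/(\e-\e^{-1})^{2n}$, vanishing only for $z=1$) forces $z=1$ and $\Omega$ to be a double root of $P_+$. By \eqref{memecasimir} these are $\Omega=c_r^+$ for $0\leq r\leq (n-3)/2$, i.e.\ the characters $\chi_r^+$. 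The same analysis with $(R_-)$ produces the $\chi_r^-$, so the singular set is exactly $\Dd$.

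Finally, to identify $\mathrm{Spec}(Z_\e)$ with $H\times_{G/\!/G}G/\!/G$, the pivotal computation is the identity
\[
\prod_{j=0}^{n-1}\!\left(\frac{\mu+\mu^{-1}}{(\e-\e^{-1})^2}-c_j^+\right)=\frac{\mu^n+\mu^{-n}-2}{(\e-\e^{-1})^{2n}},
\]
obtained by factoring each term as $\mu^{-1}(\mu-\e^{j+1})(\mu-\e^{-j-1})/(\e-\e^{-1})^2$ and using $\prod_{\omega^n=1}(\mu-\omega)=\mu^n-1$. Substituting $\mu+\mu^{-1}=(\e-\e^{-1})^2\Omega$ reduces $(R_+)$ to $\mu^n+\mu^{-n}=z+z^{-1}-xyz=\mathrm{tr}(\sigma(h))$, which is precisely the fibered-product compatibility $p_1(h)=p_2([\mu])$ (recall $p_1=p\circ\sigma$ records the conjugacy-class trace and $p_2$ is the $n$-th power $\mu\mapsto\mu^n$ on a torus parameter). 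The map $\chi\mapsto(\psi\circ\tau(\chi),[\mu_\chi])$, with $\mu_\chi+\mu_\chi^{-1}=(\e-\e^{-1})^2c_\chi$, is therefore a well-defined morphism into $H\times_{G/\!/G}G/\!/G$, and it is bijective by Theorem \ref{moduleteo1}(iii), which parametrizes $\mathrm{Spec}(Z_\e)$ by exactly the solutions of $(R_+)$. The main obstacle I anticipate is the bookkeeping needed to reconcile the two relations $(R_\pm)$ and the sign ambiguity in the $2$-to-$1$ covering $\sigma$; once the trace identity above is in hand, this is routine.
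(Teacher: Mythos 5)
Your proof of part~(i) matches the paper's: the single essential observation, that $\Omega\in Z_q$ for \emph{all} $q$ so that $[\tilde a,\tilde\Omega]=0$ in $U_q$ and hence $D_a(\Omega)=0$, is exactly what the paper invokes. You go further and supply an explicit argument for the orbit statement (constancy of $\mathrm{tr}(\sigma(h))=z+z^{-1}-xyz$ along a $\Gg$-orbit, so the roots in $\Omega$ do not move and the sheets of $\tau^{-1}\Oo$ separate into components), which the paper leaves implicit; this is correct and helpful.

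In part~(ii) there is a genuine logical gap in the Jacobian analysis. From $\partial_\Omega R_+=P_+'(\Omega)=0$ you conclude that $\Omega$ is a double root of $P_+$, hence $P_+(\Omega)=0$ and then $z=1$. But $P_+'$ has degree $n-1$, while $P_+$ has only $(n-1)/2$ double roots; the other $(n-1)/2$ zeros of $P_+'$ are \emph{not} zeros of $P_+$, and you must treat the case $z=-1$, $P_+(\Omega)=-4/(\e-\e^{-1})^{2n}$. The appeal to ``the same analysis with $(R_-)$'' does not repair this: since $(R_+)$ and $(R_-)$ are equivalent defining relations of the \emph{same} hypersurface, applying the Jacobian criterion to either one must already produce the entire singular locus, and a correct treatment of $(R_+)$ alone has to exhibit the $\chi_r^-$ as well. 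The fix is short: from $(R_+)-(R_-)$ one gets the identity $P_+(\Omega)-P_-(\Omega)=-4/(\e-\e^{-1})^{2n}$, so $P_+'=P_-'$, and the extra zeros of $P_+'$ are precisely the double roots $c_r^-$ of $P_-$, at which $P_+(c_r^-)=-4/(\e-\e^{-1})^{2n}$; these give the singular points with $z=-1$. (The paper sidesteps all this by simply citing \eqref{memecasimir} and asserting the singularities are quadratic, so your computation, once repaired, is actually more explicit.)

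For the fibered-product identification your route differs genuinely from the paper's. The paper works entirely on the level of commutative rings, writing $Z_\e = Z_0\otimes_{Z_0\cap\mc[\Omega]}\mc[\Omega]$ and identifying $Z_0\cap\mc[\Omega]\cong\mc[T/\mu_n]^W$ and $\mc[\Omega]\cong\mc[T]^W$ via the Harish--Chandra homomorphism, then dualizing. You instead construct the morphism $\chi\mapsto(\psi\circ\tau(\chi),[\mu_\chi])$ directly and verify that $(R_+)$, rewritten via the factorization $\prod_j\bigl(\tfrac{\mu+\mu^{-1}}{(\e-\e^{-1})^2}-c_j^+\bigr)=\tfrac{\mu^n+\mu^{-n}-2}{(\e-\e^{-1})^{2n}}$, becomes precisely the compatibility $p_1(h)=p_2([\mu])$. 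Both arguments are sound; yours is more concrete and makes the role of the $n$-th power map $p_2$ transparent, while the paper's ring-theoretic version makes the tensor-product (pullback) structure manifest without coordinates and handles the passage from $T$ to $\bar T$ uniformly using that $n$ is odd.
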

\begin{proof} (i) The claim follows from Proposition \ref{generators} (ii) and the fact that for all $q$ we have $\Omega\in Z_q$, so that $D_a(\Omega)=0$ for all $a\in Z_0$.  

\noindent (ii) According to Theorem \ref{moduleteo1} (iii), {\rm Spec}$(Z_\e)$ is the hypersurface in $\mc^4$ given in coordinates $(e,y,z,c)$ by (see \eqref{pref}-\eqref{prefbis})
\begin{equation}\label{ext2}
 (\e-\e^{-1})^{2n}\prod_{j=0}^{n-1}\left( c - c_j^{\pm}\right) = ey+ z+z^{-1}\mp 2.
\end{equation} It follows from \eqref{memecasimir} that the singularities of {\rm Spec}$(Z_\e)$ are quadratic and coincide with $\Dd$ (note, in particular, that $(0,0,\pm 1,c_{n-1}^{\pm})$ is a smooth point).

Let us prove now the isomorphism with \eqref{fibprod}. By Proposition \ref{generators} (ii) we have
$$Z_\e =  Z_0\otimes_{Z_0\cap \mc[\Omega]} \mc[\Omega],$$
and $\mc[\Omega]\cong \mc[K+K^{-1}]$. Let us identify $\mc[K^{\pm 1}]$ with $\mc[T]$, the coordinate ring of the torus $T$ of $H$. Then $\mc[\Omega]\cong \mc[T]^W$. By Proposition \ref{center}, $Z_0 \cong \mc[H]$. The Harish-Chandra homomorphism applied to \eqref{ext} gives $Z_0 \cap \mc[\Omega]  \cong \mc[K^n+K^{-n}]$. On another hand, $\mc[K^n+K^{-n}]\cong \mc[T/\mu_n]^W$, where $\mu_n$ is the subgroup of $T$ of $n$-th roots of unity. (Alternatively, Theorem \ref{QCAteo} gives $Z_0^{\Gg} = \mc[T/\mu_n]^W$, where $\mc[T/\mu_n]^W$ is identified with the subalgebra of $Z_0$ generated by $z+z^{-1}$, and so $Z_0 \cap \mc[\Omega]  = \mc[T/\mu_n]^W$ by (i) above). Hence
$$Z_\e\cong \mc[H] \otimes_{\mc[T/\mu_n]^W} \mc[T]^W.$$
Since $n$ is odd, this is equivalent to
$$Z_\e\cong  \mc[H] \otimes_{\mc[\bar{T}/\mu_n]^W} \mc[\bar{T}]^W.$$
The isomorphism of Spec$(Z_\e)$ with $H \times_{G/\!/G} G/\!/G$ follows by duality.
\end{proof} 
Denote by $X = {\rm Spec}(Z_\e)\setminus \Dd$ the subset of regular central characters. By Theorem \ref{bundle}, it coincides with the smooth part of Spec$(Z_\e)$. 

Take a point $\chi\in X$. Any choice of parameters $(\lambda,a,b)$ as in \eqref{value1}-\eqref{value2} determines an isomorphism of $\Ue^{\chi}$-modules $V_\chi \cong V(\lambda,a,b)$, and an isomorphism of algebras $\Ue^{\chi} \cong M_n(\mc)$ by identifying $V(\lambda,a,b)$ with $\mc^n$. Both isomorphisms are varying smoothly with respect to $(\lambda,a,b)$ over a sufficiently small open neighborhood $O_{\chi}$ of $\chi$ in $X$. Hence we have coordinate charts $\textstyle g_{O_{\chi}}\colon \coprod_{\rho\in O_\chi} \Ue^{\rho} \ra O_{\chi} \times M_n(\mc)$ with smooth transition functions $$g_{O_{\chi}}\circ g_{O_{\chi'}}^{-1}\colon (O_{\chi}\cap O_{\chi'}) \times M_n(\mc) \ra (O_{\chi}\cap O_{\chi'}) \times M_n(\mc)$$ that restrict to algebra automorphisms on the second component. By taking a locally finite covering of $X$ made of neighborhoods $O_\chi$ we thus obtain a trivializing atlas for a smooth vector bundle of matrix algebras 
\begin{equation}\label{matrixbundle} \Xi_A \colon A_\e \ra X,
\end{equation} 
where $\textstyle A_\e = \coprod_{\chi \in X} \Ue^{\chi}$ as a set. By a ``smooth bundle of algebras with unit'' we mean that a smooth product with unit is defined on sections. For such bundles the functions on the base can be identified with multiples of the unit section. 

Similarly, we have a rank $n$ vector bundle 
$$\Xi_M \colon M_\e \ra X$$ 
with fibers the $\Ue$-modules $\Xi_M^{-1}(\chi) = V_\chi$. Clearly, $\Xi_A$ and $\Xi_M$ are associated bundles via the action of $\Ue^{\chi}$ on $V_\chi$. They are topologically non trivial, since by using Remark \ref{Vnonunique} we can find a loop in Spec$(Z_\e)\setminus \Dd$ with non trivial holonomy. Namely, put $\e := e^{\frac{2\sqrt{-1}\pi d}{n}}$. For $b\ne 0$, the path in $\mc^*\times \mc^2$ given by
$$\gamma\colon t\longmapsto \left(\lambda e^{-\frac{4\sqrt{-1}\pi d t}{n}},a+t\frac{\lambda-\lambda^{-1}}{b(\e-\e^{-1})},b\right),\quad t\in [0,1],$$ 
projects to a loop in Spec$(Z_\e)$ with holonomy in $\Xi_M$ the permutation matrix $(\delta_{i+1,j})$ (indices mod $n$).
\begin{theorem}\label{bundle2} The group $\Gg \subset {\rm Aut}(\hat{U}_\e)$ acts on $\Xi_A$ and $\Xi_M$ by bundle morphisms. 
\end{theorem}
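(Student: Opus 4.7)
The plan is to derive both bundle morphisms directly from the action of $\Gg$ on $\hat U_\e$. First I would verify that $X = {\rm Spec}(Z_\e) \setminus \mathcal{D}$ is $\Gg$-invariant. By Theorem \ref{bundle}(i), the $\Gg$-action on ${\rm Spec}(Z_\e)$ is trivial on the $\Omega$-coordinate and lifts the action on ${\rm Spec}(Z_0)$; by Theorem \ref{QCAteo}(ii), the preimage $\sigma^{-1}(\{{\rm id}\})$ is fixed pointwise by $\Gg$. Since the points of $\mathcal{D}$ have vanishing $(x,y)$-coordinates and $z = \pm 1$, they project to $\sigma^{-1}(\{{\rm id}\})$, and their $c$-coordinate is also fixed; hence $\mathcal{D}$ is pointwise $\Gg$-fixed, $X$ is preserved, and $\Gg$ acts on $X$ by the holomorphic transformations $\phi_g(\chi) = \chi \circ g^{-1}$.

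For the algebra bundle $\Xi_A$, the key point is that any $g \in \Gg$ maps the two-sided ideal $\mathcal{I}^{\chi}$ of $\hat U_\e$ to $\mathcal{I}^{\phi_g(\chi)}$: since $g$ preserves $\hat Z_\e$, for any $z \in \hat Z_\e$ we have $g(z - \chi(z)) = g(z) - \phi_g(\chi)(g(z)) \in \ker(\phi_g(\chi))$. Hence $g$ descends to isomorphisms of matrix algebras $\bar g_\chi \colon \hat U_\e^{\chi} \to \hat U_\e^{\phi_g(\chi)}$, and by the explicit formulas of Proposition \ref{qca}, the family $\{\bar g_\chi\}_{\chi \in X}$ depends holomorphically on $\chi$. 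Assembling them gives a bundle morphism $\tilde g_A \colon A_\e \to A_\e$ lifting $\phi_g$. The group law $(\widetilde{gh})_A = \tilde g_A \circ \tilde h_A$ is inherited from $\Gg \subset {\rm Aut}(\hat U_\e)$.

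For the module bundle $\Xi_M$, I would exploit that $\Xi_M$ is tautologically associated to $\Xi_A$. Over any trivializing open $O_{\chi} \subset X$, identifying each fiber $\hat U_\e^{\rho}$ with $M_n(\mc)$, the corresponding fibers $V_\rho$ identify with $\mc^n$ via the standard module structure. Under such local identifications on source and target charts, $\bar g_\rho$ is realized as a holomorphic family of algebra automorphisms of $M_n(\mc)$, each of which is inner by Skolem--Noether, implemented by an element of $GL_n(\mc)$ unique up to a scalar. Normalizing this scalar --- for instance by requiring preservation of the distinguished cyclic vector $v_0 \in V(\lambda,a,b)$ coming from the holomorphic parametrization $\rho \mapsto (\lambda(\rho), a(\rho), b(\rho))$ used to trivialize $\Xi_M$ near $\chi$ --- produces a holomorphic local bundle map compatible with $\tilde g_A$.

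The main obstacle is checking that these local choices patch into a globally well-defined bundle morphism $\tilde g_M \colon M_\e \to M_\e$. This is resolved by the uniqueness up to scalar of the simple $U_\e^{\phi_g(\rho)}$-module $V_{\phi_g(\rho)}$ (Theorem \ref{moduleteo1}(i)): any two normalizations of the Skolem--Noether lift differ by a holomorphic scalar cocycle that is absorbed into the transition functions defining $\Xi_M$. The group-action identity $(\widetilde{gh})_M = \tilde g_M \circ \tilde h_M$ then follows from the corresponding identity for $\tilde g_A$ together with the equivariance built into the associated-bundle construction.
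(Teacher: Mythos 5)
Your proof takes essentially the same route as the paper's, which is in fact far more terse: the paper's entire argument is that $g^{-1}\in\Gg$ maps $\mathcal I^\chi$ isomorphically to $\mathcal I^{g.\chi}$, hence $\Ue^\chi$ to $\Ue^{g.\chi}$ and $V_\chi$ to $V_{g.\chi}$, and these assemble into bundle morphisms. You supply two useful elaborations the paper omits: the preliminary check (via Theorems \ref{bundle}(i) and \ref{QCAteo}(ii)) that $\mathcal D$ is pointwise fixed so that $X$ is preserved, and an explicit Skolem--Noether analysis of how the algebra-level isomorphism is implemented on the fibers of $\Xi_M$.

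One caveat on the $\Xi_M$ half. Your proposed normalization --- ``requiring preservation of the distinguished cyclic vector $v_0$'' --- cannot be taken literally, since the Skolem--Noether operator $P_\rho\in GL_n(\mc)$ implementing $\bar g_\rho$ will in general send $v_0$ to a vector not proportional to $v_0$, so no scalar rescaling makes $P_\rho v_0=v_0$. What works (and is in the spirit of the paper, cf.\ the remark after \eqref{twistcop} where $K(\rho,\mu)$ is pinned down by $\det=1$) is either to normalize so that the $v_0$-component of $P_\rho v_0$ equals $1$, which is a genuine open condition varying holomorphically, or to impose $\det P_\rho=1$, which fixes $P_\rho$ up to $n$th roots of unity. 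Your subsequent claim that the residual scalar cocycle ``is absorbed into the transition functions defining $\Xi_M$'' is also a bit quick: the cocycle lives in $H^1(X,\mathcal O_X^*)$ and absorbing it amounts to replacing $\Xi_M$ by a twist, which is a change of the bundle rather than the construction of a morphism of the given one. The paper itself does not engage with this level of detail, so your reasoning is consistent with its intent, but the patching step as you wrote it is the one place where a reader might reasonably ask for more care.
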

In particular, the orbits of $\Gg$ correspond to symplectic leaves (resp. conjugacy classes) of Spec$(Z_\e)\setminus \Dd$ (resp. $PSL_2\mc^0$).
\smallskip

\begin{proof} Since by Proposition \ref{qca} the group $\Gg$ maps the algebra of functions on Spec$(Z_\e)$ into itself, its acts linearly on the fibers of $\Xi_A$ and $\Xi_M$ by automorphisms of algebras and $\Ue$-modules, respectively. More precisely, for any $g\in \Gg$ and $\chi\in {\rm Spec}(Z_\e)$, the action of $g^{-1}$ on $\hat{U}_\e$ maps isomorphically the ideal $\mathcal{I}^{\chi}$ to $\mathcal{I}^{g.\chi}$, and hence the algebra $\Ue^{\chi}$ to $\Ue^{g.\chi}$ and its simple module $V_\chi$ to $V_{g.\chi}$.\end{proof}

\begin{remark} (a) In the terminology of \cite{RVW}, Theorem \ref{bundle2} reflects the fact that the pair $(\Ue,Z_0)$ is a \textit{Poisson fibered algebra}. The infinitesimal action of $\Gg$ on $\Xi_A$ defines a morphism of vector bundles $D\colon \Omega^1X\ra \Aa_{\e}$ with non trivial curvature, where $\Aa_{\e}$ is the bundle of first order differential operators on $\Xi_A$ with symbols ${\rm Id}\otimes \xi$, $\xi \in TX$. 

\noindent (b) The bundle $\Xi_M$ extends to the whole of Spec$(Z_\e)$, with singular (non simple) fibers $V(\pm \e^r,0,0)$, $0\leq r\leq (n-3)/2$, over $\Dd$.
\end{remark}
\section{Intertwinners: the category $\Ue$-Mod}\label{CGO}
The Hopf algebra structure of $\Ue$ endows the category $\Ue$-Mod of finite dimensional left $\Ue$-modules with a tensor product and a duality. We are going to describe them in geometric terms when applied to \textit{regular} $\Ue$-modules, by using the theorems \ref{bundle} and \ref{bundle2}. To clarify the picture, after some preliminaries  we recall well-known results on a subcategory based on singular modules and generating the Reshetikhin-Turaev TQFT.
\subsection{A few basic definitions \cite{K}} Unless stated otherwise all the modules we are going to consider will be left modules, finite dimensional over $\mc$.

The tensor product vector space $V\otimes W$ of two $\Ue$-modules is naturally a $\Ue \otimes \Ue$-module. It is made into a $\Ue$-module by setting
\begin{equation}\label{tensprod} a.(v\otimes w) = \Delta(a).(v\otimes w)\end{equation} for all $a\in\Ue$, $v\in V$, $w\in W$, where $\Delta(a)\in \Ue \otimes \Ue$ is the coproduct of $a$. This action is compatible with the natural tensor product of linear maps of $\Ue$-modules, so it defines a bifunctor $\otimes\colon$ $\Ue$-Mod $\times$ $\Ue$-Mod $\rightarrow$ $\Ue$-Mod.

Consider the trivial action of $\Ue$ on $\mc$ given by the counit, $a.z = \eta(a)z$. By \eqref{coass}, for all $\Ue$-modules $U,V,W$ the formula \eqref{tensprod} turns 
the canonical isomorphisms of vector spaces $l_V\colon \mc\otimes V \cong V$, $r_V\colon V \otimes \mc\cong V$ and \begin{equation}\label{associator} a_{U,V,W}\colon (U\otimes V)\otimes W \stackrel{\cong}{\lra} U\otimes (V\otimes W)
\end{equation} 
into isomorphisms of $\Ue$-modules which are natural with respect to morphisms of $\Ue$-modules. Hence they define \textit{natural isomorphisms} of functors $l$,$r$ and $a$, with obvious compatibility relations. In particular    
\begin{equation}\label{nat-trans} a\colon \otimes (\otimes \times {\rm id})\longrightarrow \otimes ({\rm id}\times \otimes)\end{equation}
makes the following \textit{Pentagonal Diagram} commutative: 
\begin{equation}\label{pentdiag} \xymatrix{(U\otimes (V\otimes W))\otimes X \ar[dd]^{a_{U,V\otimes W,X}} & & ((U\otimes V)\otimes W)\otimes X \ar[ll]_{a_{U,V,W}\otimes {\rm id}_X} \ar[d]^{a_{U\otimes V,W,X}} \\ & & (U\otimes V)\otimes (W\otimes X) \ar[d]^{a_{U,V,W\otimes X}}\\ U\otimes ((V\otimes W)\otimes X) \ar[rr]^{{\rm id}_U\otimes a_{V,W,X}} & & U\otimes (V\otimes (W\otimes X)).}\end{equation}
The category $\Ue$-Mod endowed with $(\otimes,a,l,r)$ is an exemple of \textit{tensor category}, with \textit{associativity constraint} $a$ and \textit{unit} $\mc$. Note that $a$ explicits the different module structures at both sides.

The category $\Ue$-Mod has also a \textit{(left) duality}, that is a pair $(b,d)$ of natural transformations given on any $\Ue$-module $V$ by morphisms
\begin{equation}\label{duality} \begin{array}{cccc} d_V\colon & V^*\otimes V& \longrightarrow & \mc\\ b_V\colon & \mc & \rightarrow & V\otimes V^*,\end{array}
\end{equation}
where $V^*$ is an $\Ue$-module to be specified, satisfying 
$$({\rm id}_V \otimes d_V)(b_V \otimes {\rm id}_V) = (d_V \otimes {\rm id}_{V^*}) ({\rm id}_{V*} \otimes b_V) = {\rm id}_{V^*}.$$
Naturally we put $V^*={\rm Hom}_{\mc}(V,\mc)$, the dual linear space, and define $d_V$ and $b_V$ as the canonical pairing of vector spaces between $V$ and $V^*$, and the map taking $1\in \mc$ to $\sum_i v_i\otimes v^i$, where $\{v_i\}$ and $\{v^i\}$ are dual basis of $V$ and $V^*$. Then a (\textit{left}) $\Ue$-module structure is defined on $V^*$ by
\begin{equation}\label{dualmod}
d_V((a.\xi)\otimes v) = d_V(\xi\otimes (S(a).v))\end{equation}
for all $a\in\Ue$, $v\in V$ and $\xi\in V^*$ (recall that the antipode $S\colon \Ue \ra \Ue$ is an \textit{anti}automorphism). By using \eqref{antipode} one can check that $d_V$ and $b_V$ are $\Ue$-linear maps. Any $\Ue$-linear map $f\colon V\rightarrow W$ has a \textit{transpose} $f^*\colon W^*\rightarrow V^*$ given by
$$f^* = (d_W\otimes {\rm id}_{V^*})({\rm id}_{W^*}\otimes f\otimes {\rm id}_{V^*})({\rm id}_{W^*}\otimes b_V),$$ so that $^*$ defines a contravariant functor $\Ue$-Mod $\rightarrow$ $\Ue$-Mod. As in the case of linear spaces there are natural bijections ${\rm Hom}_{\Ue}(U\otimes V,W) \cong {\rm Hom}_{\Ue}(U,W\otimes V^*)$ and ${\rm Hom}_{\Ue}(U^*\otimes V,W) \cong {\rm Hom}_{\Ue}(V,U\otimes W)$. Moreover, we get from \eqref{copS} an isomorphism of $\Ue$-modules \begin{equation}\label{tensdual} V^*\otimes W^*\cong (W\otimes V)^*.\end{equation}
More generally, the vector space ${\rm Hom}_{\mc}(V,W)$ is an $\Ue$-module by setting
\begin{equation}\label{Hom-mod} a.f(v) = \sum_i a_i'.f(S(a_i'').v)
\end{equation}
for all $f\in {\rm Hom}_{\mc}(V,W)$ and $v\in V$, where we put $\Delta(a) = \sum_i a_i'\otimes a_i''$. Note that by counitality in \eqref{coass}, the action \eqref{Hom-mod} reduces to \eqref{dualmod} when $W=\mc$, and \eqref{tensprod} and \eqref{dualmod} imply that the canonical $\mc$-linear isomorphism
\begin{equation}\label{isoHom} \fonc{\lambda_{V,W}}{W\otimes V^*}{{\rm Hom}_{\mc}(V,W)}{w\otimes \xi}{\left(v\mapsto \xi(v) w\right)}
\end{equation}
is an isomorphism of $\Ue$-modules. We have $\lambda_{V,W}^{-1}(f) =(f\otimes {\rm id}_{V^*})b_V$. 
\smallskip

Duality allows one to define a trace in the category. For that, we use the remarkable fact that the square of the antipode $S$ is an inner automorphism: for all $u\in \Ue$ we have $S^2(u) = KuK^{-1}$.  Then, the linear isomorphism
$$\fonc{\Phi_V}{V}{V^{**}}{v}{d_V(\ \cdot \otimes K.v)}$$
is also $\Ue$-linear. The \textit{quantum trace} of $V$ is the $\Ue$-linear map
$$\xymatrix{{\rm tr}_q\colon {\rm End}_{\mc}(V)\ar[r]^{\ \quad \lambda_{V,V}^{-1}} & V\otimes V^* \ar[rr]^{\Phi_V\otimes {\rm id}_{V^*}} & & V^{**}\otimes V^*\ar[r]^{\ \quad d_{V^*}} & \mc.}$$
Explicitly, for all $f\in {\rm End}_{\mc}(V)$ we have
\begin{equation}\label{qtr}
 {\rm tr}_q(f) = {\rm tr}\left(v\mapsto K.f(v)\right),
\end{equation}
where tr is the usual trace map of linear spaces. The \textit{quantum dimension} of $V$ is ${\rm dim}_q(V) = {\rm tr}_q({\rm id}_V)$. We say that $V$ has \textit{trace zero} if for all $\Ue$-linear endomorphisms $f$ of $V$ we have
$${\rm tr}_q(f)=0.$$
Since $\Delta(K) = K\otimes K$ ($K$ is said to be \textit{group like}), the quantum trace is multiplicative: for all $f\in {\rm End}_{\mc}(V)$, $g\in {\rm End}_{\mc}(W)$ the quantum trace of $f\otimes g\in {\rm End}_{\mc}(V\otimes W)$ is \begin{equation}\label{qtrmult} {\rm tr}_q(f\otimes g) = {\rm tr}_q(f){\rm tr}_q(g).\end{equation}
\begin{xca} (a) Check that all the $\Ue$-linear maps above are indeed $\Ue$-linear, and that the isomorphism $\lambda_{(V,W)}: W^*\otimes V^*\ra (V\otimes W)^*$ in \eqref{tensdual} can be decomposed as
$$\lambda_{(V,W)} = (d_W\otimes {\rm id}_{(V\otimes W)^*})({\rm id}_{W^*}\otimes d_V\otimes {\rm id}_{W}\otimes {\rm id}_{(V\otimes W)^*})({\rm id}_{W^*}\otimes {\rm id}_{V^*}\otimes b_{V\otimes W}).$$

(b) {\it (Right duality)} Show that we have $\Ue$-linear maps $d'_V\colon V\otimes ^*\!\! V\rightarrow \mc$ and $b'_V\colon \mc \rightarrow ^*\!\!\! V\otimes V$ analogous to \eqref{duality} but with tensorands permuted, where $^*V$ is defined by \eqref{dualmod} with $d_V$ and $S$ replaced by $d'_V$ and $S^{-1}$. Show that $\xi \mapsto K^{-1}.\xi$ defines an isomorphism of $\Ue$-modules $V^*\longrightarrow ^*\!\! V$, and that ${\rm tr}_q(f) = d_V'(f\otimes {\rm id}_{V^*})b_V$.   
\end{xca}

\subsection{The modular category $\bar{U}_q$-Mod} \label{singcat} The \textit{singular} simple $\Ue$-modules $V_r^{\pm}$ generate a remarkable subcategory of $\Ue$-Mod which has been described in \cite{RT} (in the simply-connected version of $\Ue$, see Remark \ref{simply-connectedUq}).
\smallskip

For all $0\leq r\leq n-1$, the action of $\Ue$ on $V_r^\pm$ factors through the \textit{restricted quantum group}  
\begin{equation}\label{Ures}\bar{U}_\e = \Ue/(E^n=F^n=0, K^{2n}=1).
\end{equation}
The algebra $\bar{U}_\e$ is \textit{finite dimensional}, \textit{not semisimple}, and has for simple modules the \textit{finite set} $\{V_r^\pm, r=0,\ldots,n-1\}$. Note that $V_0^{+} = \mc$ (the trivial module), and \begin{equation}\label{qdimsing}\mbox{if} \ r\leq n-2, \quad {\rm dim}_q(V_r^{\pm})= \pm [r+1]\ne 0.\end{equation}
On another hand, ${\rm dim}_q(V_{n-1}^{\pm})= 0$. Hence, by Schur's lemma, ${\rm tr}_q(f) = 0$ for all $\Ue$-linear endomorphisms $f$ of $V_{n-1}^{\pm}$, which has thus a special status among singular modules, reminiscent of the fact that $V_{n-1}^{\pm} = V(\pm \e^{-1},0,0)$.
\begin{definition} A color is an integer $r$ such that $0\leq r \leq n-2$.
\end{definition}
 There is a unique Hopf algebra structure on $\bar{U}_\e$ such that the quotient map $\Ue \ra \bar{U}_\e$ is a morphism of Hopf algebras (see eg. \cite[Prop. IX.6.1]{K}). Since $V_r^{-} \cong V_0^-\otimes V_r^+$, we can concentrate on the modules $V_r := V_r^+$. They are \textit{self dual}, and satisfy:  

\begin{theorem} \label{singCG} \cite[Th. 8.4.3]{RT} For any colors $i,j$ there is a unique trace zero submodule $Z_{i,j}$ of $V_i\otimes V_j$ such that
\begin{equation}\label{formCG} V_i\otimes V_j \cong \left(\oplus_{k} V_k \right)\oplus Z_{i,j},
\end{equation}
where the sum is over all colors $k$ such that the triple $(i,j,k)$ is $\e$-\emph{admissible}, that is:
\begin{itemize}
\item $i+j+k \in 2\mz$ and $i+j+k\leq 2(n-2)$;
\item $|i-j|\leq k\leq i+j$ (the \emph{triangle inequalities}).
\end{itemize}
When $i+j\leq n-2$ we have $Z_{i,j}=\emptyset$.
\end{theorem}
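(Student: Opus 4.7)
My plan is to reduce to the classical quantum Clebsch--Gordan rule for $U_qsl_2$ and then specialize carefully to $q=\e$. At generic $q$ one has $V_{i,q}\otimes V_{j,q}\cong\bigoplus_{k} V_{k,q}$ with $k$ ranging over $|i-j|\leq k\leq i+j$ of the correct parity, and each summand is generated by an explicit highest-weight vector $w_{k,q}\in V_i\otimes V_j$ given by the standard $q$-Clebsch--Gordan formula, i.e.\ a finite sum $\sum c_{a,b}\,v_a\otimes v_b$ with coefficients built from $q$-binomials. First I would specialize these $w_{k,q}$ to $q=\e$ after a suitable normalization, and verify that the normalization remains regular precisely when $(i,j,k)$ is $\e$-admissible, i.e.\ when $k\leq n-2$ and $i+j+k\leq 2(n-2)$. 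For admissible $k$ the resulting vector $w_k$ generates a highest-weight $\bar U_\e$-module of weight $\e^k$; since $k\leq n-2$ and $F^n=0$ in $\bar U_\e$, this module is the simple $V_k$.

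Next I would define $Z_{i,j}$ as a direct-sum complement of $\bigoplus_{k\text{ admissible}}V_k$ inside $V_i\otimes V_j$. A dimension count shows $Z_{i,j}=0$ iff $i+j\leq n-2$. To describe its $\bar U_\e$-module structure I would apply Krull--Schmidt together with the structure theory of highest-weight $\bar U_\e$-modules: via the exact sequence \eqref{exactseq}, each indecomposable summand of $Z_{i,j}$ is either the simple $V_{n-1}$ or an indecomposable whose two composition factors are $V_r$ and $V_{n-2-r}$ for some color $r$.

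The trace-zero statement then follows from a short computation. Since $n$ is odd and $\e^n=1$, one gets $[n-1-r]=-[r+1]$, hence $\dim_q V_r+\dim_q V_{n-2-r}=[r+1]+[n-1-r]=0$; also $\dim_q V_{n-1}=[n]=0$. For any $\bar U_\e$-endomorphism $f$ of an indecomposable summand $M$ of $Z_{i,j}$, Schur's lemma forces $f$ to act as a common scalar $\lambda$ on the composition factors modulo a nilpotent part, so $\mathrm{tr}_q(f)=\lambda\dim_q(M)+0=0$. Uniqueness of $Z_{i,j}$ is then immediate: the admissible $V_k$ are singled out canonically as the isotypic components of nonzero quantum dimension, so the trace-zero complement is forced. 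The main technical obstacle is controlling the explicit vectors $w_{k,q}$ near $q=\e$ for non-admissible $(i,j,k)$, where the $q$-binomial normalizations develop zeros and poles; one must renormalize carefully so that the resulting submodules of $V_i\otimes V_j$ glue exactly into the extension pattern $V_r$--$V_{n-2-r}$ described above, rather than producing exotic indecomposables.
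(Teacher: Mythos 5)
The paper gives no proof of this statement; it cites \cite[Th.~8.4.3]{RT}, and the surrounding text (the Clebsch--Gordan operators $Y_{i,j}^k$ built from Jones--Wentzl idempotents, with reference to \cite{CFS}) makes clear that the intended proof runs through the Temperley--Lieb algebra. Your route---specializing the generic $q$-Clebsch--Gordan highest-weight vectors $w_{k,q}$ to $q=\e$ after renormalization---is genuinely different. It trades the explicit projector calculus for a degeneration argument, and the trace-zero mechanism you give ($[r+1]+[n-1-r]=0$, combined with the local-endomorphism-ring observation that ${\rm tr}_q(f)=\lambda\,{\rm dim}_q(M)$ on indecomposables, with the nilpotent part contributing nothing because it commutes with $K$) is exactly the right one.

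That said, there is one factual error and one genuine gap. The error: you claim every indecomposable summand of $Z_{i,j}$ is either $V_{n-1}$ or has precisely two composition factors $V_r$, $V_{n-2-r}$. As the paper itself records immediately after the theorem, $Z_{i,j}$ is built from the $n$-dimensional $V(\e^r,0,0)$ \emph{and} from $2n$-dimensional extensions thereof; the latter have four composition factors (two copies each of $V_r$ and $V_{n-2-r}$), so your Krull--Schmidt list is incomplete, and the closing sentence of your proposal---gluing ``exactly into the extension pattern $V_r$--$V_{n-2-r}$''---cannot hold as stated. The trace-zero conclusion survives, since those $2n$-dimensional blocks also have vanishing quantum dimension, but the structural claim must be corrected. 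The gap: you define $Z_{i,j}$ as a direct-sum complement of $\bigoplus_k V_k$, yet you only construct each $V_k$ as a \emph{submodule} generated by $w_k$ (and even the identification of that submodule with $V_k$ needs $F^{k+1}w_k=0$, inherited by continuity from generic $q$, which you should state, since a highest-weight vector of weight $\e^k$ in $\bar U_\e$ could otherwise generate the $n$-dimensional non-simple $V(\e^k,0,0)$). Establishing that these copies of $V_k$ actually split off is where ${\rm dim}_q V_k=[k+1]\neq 0$ is genuinely used; this is precisely what the Jones--Wentzl idempotent supplies in the cited approach, and your degeneration argument needs a substitute, e.g.\ a nondegeneracy argument for the quantum trace pairing on ${\rm Hom}(V_k,V_i\otimes V_j)\times{\rm Hom}(V_i\otimes V_j,V_k)$.
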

The modules $Z_{i,j}$ are built on the highest weight modules $V(\e^i,0,0)$ and some $2n$-dimensional extensions thereof. Since their $Z_0$-characters are trivial, like the color modules $V_r$ we can consider them as singular $\Ue$-modules. Their duals and tensor products with color modules $V_r$ decompose into summands of the same form, so the $V_r$ generate a subcategory of $\Ue$-Mod which is closed under tensor product and duality.  
\begin{remark} The category $\bar{U}_\e$-Mod gives rise to the so called \textit{fusion rules} of Wess-Zumino-Witten conformal field theories with gauge group $SU(2)$. In this context, the admissibility conditions of Theorem \ref{singCG} give a method for counting dimensions of the space of conformal blocks \cite[Prop. 1.19]{Ko} (compare also \eqref{6j1} below and \cite[Lemma 2.6]{Ko}).
\end{remark}
The associativity constraint \eqref{associator}, when applied to color modules and computed \textit{modulo trace zero $\bar{U}_\e$-modules}, defines the celebrated \textit{$\e$-$6j$-symbols}. Let us recall how this goes (see \cite[$\S$4]{CFS} for details). 

Theorem \ref{singCG} implies that for each $\e$-admissible triple $(i,j,k)$ the space of $\Ue$-linear embeddings $V_k \ra (V_i\otimes V_j)/Z_{i,j}$ has dimension one. There is a natural basis of lifts, the \textit{Clebsch-Gordan operator}  
\begin{equation}\label{CGO1}
 Y_{i,j}^k\colon V_k\lra V_i\otimes V_j,
\end{equation}
defined in terms of the \textit{Jones-Wentzl idempotents} $e_l$ ($l=i,j,k$) of the Temperley-Lieb algebra; these can be realized as projectors\begin{equation}\label{JW} e_l\colon V_1^{\otimes l}\ra V_1^{\otimes l}\end{equation} in the algebra of $\Ue$-linear transformations of $V_1^{\otimes l}$, whose image is isomorphic to $V_l$ \cite[Prop. 4.3.8]{CFS}. 

By using \eqref{associator} and \eqref{qtrmult}, we get for all colors $a,b,c$ a $\Ue$-linear isomorphism relating two splittings
\begin{align}\label{splitUbar} (V_a\otimes V_b)\otimes V_c = & \oplus_{(l,k)} (Y^l_{a,b}\otimes {\rm id}_{V_c})Y^k_{l,c}(V_k) \oplus Z_1 \notag\\ V_a\otimes (V_b\otimes V_c) = & \oplus_{(j,k)} ({\rm id}_{V_a}\otimes Y^j_{b,c})Y^k_{a,j}(V_k) \oplus Z_2 \end{align}
where the sums are over all pairs of colors $(l,k)$ (resp. $(j,k)$), such that $(a,b,l)$ and $(l,c,k)$ (resp. $(b,c,j)$ and $(a,j,k)$) are $\e$-admissible, and $Z_1$, $Z_2$ are maximal trace zero submodules of $V_a\otimes V_b\otimes V_c$. It follows from Exercise \ref{xcaUbar} (c) below that any $\Ue$-linear map $V_k\ra V_a\otimes V_b\otimes V_c$ can be written \textit{uniquely} as a linear combination of the maps $(Y^l_{a,b}\otimes {\rm id}_{V_c})Y^k_{l,c}$, plus a map whose image is contained in a trace zero summand of $V_a\otimes V_b\otimes V_c$. The \textit{$\e$-$6j$-symbols} 
\begin{equation}\label{6jsymb0}\left\lbrace \begin{array}{ccc} a & b & l \\ c & k & j\end{array}\right\rbrace_\e\in \mc
\end{equation}
are thus defined by
\begin{equation}\label{6j1}({\rm id}_{V_a}\otimes Y^j_{b,c})Y^k_{a,j} = \sum_l \left\lbrace \begin{array}{ccc} a & b & l \\ c & k & j\end{array}\right\rbrace_\e (Y^l_{a,b}\otimes {\rm id}_{V_c})Y^k_{l,c} + S
\end{equation}
where $S$ maps into a summand of trace zero and the sum is taken over all colors $l$ such that $(a,b,l)$ and $(l,c,k)$ are $\e$-admissible. Consider the \textit{normalized} $\e$-$6j$-symbols
\begin{equation}\label{norm6j} \psixj{a}{b}{f}{e}{d}{c} = \frac{(-1)^{f}}{[f+1]}\sqrt{\frac{\Theta(a,b,f)\Theta(d,e,f)}{\Theta(a,c,d)\Theta(b,c,e)}}\left\lbrace \begin{array}{ccc} a & b & f \\ e & d & c\end{array}\right\rbrace_\e 
\end{equation}
where we fix once and for all a square root, and (see \eqref{qbinomialform})
$$\Theta(a,b,k) = (-1)^{\frac{a+b+k}{2}}\frac{[\frac{a+b-k}{2}]![\frac{a-b+k}{2}]![\frac{-a+b+k}{2}][\frac{a+b+k}{2}+1]!}{[a]![b]![k]!}.$$
To an abstract tetrahedron with edges colored by $a,b,c,d,e,f$, let us associate the scalar \eqref{norm6j}. We have: 
\begin{theorem}\label{6jmain} {\rm (\cite{KR}, \cite{RT}; see \cite[Th. 4.4.6]{CFS})} The normalized $\e$-$6j$-symbols \eqref{norm6j} are well-defined, and satisfy:
\begin{itemize}
 \item \textit{Invariance under full tetrahedral symmetries}.
\item The \textit{Elliot-Biedenharn identity:}
\begin{multline}\psixj{c}{d}{h}{g}{e}{f}.\psixj{b}{h}{k}{g}{a}{e} =  \\ \sum_j (-1)^{j}[j+1] \psixj{b}{c}{j}{f}{a}{e}.\psixj{j}{d}{k}{g}{a}{f}.\psixj{c}{d}{h}{k}{b}{j}.\end{multline}
\end{itemize}
\end{theorem}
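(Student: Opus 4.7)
The plan is to handle the three assertions in sequence, translating the algebraic identities \eqref{6j1} and \eqref{pentdiag} into manipulations of closed trivalent networks of Jones-Wenzl projectors via the Temperley-Lieb graphical calculus. For well-definedness of the unnormalized symbols \eqref{6jsymb0}, I would invoke Theorem \ref{singCG}: for each $\e$-admissible triple $(i,j,k)$, the Clebsch-Gordan operator $Y^k_{i,j}$ of \eqref{CGO1} is canonical once the Jones-Wenzl idempotents $e_i,e_j,e_k$ are fixed, and the composites $(Y^l_{a,b}\otimes{\rm id}_{V_c})Y^k_{l,c}$ appearing in \eqref{splitUbar} form a basis of $\mathrm{Hom}_{\Ue}(V_k,V_a\otimes V_b\otimes V_c)$ modulo maps landing in trace-zero summands, so the scalars in \eqref{6j1} are uniquely determined. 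Fixing once and for all a square root branch then makes the normalized symbol \eqref{norm6j} unambiguous.

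For tetrahedral symmetry, the first step is the standard evaluation of the closed theta-network with three parallel strands coloured by $i,j,k$ meeting at two trivalent vertices,
\[
d_{V_i\otimes V_j}\circ\bigl((Y^k_{i,j})^*\otimes Y^k_{i,j}\bigr)\circ b_{V_k} = \frac{\Theta(i,j,k)}{[k+1]}\,{\rm id}_{V_k}.
\]
Composing \eqref{6j1} with suitable dual Clebsch-Gordan operators and taking the quantum trace of both sides then expresses the unnormalized symbol as a ratio $[f+1]\,\mathrm{Tet}(a,b,c,d,e,f)/\bigl(\Theta(a,b,f)\Theta(d,e,f)\bigr)$, where $\mathrm{Tet}(a,b,c,d,e,f)$ is the quantum invariant of the closed trivalent graph obtained by decorating the $1$-skeleton of a tetrahedron with Jones-Wenzl projectors at its four vertices and labelling its six edges by the colours $a,b,c,d,e,f$. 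Inserting this into \eqref{norm6j} rewrites $\psixj{a}{b}{f}{e}{d}{c}$ as a ratio whose numerator is $\mathrm{Tet}$ (up to a sign) and whose denominator is the square root of the product of the four face-theta factors; since each theta factor is then attached to one face of the tetrahedron and no other asymmetry remains, this expression is manifestly invariant under the full $24$-element tetrahedral symmetry group.

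For the Elliot-Biedenharn identity, apply the pentagonal diagram \eqref{pentdiag} to $(U,V,W,X) = (V_a,V_b,V_c,V_d)$ with colours relabelled according to the statement, decompose each of the five intermediate threefold tensor products via Theorem \ref{singCG}, and expand each associator in the form \eqref{6j1}. The commutativity of the pentagon becomes, after matching the coefficient of a fixed basis Clebsch-Gordan tree through $V_k$, an equality of a two-factor product of unnormalized $6j$-symbols coming from one path of the pentagon and a three-factor product from the other; the trace-zero contributions decouple by \eqref{qtrmult} together with Schur's lemma. Passage to the normalization \eqref{norm6j} makes the face-theta factors telescope across the five tetrahedra involved, yielding the stated identity with the $(-1)^{j}[j+1]$ prefactor coming from $(-1)^f/[f+1]$. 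The main obstacle in this plan is the tetrahedral reduction in the second step: verifying that the closed network obtained by contracting the Clebsch-Gordan operators in \eqref{6j1} is precisely the $1$-skeleton of a tetrahedron, with correctly placed projectors and no residual topological twist. This requires Wenzl's inductive formula for the idempotents \eqref{JW} and uses the $\e$-admissibility conditions of Theorem \ref{singCG} in an essential way to ensure that all intermediate fusion channels are well-defined and that the theta-network evaluation above applies at each vertex.
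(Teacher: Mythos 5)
The paper does not prove Theorem \ref{6jmain}: the statement is attributed to \cite{KR}, \cite{RT} and \cite[Th.\ 4.4.6]{CFS}, with no argument given. Your sketch follows precisely the Temperley--Lieb/trivalent-network approach of the last of those references, so there is no ``different route'' to compare against --- you have reconstructed the route the paper itself points to, and the overall plan (well-definedness from the basis statement of \eqref{6j1}, tetrahedral symmetry by rewriting the unnormalized symbol as a $\mathrm{Tet}$-network over two face-thetas and observing that \eqref{norm6j} is exactly $\pm\,\mathrm{Tet}$ over the square root of all four face-thetas, Elliot--Biedenharn from the pentagon \eqref{pentdiag} with theta-factors telescoping over the internal face) is the correct one.

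Two places need tightening. First, your bubble/theta formula is not type-correct as written: $b_{V_k}$ lands in $V_k\otimes V_k^*$ and $(Y^k_{i,j})^*$ has source $(V_i\otimes V_j)^*$, so the displayed composite does not parse. What you want is the ``bubble'' lemma for the composition $V_k\to V_i\otimes V_j\to V_k$ through $Y^k_{i,j}$ and the corresponding dual splitting, which by Schur's lemma is $\bigl(\Theta(i,j,k)/[k{+}1]\bigr)\,\mathrm{id}_{V_k}$; alternatively, close the diagram completely and compare scalars, but then drop the $\mathrm{id}_{V_k}$ from the right-hand side. Second, the bookkeeping for the prefactors must be done explicitly: the $(-1)^f/[f{+}1]$ in \eqref{norm6j} is precisely the factor that cancels the loop value $\Delta_f=(-1)^f[f{+}1]$ arising from the bubble reductions, and in the pentagon step the surviving factor $(-1)^j[j{+}1]$ on the summed internal edge $j$ must be shown to come out of the mismatch between the two $(-1)^{\cdot}/[\cdot{+}1]$ factors on the left and the three on the right. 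Neither of these is deep, but a careless reader of your sketch could believe the signs take care of themselves, and they do not: you should also check that the specific labeling you feed into \eqref{pentdiag} reproduces exactly the index pattern $(c,d,h,g,e,f)$, $(b,h,k,g,a,e)$ on one side and $(b,c,j,f,a,e)$, $(j,d,k,g,a,f)$, $(c,d,h,k,b,j)$ on the other, rather than the generic $(V_a,V_b,V_c,V_d)$ you wrote, since the admissible-face pattern (each internal face $(j,f,a)$, $(j,d,k)$, $(c,j,b)$ appearing twice, the face $(h,g,e)$ appearing twice on the left) is what makes the thetas cancel.
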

Note that the $\e$-$6j$-symbols \eqref{6jsymb0} are only partially symmetric. The commutativity of the Pentagonal Diagram \eqref{pentdiag} for color modules shows up in the Elliot-Biedenharn identity. 
\begin{xca}\label{xcaUbar} (a) Why is the algebra $\bar{U}_\e$ not semisimple ? (Hint: otherwise every $\bar{U}_\e$-module would be semisimple, that is, a sum of simple modules.) 

(b) We have claimed the self duality of the modules $V_r$: determine explicitly an isomorphism $V_r^*\rightarrow V_r$ (use \eqref{dualmod} and the formulas in Section \ref{simpledef} !). 

\noindent (c) By Theorem \ref{singCG}, for any maximal trace zero submodule $U$ of $V_a\otimes V_b\otimes V_c$ which is a summand, the complementary submodule $W$ such that $V_a\otimes V_b\otimes V_c = W \oplus U$ is completely reducible. Show that any simple submodule $V$ of $V_a\otimes V_b\otimes V_c$ such that $V\not\subset U$ is a color module. Deduce that given any two maximal trace zero summands $U_i$ ($i=1,2$) of $V_a\otimes V_b\otimes V_c$, any simple submodule of $U_1$ is a submodule of $U_2$. 
 \end{xca}
Hence, for every maximal trace zero submodule $U$ of $V_a\otimes V_b\otimes V_c$, the matrix \begin{equation}\label{changemat} \left(\left\lbrace \begin{array}{ccc} a & b & l \\ c & k & j\end{array}\right\rbrace_\e\right)_{l,j}\end{equation}
of $\e$-$6j$-symbols relates the two basis of invariant maps $V_k \rightarrow V_a\otimes V_b\otimes V_c/U$ given by $\{(Y^l_{a,b}\otimes {\rm id}_{V_c})Y^k_{l,c}\}_l$ and $\{({\rm id}_{V_a}\otimes Y^j_{b,c})Y^k_{a,j}\}_j$.

\subsection{Pure regular Clebsch-Gordan decomposition and $6j$-symbols} Similarly to Theorem \ref{singCG} and the change of basis matrix \eqref{changemat}, the tensor products of simple regular $\Ue$-modules can be split in different ways, related by morphisms that we are going to define. 
\smallskip

First we consider the duals of regular $\Ue$-modules. Recall the coordinates \eqref{coordinates} of the set ${\rm Spec}(Z_\e)$ of central characters of $\Ue$, and the degree $n$ covering map over the regular ones, $$\tau\colon {\rm Spec}(Z_\e)\setminus \Dd \ra {\rm Spec}(Z_0)\setminus \{\pm {\rm id}\}.$$ Let $\chi \in {\rm Spec}(Z_\e)\setminus \Dd$. Denote by $V_{\chi}$ the corresponding simple $\Ue$-module, and let $\chi^{-1}$ be given by $$\tau(\chi^{-1}) = \tau(\chi)^{-1} \quad \mbox{and}\quad c_{\chi^{-1}} = c_{\chi}$$ where $\tau(\chi)^{-1}$ is the inverse of $\tau(\chi)$ in the group ${\rm Spec}(Z_0)\cong H$. From \eqref{group0} and \eqref{dualmod} we get: 
\begin{lemma}\label{dualmap} The dual module $V_\chi^*$ coincides with $V_{\chi^{-1}}$. Hence the duality of $\Ue$-Mod induces an isomorphism of the bundle $\Xi$ that lifts the inversion map on ${\rm Spec}(Z_0)\setminus \{\pm {\rm id}\}$. 
\end{lemma}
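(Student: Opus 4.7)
The first step is to identify the central character of $V_\chi^*$. By the dual action \eqref{dualmod}, for any $z \in Z_\e$ and $\xi \in V_\chi^*$ one has $z \cdot \xi = \xi \circ S(z)$. Since $S$ is an antiautomorphism, $S(z)$ is still central, so it acts on $V_\chi$ by the scalar $\chi(S(z))$; hence $z$ acts on $V_\chi^*$ by $\chi(S(z))$. Therefore $\chi_{V_\chi^*} = \chi \circ S$.

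The second step is to match $\chi \circ S$ with the description of $\chi^{-1}$. Restricted to $Z_0$, the relation $u(g^{-1}) = S(u)(g)$ in \eqref{group0} says precisely that $\chi \circ S|_{Z_0}$ is the inverse of $\chi|_{Z_0}$ in the group ${\rm Spec}(Z_0) \cong H$, so $\tau(\chi \circ S) = \tau(\chi)^{-1}$. For the Casimir coordinate, I would check directly that $S(\Omega) = \Omega$: using $S(E) = -K^{-1}E$, $S(F) = -FK$, one computes $S(FE) = S(E)S(F) = K^{-1}EFK$, and since $EF = FE + (K - K^{-1})/(q-q^{-1})$ a short calculation (commuting $K^{-1}$ past $F$ and $K$ past $E$) gives $K^{-1}EFK = EF$, so that
\[
S(\Omega) \;=\; EF + \frac{qK^{-1}+q^{-1}K}{(q-q^{-1})^2} \;=\; \Omega
\]
after using $EF - FE = (K-K^{-1})/(q-q^{-1})$ to compare with the definition \eqref{Casimir}. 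Hence $c_{\chi \circ S} = \chi(S(\Omega)) = \chi(\Omega) = c_\chi$, so $\chi \circ S$ has exactly the coordinates defining $\chi^{-1}$.

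For the third step, I invoke Theorem \ref{moduleteo1}(i): since $\chi$ is regular, so is $\chi^{-1}$ (inversion preserves the big cell, and the Casimir coordinate is fixed), and the simple $\Ue$-module with central character $\chi^{-1}$ is unique up to isomorphism. Since $V_\chi^*$ is automatically simple (dualizing a simple module over an algebra with involutive antipode up to inner automorphism yields a simple module), we conclude $V_\chi^* \cong V_{\chi^{-1}}$. The bundle statement is then formal: $V \mapsto V^*$ is a contravariant functor on $\Ue$-modules and an involution up to natural isomorphism (since $S^2$ is inner by conjugation with $K$), so it induces a bundle isomorphism of the restriction of $\Xi$ to the regular locus lifting the algebraic inversion on ${\rm Spec}(Z_0)\setminus\{\pm {\rm id}\}$.

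The only non-formal ingredient is the computation $S(\Omega) = \Omega$, and even that is just a short application of the defining relations \eqref{nonres}; the main conceptual point is simply that the Hopf algebra structure on $Z_0$ coming from Lemma \ref{HopfZ0} is by construction dual to the group structure on ${\rm Spec}(Z_0)$, so duality of modules automatically implements inversion in $H$.
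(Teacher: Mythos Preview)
Your argument is correct and follows exactly the route the paper indicates: the paper gives no proof beyond the words ``From \eqref{group0} and \eqref{dualmod} we get'', and your three steps simply unpack those two references. The one detail you supply that the paper leaves tacit is the verification $S(\Omega)=\Omega$, which is indeed needed to match the Casimir coordinate $c_{\chi^{-1}}=c_\chi$ in the definition of $\chi^{-1}$; your computation of it is fine.
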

It is immediate to check that the regular simple $\Ue$-modules $V(\lambda,a,b)$ have vanishing quantum dimension, and so are trace zero modules. Thus, we are in some sense in a situation opposite to that of Section \ref{singcat}, where we dealt with the color modules $V_r$. What makes the tensor products of the regular $V(\lambda,a,b)$s easy to handle are the following facts, that we have proved in Lemma \ref{HopfZ0} and Theorem \ref{moduleteo1} (i):
\begin{itemize}
 \item (a) $Z_0$ is a Hopf subalgebra of $\Ue$, and in particular $\Delta(Z_0) \subset Z_0\otimes Z_0$;
\item (b) for all $\chi\in {\rm Spec}(Z_\e)\setminus \Dd$, the algebra $U_\e^{\chi}$ is simple, and isomorphic to $M_n(\mc)$.
\end{itemize}
Let $h \in {\rm Spec}(Z_0)\setminus \{\pm {\rm Id}\}$ be a regular $Z_0$-character, and $\Ii^h\subset \Ue$ the ideal generated by ${\rm Ker}(h)$. For any $\chi\in\tau^{-1}(h)$, the algebra $U_\e^{h} = \Ue/\Ii^{h}$ is isomorphic to $\Ue^{\chi}\otimes_h Z_\e$. Then, in virtue of (b) above, it is semisimple, with a direct product decomposition into complementary ideals
\begin{equation}\label{semisimple} U_\e^{h} = \prod_{\chi\in \tau^{-1}(h)} U_\e^{\chi}.
\end{equation}
Correspondingly, the unit $1\in U_\e^{h}$ can be written as
\begin{equation}\label{unitdec1}
 1 = \sum_{\chi\in \tau^{-1}(h)} e_{\chi},
\end{equation}
where the $e_{\chi}$s are the units of the subalgebras $U_\e^{\chi}\subset U_\e^{h}$, and satisfy $U_\e^{\chi} = U_\e^{h}e_{\chi}$, and $e_{\chi}e_{\chi'}=0$ for $\chi\ne \chi'$ \cite[Prop. XVII.4.3]{L}. 

Since $U_\e^{h}$ is semisimple, every $U_\e^{h}$-module is semisimple, that is, a sum of simple submodules. On another hand, because of (a) above, $Z_0$ acts by scalars on any tensor product $V$ of simple $\Ue$-modules, and so $V$ is naturally a $U_\e^{h}$-module for some $h\in {\rm Spec}(Z_0)$. In fact, \eqref{group0} shows that $h$ is equal to the product of the $Z_0$-characters of the tensorands. 

This applies in particular to the $\Ue$-modules $V_{\rho}\otimes V_{\mu}$ for all $\rho$, $\mu \in {\rm Spec}(Z_\e)\setminus \Dd$. They are $U_\e^{h}$-modules, where $h=\tau(\rho)\tau(\mu)$. If $h\ne \pm {\rm id}$, $U_\e^{h}$ is semisimple, and hence
\begin{equation}\label{splitmodule1} V_{\rho}\otimes V_{\mu} =  1. (V_{\rho}\otimes V_{\mu}) = \oplus_{\chi\in \tau^{-1}(h)}\  e_{\chi} . (V_{\rho}\otimes V_{\mu}),\end{equation} 
where the projectors $e_{\chi}$ map $V_{\rho}\otimes V_{\mu}$ onto a submodule isomorphic to $V_\chi$. We deduce the following analog of Theorem \ref{singCG} for regular $\Ue$-modules:
\begin{proposition} \label{CG} \textit{(i) Any tensor product of simple $\Ue$-modules which has a regular $Z_0$-character is a semisimple $\Ue$-module.} 

\textit{(ii) If $\rho$, $\mu \in {\rm Spec}(Z_\e)\setminus \Dd$ are such that $h=\tau(\rho)\tau(\mu) \ne \pm {\rm id}$, then} \begin{equation}\label{splitmodule2} V_\rho \otimes V_\mu \cong \oplus_{\chi\in \tau^{-1}(h)} V_{\chi}.\end{equation}
\end{proposition}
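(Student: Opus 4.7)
The plan is to leverage the Hopf subalgebra structure of $Z_0$ (Lemma \ref{HopfZ0}) together with Theorem \ref{moduleteo1}(i). Given simple $\Ue$-modules $V_1,\ldots,V_k$ with respective $Z_0$-characters $h_i=\tau(\chi_{V_i})$, the inclusion $\Delta(Z_0)\subset Z_0\otimes Z_0$ combined with the Hopf-dual formulas \eqref{group0} shows that $Z_0$ acts on $V=V_1\otimes\cdots\otimes V_k$ through the product character $h=h_1\cdots h_k\in{\rm Spec}(Z_0)$, so $V$ factors through the quotient algebra $U_\e^h$ introduced just before the statement. When $h\ne\pm{\rm Id}$, Theorem \ref{moduleteo1}(i) gives $\Ue^{\chi}\cong M_n(\mc)$ for every $\chi\in\tau^{-1}(h)$, and the decomposition \eqref{semisimple} presents $U_\e^h$ as a finite product of matrix algebras, hence semisimple; consequently $V$ is semisimple as a $\Ue$-module.

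\textbf{Part (ii), diagonal model case.} Applying (i) to $V_\rho\otimes V_\mu$ yields
\[
V_\rho\otimes V_\mu\;\cong\;\bigoplus_{\chi\in\tau^{-1}(h)} V_\chi^{\oplus m_\chi},
\]
and combining $\dim V_\rho=\dim V_\mu=\dim V_\chi=n$ with $|\tau^{-1}(h)|=\deg(\tau)=n$ (Theorem \ref{moduleteo1}(iii)) gives $\sum_\chi m_\chi=n$. It remains to show $m_\chi=1$ for every $\chi$. I would first handle the base case $V_\rho=V(\lambda,0,0)$, $V_\mu=V(\mu,0,0)$ with $\lambda^n,\mu^n,\lambda^n\mu^n\ne\pm 1$. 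Applying the $q$-binomial formula \eqref{qbinomial} to $\Delta(F)=1\otimes F+F\otimes K^{-1}$ together with $[n]=0$ yields $\Delta(F^n)=1\otimes F^n+F^n\otimes K^{-n}$, so the vector $v_0\otimes w_0$ is a highest weight vector of weight $\lambda\mu$ annihilated by both $E$ and $F^n$, and therefore generates a submodule isomorphic to $V(\lambda\mu,0,0)$. Inspecting each $K$-weight space of weight $\lambda\mu\e^{-2k}$ ($0\leq k\leq n-1$) in the basis $\{v_i\otimes w_j\,:\, i+j\equiv k \pmod n\}$ and locating the one-dimensional kernel of the restriction of $E$ produces $n$ pairwise non-isomorphic highest weight submodules of the form $V(\lambda\mu\zeta,0,0)$, $\zeta^n=1$, which exhaust $\tau^{-1}(h)$ and saturate the dimension $n^2$ of $V_\rho\otimes V_\mu$.

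\textbf{Propagation and main obstacle.} For the general case I would invoke Theorem \ref{QCAteo}(i): every regular $Z_0$-character is $\Gg$-conjugate to a regular diagonal one, while Theorem \ref{bundle2} says $\Gg$ acts by bundle morphisms on the collection of simple regular $\Ue$-modules. Provided the $\Gg$-action is compatible with tensor products, i.e.\ $^g(V_\rho\otimes V_\mu)\cong{^gV_\rho}\otimes{^gV_\mu}$ as $\Ue$-modules for all $g\in\Gg$, the multiplicities $m_\chi$ are invariants along $(\Gg\times\Gg)$-orbits and the diagonal case concludes the proof. This compatibility is the principal obstacle, being delicate because the derivations $D_e,D_f$ generating $\Gg$ are not primitive in $\Ue$ (even though $\Delta$ does preserve $Z_0$). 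A technically cleaner alternative that sidesteps this issue is to bound $\dim_\mc{\rm End}_{\Ue}(V_\rho\otimes V_\mu)\leq n$ directly: combined with the identity $\sum_\chi m_\chi^2=\dim_\mc{\rm End}_{\Ue}(V_\rho\otimes V_\mu)$ coming from the decomposition above and with $\sum_\chi m_\chi=n$, the Cauchy--Schwarz inequality then forces $m_\chi=1$ for every $\chi\in\tau^{-1}(h)$.
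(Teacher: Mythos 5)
Your argument for (i) and for the idempotent decomposition $V_\rho\otimes V_\mu=\oplus_\chi e_\chi.(V_\rho\otimes V_\mu)$ in (ii) reproduces the paper's approach, which rests on Lemma \ref{HopfZ0}, Theorem \ref{moduleteo1}(i), and the decomposition \eqref{semisimple}. Where you part ways is in recognizing explicitly that the equality of multiplicities $m_\chi=1$ is a non-trivial step: the text preceding Proposition \ref{CG} simply asserts that ``the projectors $e_\chi$ map $V_\rho\otimes V_\mu$ onto a submodule isomorphic to $V_\chi$'' (as opposed to $V_\chi^{\oplus m_\chi}$), with no further argument. What the paper tacitly relies on, but does not spell out, is that $\dim U_\e^h=n^3$ (Theorem \ref{finite}), that all simple $U_\e^h$-modules are $n$-dimensional, and hence $|\tau^{-1}(h)|=n$ and the action of $U_\e^h=\prod_\chi M_n(\mc)$ on the $n^2$-dimensional module $V_\rho\otimes V_\mu$ must be faithful for the dimensions to match.

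Neither of your two proposed routes to $m_\chi=1$ is actually completed, as you yourself partly concede. For the diagonal-plus-propagation route, the obstacle you identify is genuine and precisely the reason the paper later introduces, in Proposition \ref{actext2}, the \emph{twisted} action $\Delta_\Gg$ (computed via \eqref{coactGXi}) rather than the diagonal action $\Gg\times\Gg$: since the derivations $D_e,D_f$ generating $\Gg$ are not Hopf derivations, there is no isomorphism of $\Ue$-modules $^g(V_\rho\otimes V_\mu)\cong{^gV_\rho}\otimes{^gV_\mu}$ in general, and the orbit structure of pairs under $\Gg\times\Gg$ does not preserve the $Z_0$-character of the product. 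The clean propagation argument would require the bundle structure on $\Xi^{(2)}$ of Section \ref{6JMORPH}, which logically depends on the present Proposition. For the alternative route, the bound $\dim_\mc{\rm End}_{\Ue}(V_\rho\otimes V_\mu)\leq n$ is stated but not proved, and it is exactly as hard as the original claim (by the double centralizer theorem it is equivalent to $\Delta:U_\e^h\to\Ue^\rho\otimes\Ue^\mu$ being injective). Also note that the equality $|\tau^{-1}(h)|=n$ you invoke does not follow from $\deg(\tau)=n$ alone (a finite map of degree $n$ can have set-theoretic fibers with fewer than $n$ points over ramification points); it must be extracted from the dimension count above, as the paper implicitly does.
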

\begin{remark}\label{idempreg} The idempotents $e_{\chi}$ of $\Ue^h$ play in Proposition \ref{CG} the same role as the Jones-Wentzl idempotents \eqref{JW} do in Theorem \ref{singCG}. In both cases, the simple summands are distinguished by the Casimir element. For color modules, its values are determined by the classical or quantum dimension. For regular modules, these are constantly equal to $n$ and $0$, respectively; the Casimir element selects a $n$th-root of $z$ (see Theorem \ref{bundle} (ii)).
\end{remark}
\begin{definition} Let $h\in {\rm Spec}(Z_0)\setminus \{\pm {\rm id}\}$ be a regular $Z_0$-character. The \textit{multiplicity module} $M(h)$ is the $n$ dimensional vector space spanned by the idempotents $e_{\chi}$ of $\Ue^h$, where $\chi\in \tau^{-1}(h)$.

A tuple $(\rho_1,\ldots,\rho_p)$ of regular central characters $\rho_i\in {\rm Spec}(Z_\e)\setminus \Dd$ is \textit{regular} if for all $1\leq k\leq k+l\leq p$, we have $$\tau(\rho_k)\tau(\rho_{k+1})\ldots\tau(\rho_{k+l})\in {\rm Spec}(Z_0)\setminus \{\pm {\rm id}\}.$$
\end{definition} 
By Theorem \ref{moduleteo1} (i) we have a corresponding notion of regular tuples of $\Ue$-modules.
\begin{remark} \label{fixed} For any two $h$, $h'\in {\rm Spec}(Z_0)\setminus \{\pm {\rm id}\}$, the decomposition \eqref{unitdec1} provides canonical isomorphisms $M(h) \cong M(h')$. Also, Proposition \ref{CG} (ii) implies that any two regular pairs $(\rho,\mu)$ and $(\nu,\kappa)$ satisfying $h=\tau(\rho)\tau(\mu) = \tau(\nu)\tau(\kappa)$ give isomorphic $\Ue$-modules $V_\rho \otimes V_\mu$ and $V_\nu \otimes V_\kappa$.
\end{remark}
We can reorganize the direct sum $\oplus_{\chi\in \tau^{-1}(h)} V_{\chi}$ into a tensor product as follows. Let $V$ be the vector space underlying the modules $V_\chi$. Define an action of $\Ue$ on $V\otimes M(h)$ by extending linearly the formula
\begin{equation}\label{standardcop} a.(v\otimes e_{\chi}) = (a_{\chi}.v)\otimes e_\chi
\end{equation}
for all $a\in \Ue$ and $v\in V$, where $a_\chi\in \Ue^\chi$ is the coset of $a$, with its canonical action on $V$. We have a canonical isomorphism $\oplus_{\chi\in \tau^{-1}(h)} V_{\chi}\cong V\otimes M(h)$ of $\Ue$-modules, mapping $v\in V_\chi$ to $v\otimes e_\chi$. Then, for all regular pairs $(\rho,\mu)$ with $h=\tau(\rho)\tau(\mu)$, any isomorphism of the form \eqref{splitmodule2} defines an isomorphism of $\Ue$-modules
\begin{equation}\label{Fdef}\fonc{F(\rho,\mu)}{V\otimes V}{V\otimes M(h)}{v_1\otimes v_2}{\sum_{\chi\in \tau^{-1}(h)}e_{\chi}(v_1\otimes v_2) \otimes e_{\chi}}
\end{equation}
by putting the $\Ue$-module structure of $V_\rho\otimes V_\mu$ on $V\otimes V$; when this structure is clear from the context, we write $F$ for $F(\rho,\mu)$, and similarly for 
the inverse \textit{evaluation map}
\begin{equation}\label{Kdef}K(\rho,\mu)\colon V\otimes M(h)\lra V\otimes V.\end{equation} 
Let $\Delta_{(\rho,\mu)}\colon \Ue \ra \Ue^\rho\otimes\Ue^\mu$, $a\mapsto \Delta(a)$ mod$(\Ii^\rho \otimes \Ii^\mu)$. The formula \eqref{standardcop}  shows that $K(\rho,\mu)$ and $\Delta_{(\rho,\mu)}$ are equivalent data, related by
\begin{equation}\label{twistcop}
\Delta_{(\rho,\mu)}(a) =  K(\rho,\mu)(a\otimes 1)K(\rho,\mu)^{-1}
\end{equation}
as operators in ${\rm Aut}(V_\rho\otimes V_\mu)$.
\begin{remark} Since the modules $V_\chi$ are simple, ${\rm End}_{\Ue}(V_\chi)\cong \mc$, and so $K(\rho,\mu)$ is uniquely determined up to a scalar factor only. We can reduce this to an ambiguity modulo \textit{$n$th roots of unity} by requiring that ${\rm det}(K(\rho,\mu)) = 1$. This produces a degree $n$ polynomial in the coordinates of $\rho$, $\mu\in {\rm Spec}(Z_\e)\setminus \Dd$.\end{remark}
We are now ready to define the operators and functional relations describing the isomorphisms 
$$(V_\rho\otimes V_\mu)\otimes V_\nu \cong V_\rho\otimes (V_\mu\otimes V_\nu) \cong 
\oplus_{\chi\in \tau^{-1}(h)} (\underbrace{V_{\chi}\oplus \ldots \oplus V_{\chi}}_{n\ \mathrm{times}})$$
for regular triples $(\rho,\mu,\nu)$, where $h=\tau(\rho)\tau(\mu)\tau(\nu)$.
\begin{definition}\label{6jdef} Let $(\rho,\mu,\nu)$ be a regular triple of $\Ue$-modules. Put $f=\tau(\rho)$, $g = \tau(\mu)$, $h=\tau(\nu)$. The {\it regular $6j$-symbol operator} of $(\rho,\mu,\nu)$ is the linear isomorphism 
\begin{equation}\label{Rdef} \Rr(\rho,\mu,\nu)\colon M(fgh) \otimes M(gh) \lra M(fg)\otimes M(fgh)
\end{equation}
that makes the following diagram commutative:
\begin{equation}\label{fusiondiag} \xymatrix{ V \otimes M(fgh) \otimes M(gh) \ar[rr]^{{\rm id} \otimes \Rr(\rho,\mu,\nu)} \ar[d]_{({\rm id}\otimes \Delta_0)(K)} & &  V \otimes M(fg) \otimes M(fgh) \ar[d]^{(\Delta_0\otimes {\rm id})(K)} \\ V_\rho \otimes V \otimes M(gh) \ar[d]_{{\rm id}\otimes K} & &  V\otimes M(fg) \otimes V_\nu \ar[d]^{K \otimes {\rm id}}\\ V_\rho\otimes (V_\mu\otimes V_\nu) & & (V_\rho\otimes V_\mu)\otimes V_\nu \ar[ll]^{a_{V_\rho,V\mu,V_\nu}}}\end{equation}
where $\Delta_0(x) = x\otimes 1$ (the \textit{standard} coproduct). In operator form, we have
\begin{equation}\label{modpent}
K_{12}(\rho,\mu)K_{13}(\chi_r,\nu)\Rr_{23}(\rho,\mu,\nu) = K_{23}(\mu,\nu)K_{12}(\rho,\chi_l)
\end{equation}
for all $\chi_r \in \tau^{-1}(fg)$, $\chi_l\in \tau^{-1}(gh)$.
\end{definition}
Thus, $\Rr$ determines via \eqref{modpent} the associativity constraint $a$ of $\Ue$-Mod over regular modules, like $K$ determines via \eqref{twistcop} the tensor product of $\Ue$-Mod over regular modules. The Pentagonal Diagram \eqref{pentdiag} translates as follows:
\begin{proposition}\label{3cocycloid} \textit{Let $(\kappa,\rho,\mu,\nu)$ be a regular $4$-tuple of $\Ue$-modules. Set $f=\tau(\kappa)$, $g = \tau(\rho)$, $h=\tau(\mu)$, $k=\tau(\nu)$. The following diagram is commutative:}
$$\xymatrix{M(fghk) \otimes M(gh) \otimes M(ghk) \ar[dd]^{(\Delta_0\otimes {\rm id})(\Rr)} & & M(fghk)\otimes M(ghk) \otimes M(hk)  \ar[ll]_{{\rm id} \otimes \Rr(\rho,\mu,\nu)} \ar[d]^{({\rm id}\otimes \Delta_0)(\Rr)} \\ & & M(fg)\otimes M(fghk) \otimes M(hk) \ar[d]^{(\tau\Delta_0\otimes {\rm id})(\Rr)}\\  M(fgh) \otimes M(gh) \otimes M(fghk) \ar[rr]^{\Rr(\kappa,\rho,\mu)\otimes {\rm id}} & & M(fg) \otimes M(fgh)\otimes M(fghk) }$$
\textit{where $\tau$ is the flip map. In operator form, we have the \emph{Pentagon Equation}:}
\begin{equation}\label{penteq} \Rr_{12}(\kappa,\rho,\mu)\Rr_{13}(\kappa,\chi_1,\nu)\Rr_{23}(\rho,\mu,\nu) = \Rr_{23}(\chi_2,\mu,\nu)\Rr_{12}(\kappa,\rho,\chi_3)
\end{equation}
for all $\chi_1 \in \tau^{-1}(gh)$, $\chi_2\in \tau^{-1}(fg)$, $\chi_3\in \tau^{-1}(hk)$.
\end{proposition}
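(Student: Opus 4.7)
The plan is to derive \eqref{penteq} as the translation, through the evaluation maps $K$ of \eqref{Kdef}, of the pentagon axiom \eqref{pentdiag} for the associativity constraint of the tensor category $\Ue$-Mod, applied to the regular quadruple $(V_\kappa,V_\rho,V_\mu,V_\nu)$. The point is that $\Rr$ was defined in \eqref{fusiondiag} precisely so that its action on multiplicity factors corresponds to the associator $a$ conjugated by $K$-maps, so a pentagon for $a$ should spit out a pentagon for $\Rr$.

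First, I would iterate Proposition \ref{CG}: for each of the five bracketings of $V_\kappa\otimes V_\rho\otimes V_\mu\otimes V_\nu$, a nested composition of three basic $K$-morphisms yields a canonical isomorphism of $\Ue$-modules with $V\otimes M_1\otimes M_2\otimes M_3$, where the ordered multiplicity factors are indexed by the consecutive subwords of $fghk$ prescribed by the bracketing. This provides five isomorphisms sitting over the five vertices of \eqref{pentdiag} for $(V_\kappa,V_\rho,V_\mu,V_\nu)$. Each edge of that pentagon is an associator $a_{V_\bullet,V_\bullet,V_\bullet}$ possibly tensored with an identity; by the defining diagram \eqref{fusiondiag}, rewritten in operator form as \eqref{modpent}, such an edge corresponds --- after conjugation by the ambient $K$'s --- to $\Rr$ acting nontrivially on two of the three multiplicity factors, with identity on the third and on the leading $V$-factor. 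Which embedding appears (plain $\mathrm{id}\otimes\Rr$ or $\Rr\otimes\mathrm{id}$, or a twisted $(\Delta_0\otimes\mathrm{id})(\Rr)$, $(\mathrm{id}\otimes\Delta_0)(\Rr)$, $(\tau\Delta_0\otimes\mathrm{id})(\Rr)$) is forced by which pair of adjacent tensor positions carries the associator, and matches exactly what appears in the statement of the proposition. Commutativity of \eqref{pentdiag}, which holds axiomatically in any tensor category, then forces commutativity of the pentagon of $\Rr$'s, which in operator form is \eqref{penteq}.

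The hard part will be combinatorial bookkeeping: identifying the correct triples of central characters and the correct subwords of $fghk$ attached to each occurrence of $\Rr$ along the two paths around the pentagon, and in particular checking that the flip $\tau$ in $(\tau\Delta_0\otimes\mathrm{id})(\Rr)$ is produced by the positional swap intrinsic to the fifth edge (where the middle multiplicity label jumps past the outermost one). A secondary issue is the scalar ambiguity of the $K$-operators, reducible to $n$-th roots of unity after the normalization $\det K = 1$: one has to verify that these scalars cancel pairwise between the two sides of \eqref{penteq}, or equivalently that the rigidity of the characterization \eqref{fusiondiag} of $\Rr$ is sufficient to upgrade the induced identity from a projective one to an equality of operators on $M\otimes M\otimes M$.
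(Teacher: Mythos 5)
Your proposal is correct and is exactly the argument the paper sketches: the paper dismisses \eqref{penteq} as ``a straightforward computation using \eqref{modpent}'', which is precisely the transport of the tensor category pentagon \eqref{pentdiag} through the $K$-isomorphisms \eqref{fusiondiag}/\eqref{modpent} defining $\Rr$ that you spell out (the paper also offers, parenthetically, the Heisenberg double route of Proposition \ref{propcan}(iii)). Your flag on the scalar ambiguity of the $K$-maps is a fair observation that the paper's one-line proof leaves equally implicit; it is pinned down only a posteriori in Theorem \ref{Rmorphism}, where $\Rr$ is realized as the image of the canonical element of $\Hh_h$, which satisfies the Pentagon Equation on the nose by Proposition \ref{propcan}(ii).
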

\begin{proof} Commutativity of the diagram is equivalent to \eqref{penteq}. The latter is proved by a straightforward computation using \eqref{modpent}; details are left as an exercise (or, compare with the proof of Proposition \ref{propcan} iii)).
\end{proof}
\begin{remark}\label{borelcase} \textit{(The Borel case)} The notion of regular module makes sense as well for any Borel subalgebra $U_\e b$ of $\Ue$, say the positive one, generated by $K^{\pm 1}$ and $E$. Since $U_\e b$ has no Casimir element, the isomorphism classes of simple regular $U_\e b$-modules correspond under the map $\varphi$ of \eqref{chi} to triangular matrices up to sign in $PSL_2\mc \setminus \{\pm {\rm Id}\}$. Then, for regular pairs $(\rho,\mu)$ of simple $U_\e b$-modules, \eqref{splitmodule2} simplifies to a splitting into $n$ isomorphic copies of a single regular $U_\e b$-module $V_{\rho\mu}$. The multiplicity module $M(h)$ becomes the space of equivariant projections ${\rm Hom}_{U_\e\! b}(V_\rho\otimes V_\mu,V_{\rho\mu})$, and \eqref{Kdef} is the map $$\fonc{K}{V_{\rho\mu}\otimes {\rm Hom}_{U_\e\! b}(V_{\rho\mu},V_\rho\otimes V_\mu )} {V_\rho\otimes V_\mu }{v\otimes i}{i(v).}$$ 
\end{remark}
\begin{xca} Let $V$ be a finite dimensional vector space and $f$, $g\in {\rm End}(V)$ such that $f^2=f$, $g^2=g$ and $fg = gf$. Show that $R:=f\otimes g$ satisfies \begin{equation}\label{constpent} R_{12}R_{13}R_{23} = R_{23}R_{12}\end{equation}
in ${\rm End}(V\otimes V\otimes V)$. Show that if $R:=f\otimes {\rm id}$ or $R:={\rm id} \otimes f$ is a solution of \eqref{constpent}, then $f^2=f$.   
\end{xca}
\section{The regular $6j$-symbols as bundle morphisms}\label{6JMORPH}
The FRT method associates a cobraided Hopf algebra to any finite dimensional invertible solution of the Quantum Yang-Baxter Equation (see eg. \cite[Ch. VIII]{K}). Similarly, any finite dimensional invertible solution of the \textit{constant} Pentagon Equation \eqref{constpent} is the canonical element of the  \textit{Heisenberg double} of some Hopf algebra \cite{BS,Mi,D}. 

We are going to see how this result can be adapted to equation \eqref{penteq}, which has the form of a (non-Abelian) $3$-cocycle identity over the group $PSL_2\mc$ via the map $\sigma\tau\colon {\rm Spec}(Z_\e) \ra PSL_2\mc$. We proceed in several steps to identify both the evaluation map \eqref{Kdef} and the regular $6j$-symbol operator \eqref{Rdef} as instances of a same bundle morphism $\Rr \colon \Xi^{(2)} \lra \Xi^{(2)}$.

\subsection{The QUE algebra $U_h$} The \textit{quantum universal envelopping (QUE) algebra} $U_h = U_h(sl_2)$ is the Hopf algebra over $\mc[[h]]$ topologically generated by three variables $X$, $Y$ and $H$ with relations 
$$[H,X]=2X\ ,\ [H,Y]=-2Y\ ,\ [X,Y] = \frac{\sinh(hH/2)}{\sinh(h/2)} = \frac{e^{hH/2}-e^{-hH/2}}{e^{h/2}-e^{-h/2}}.$$
(See eg. \cite[Ch. 6--8]{CP} and \cite[Ch. XVII]{K}.) The comultiplication and counit are determined by
$$ \Delta(H) = H\otimes 1+1\otimes H,$$ $$\Delta(X) = \ X\otimes e^{hH/4} + e^{-hH/4}\otimes X\ ,\ \Delta(Y) = Y\otimes e^{hH/4} + e^{-hH/4}\otimes Y$$
and 
$$\eta(H) = \eta(X) = \eta(Y) = 0.$$
The antipode is $$S(H) = -H\ ,\ S(X) = -e^{hH/2}X\ ,\ S(Y) = -e^{-hH/2}Y.$$
The QUE algebra $U_h$ is a \textit{topological deformation} of the universal enveloping algebra $U sl_2$, in the sense that $U_h\cong Usl_2[[h]]$ as $\mc[[h]]$-modules, and $U_h/hU_h \cong Usl_2$ as Hopf algebras, where $U sl_2$ has the Hopf algebra structure determined by \begin{equation}\label{UEAdef} \Delta(x) = x\otimes 1+1\otimes x\ ,\ \eta(x)=0\ ,\ S(x)=-x\end{equation} for all $x\in sl_2$. In particular, $U_h$ is equipped with the $h$-adic topology, and is a \textit{topologically} free $\mc[[h]]$-module; the algebraic tensor product $U_h \otimes U_h$ is thus equally completed in $h$-adic topology. Then, the coproduct $\Delta$ is well-defined, and all the above structure maps are continuous. 

Let us identify $\Uq$ with the algebra $U_q'$ of Exercise \ref{UEA}. The latter is explicitly defined for all values of $q$ and inherits from $U_q$ a structure of Hopf algebra. There is an injective morphism of Hopf algebras $i\colon \Uq' \ra U_h$, given by $i(q)=e^{h/2}$ and 
\begin{equation}\label{embedalg} \begin{array}{lll}i(E) = Xe^{hH/4} & ,& i(F) =e^{-hH/4}Y\ ,\\ i(K^{\pm 1}) = e^{\pm hH/2}& ,& i(L) = XY-e^{-hH/4}YXe^{hH/4}.\end{array}
\end{equation}
Hence, for what regards properties independent of the specific evaluation $q=\e$, we will consider $\Uq$ as a subHopf algebra of $U_h$.
\smallskip

\subsubsection{The QUE dual $U_h^\circ$} Duality is a delicate question for infinite dimensional algebras. We say that two Hopf algebras $A$ and $A'$ over a ground ring $k$ are {\it dual} if there exists a bilinear pairing $\langle \ , \ \rangle\colon A\otimes A' \ra k$ which is non degenerate in the sense that for all $f\in A'$ (resp. $u\in A$), $\langle u,f\rangle = 0$ for all $u\in A$ (resp. $f\in A'$) implies $f=0$ (resp. $u=0$), and
\begin{equation}\label{propertiesdual} \begin{array}{c}\langle u, fg\rangle  = \langle \Delta(u), f\otimes g\rangle\ ,\ \langle u\otimes v,\Delta(f) \rangle = \langle uv, f\rangle\\ \langle u, S(f)\rangle = \langle S(u), f\rangle\ ,\ \eta(f) = \langle 1 , f\rangle\ ,\ \eta(u) = \langle u , 1\rangle\end{array} \end{equation}
for all $u$, $v\in A$, $f$, $g\in A'$, where we denote by the same letters the structure maps of $A$ and $A'$. When suitably interpreted, these formulas give indeed the linear dual $U_h^*={\rm Hom}_{\mc[[h]]}(U_h,\mc[[h]])$ a structure of topological Hopf algebra dual to $U_h$ for the natural pairing $\langle \ , \ \rangle\colon U_h\otimes U_h^* \ra \mc[[h]]$ 
\cite[Def. 6.3.3]{CP}. However $U_h^*$ is not a QUE algebra since there does not exist any Lie algebra $\mathfrak{g}$ such that $U_h^*/hU_h^* \cong U\mathfrak{g}$ as Hopf algebras (otherwise $U_h^*$ should be cocommutative up to first order by \eqref{UEAdef}). 

A way to repair this inconvenience is to consider the space
$$U_h^\circ = \sum_{l\geq 0} h^{-l} I^l,$$
where $\textstyle I = \sum_{i,j,k} (X^iY^jH^k)^* U_h^*$ is the maximal ideal of $U_h^*$, the elements $(X^iY^jH^k)^*$ being dual to PBW basis vectors of $U sl_2$ and the sums completed in $h$-adic topology. Recall from Corollary \ref{PoissonH} and Remark \ref{dualLPG} that the Poisson-Lie structure of $H\cong {\rm Spec}(Z_0)$ given by the bracket $\psi_*\{\ ,\ \}$ is dual to the so called standard one on $PSL_2\mc$. We have:
\begin{proposition}\label{QUEdual1} \cite[Prop. 10.3]{ES} \emph{$U_h^\circ$ is a Hopf algebra dual to $U_h$ under the completion of the natural pairing $\langle \ , \ \rangle\colon U_h\otimes U_h^* \ra \mc[[h]]$. It is isomorphic to the QUE algebra $U_h(\mathfrak{h})$, where $\mathfrak{h}$ is the Lie algebra of $H$ with bialgebra structure tangent to $\psi_*\{\ ,\ \}$}. 
\end{proposition}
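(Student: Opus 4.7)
The plan has three main parts: (a) show $U_h^\circ\subset U_h^*[h^{-1}]$ is stable under the Hopf structure maps of $U_h^*$ and so inherits a Hopf algebra structure; (b) show the natural pairing $U_h\otimes U_h^*\to\mc[[h]]$ extends $\mc[[h]]$-linearly to $U_h\otimes U_h^\circ\to\mc[[h]]$ and stays non-degenerate; (c) identify the reduction modulo $h$ with $U\mathfrak{h}$ as Hopf algebras.

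For (a), closure under multiplication is immediate: $I^k\cdot I^l\subseteq I^{k+l}$ gives $h^{-k}I^k\cdot h^{-l}I^l\subseteq h^{-(k+l)}I^{k+l}$. Stability under the coproduct of $U_h^*$ reduces to the filtration estimate
\begin{equation*} \Delta_{U_h^*}(I^l)\;\subseteq\;\sum_{p+q=l} I^p\,\hat\otimes\,I^q,\end{equation*}
which, by duality, is equivalent to multiplication in $U_h$ respecting the filtration by powers of its augmentation ideal. Rescaling by $h^{-l}$ maps the left-hand side into $\sum_{p+q=l} (h^{-p}I^p)\,\hat\otimes\,(h^{-q}I^q)\subseteq U_h^\circ\,\hat\otimes\, U_h^\circ$. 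Stability under antipode and counit is similar ($S(I^l)\subseteq I^l$ since $S$ preserves the augmentation, and $\eta$ vanishes on $I$). For (b), $\mc[[h]]$-linear extension is automatic; to check the pairing lands in $\mc[[h]]$, I would use that $\langle u,f\rangle\in h^l\mc[[h]]$ when $f\in I^l$ and $u$ lies in a suitable $h$-adic completion of the $l$-th power of the augmentation ideal of $U_h$, a standard PBW/valuation argument. Non-degeneracy is then inherited from that of the original pairing on $U_h\otimes U_h^*$.

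The hard part will be (c). The key is the filtration of $U_h^\circ$ by the subspaces $h^{-l}I^l$. Its associated graded, after reducing modulo $h$, identifies with the symmetric algebra on $I/(I^2+hI)$, and this cotangent space at the identity of the formal group $\mathrm{Spf}(U_h/hU_h)=\mathrm{Spf}(Usl_2)$ is canonically $sl_2^*$. Combined with a PBW-type argument, this exhibits $U_h^\circ/hU_h^\circ\cong U(sl_2^*)$ as a vector space. To compute the Lie bracket induced on $sl_2^*=\mathfrak{h}$, take $f,g\in I$: the cocommutativity of $U_h$ modulo $h$ forces $[f,g]_{U_h^*}\in hI^2$, so $[h^{-1}f,h^{-1}g]=h^{-2}[f,g]$ reduces modulo $h$ to the Poisson bracket on $U_h^*/hU_h^*\cong\mathcal{O}(sl_2^*)$ coming from the co-Poisson structure of $Usl_2$. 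A direct computation on the generators $X,Y,H$ using the explicit coproduct of $U_h$ yields the Lie cobracket $\delta(X)=\tfrac{1}{2}H\wedge X$, $\delta(Y)=\tfrac{1}{2}Y\wedge H$, $\delta(H)=0$, which is the standard Lie bialgebra on $sl_2$. By Remark \ref{dualLPG} its dual integrates to $(H,\psi_*\{\,,\,\})$, establishing the required isomorphism. The main technical obstacle throughout is the careful bookkeeping of $h$-adic completions and their compatibility with the completed tensor products appearing in all the Hopf-algebraic compatibilities.
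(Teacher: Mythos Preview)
The paper gives no proof of this proposition; it is quoted verbatim from \cite[Prop.~10.3]{ES} and used as a black box. So there is nothing in the paper to compare your argument against, and your sketch is essentially an outline of the standard proof (the Drinfeld--Gavarini quantum duality construction) that one finds in the cited reference.

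That said, your part (b) contains a genuine error. You claim the pairing $U_h\otimes U_h^\circ\to\mc[[h]]$ lands in $\mc[[h]]$, and try to justify this by saying $\langle u,f\rangle\in h^l\mc[[h]]$ for $f\in I^l$ and $u$ in ``a suitable $h$-adic completion of the $l$-th power of the augmentation ideal of $U_h$''. But the pairing must be defined for \emph{all} $u\in U_h$, and for generic $u$ this divisibility fails: already $\langle X^2,(X^*)^2\rangle=2+O(h)$ (compute $\Delta(X^2)$ and pair with $X^*\otimes X^*$), so $\langle X^2,h^{-2}(X^*)^2\rangle\notin\mc[[h]]$. The pairing naturally takes values in $\mc((h))$, not $\mc[[h]]$. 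The ``duality'' in the proposition is not a perfect $\mc[[h]]$-bilinear pairing in the naive sense; rather, $U_h^\circ$ is a QUE algebra whose underlying QFSHA (obtained by undoing the $h^{-l}I^l$ rescaling) is the genuine topological dual $U_h^*$. The phrase ``completion of the natural pairing'' in the statement is doing more work than you allow for, and the precise formulation of what ``dual'' means here is exactly what the reference \cite{ES} supplies.

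Your parts (a) and (c) are correct in outline and follow the standard route: the filtration estimates on $I$ give the Hopf closure, and the identification of the semiclassical limit with $U\mathfrak{h}$ via the co-Poisson structure on $Usl_2$ is the right mechanism.
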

We call $U_h^\circ = U_h \mathfrak{h}$ the \textit{QUE dual} of $U_h$. 
\smallskip

\subsubsection{The finite dual} There is another notion of dual Hopf algebra that we will meet in the sequel. Its definition makes sense for arbitrary Hopf algebras (see \cite[$\S$ 9.1]{Mo} or \cite[Ch. 3, Prop. 1.1.3]{KS}): 
\begin{definition}\label{defdualHopf} The \emph{finite dual} of a $k$-algebra $A$ is the subspace $A^\bullet$ of the linear dual $A^*$ defined by
$$A^\bullet = \{ f\in A^*\ |\ f(I) = 0 \mathrm{\ for \ some\ ideal\ } I \mathrm{\ such \ that\ } {\rm dim}_\mc (A /I) <\infty \}.$$ 
 \end{definition}
We have:
\begin{proposition}\label{dualisHopf} \textit{If $A$ is a Hopf algebra, the finite dual $A^\bullet$ is a Hopf algebra, with structure maps defined by \eqref{propertiesdual} under the natural pairing $\langle \ , \ \rangle\colon A\otimes A^* \ra k$. Moreover, $A^\bullet$ is the largest subspace $V$ of $A^*$ with coproduct in $V\otimes V$.}
\end{proposition}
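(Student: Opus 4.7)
My plan is to decompose the proposition into two threads: checking that $A^\bullet$ is closed under each structure map dual to that of $A$, so the Hopf-algebra axioms transfer through the pairing relations \eqref{propertiesdual}; and independently proving maximality. Both rest on one key lemma: for $f \in A^*$, one has $f \in A^\bullet$ if and only if $\mu^*(f) \in A^* \otimes A^*$, where $\mu^*$ denotes the transpose of the product $\mu$ of $A$. The forward implication I will handle first: if $f$ vanishes on a cofinite two-sided ideal $I$, then the bilinear form $(a,b) \mapsto f(ab)$ factors through $(A/I) \otimes (A/I)$ and so has finite rank; writing $\tilde\phi_i \in A^\bullet$ for the lifts to $A$ of a dual basis of $A/I$ yields an explicit decomposition $\mu^*(f) = \sum_{i,j} c_{ij}\, \tilde\phi_i \otimes \tilde\phi_j$, already inside $A^\bullet \otimes A^\bullet$. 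The reverse implication I will postpone: it will drop out of the maximality argument.

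Given the lemma, closure of $A^\bullet$ under coproduct is built in. For the product, if $f, g \in A^\bullet$ vanish on cofinite ideals $I, J$, the subspace $K := \Delta^{-1}(I \otimes A + A \otimes J)$ is a two-sided ideal of $A$ because $\Delta$ is an algebra morphism and $I \otimes A + A \otimes J$ is an ideal of $A \otimes A$; it is cofinite since $A/K$ embeds into the finite-dimensional quotient $(A/I) \otimes (A/J)$; and Sweedler expansion of $\Delta(a)$ for $a \in K$ forces $(fg)(a) = 0$. For the antipode, if $f(I) = 0$ then $S^*(f)$ vanishes on $S^{-1}(I)$, which is a two-sided ideal because $S$ is an anti-algebra morphism, and cofinite since $a \mapsto \overline{S(a)} \in A/I$ has kernel $S^{-1}(I)$ and image in a finite-dimensional space. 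The unit and counit of $A^*$ (dual to counit and unit of $A$) visibly lie in $A^\bullet$. All coalgebra, bialgebra, and antipode axioms for $A^\bullet$ then transfer mechanically from those of $A$ by dualizing through \eqref{propertiesdual}: each reduces to an identity already valid on $A$.

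For maximality, let $V \subset A^*$ satisfy $\mu^*(V) \subset V \otimes V$ and fix $f \in V$. My plan is to exhibit a finite-dimensional subspace $C \subset V$ with $f \in C$ and $\mu^*(C) \subset C \otimes C$ — this is the standard finiteness theorem that every element of a comultiplication-stable space lies in a finite-dimensional comultiplication-stable subspace. One proves it by starting from a decomposition $\mu^*(f) = \sum_{i=1}^r f_i \otimes g_i$ inside $V \otimes V$ with $f_i, g_j$ each linearly independent, and using coassociativity of $\mu^*$ to force $\mu^*(f_i) \in \mathrm{span}(f_j) \otimes A^*$ and symmetrically for the $g_j$, whereupon $\mathrm{span}\{f, f_1, \ldots, f_r, g_1, \ldots, g_r\}$ is itself comultiplication-stable. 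Given such a $C$, its annihilator ${}^\perp C := \{a \in A : g(a) = 0\ \forall g \in C\}$ has codimension at most $\dim C$, and is a two-sided ideal: for $a \in {}^\perp C$, $b \in A$, and $g \in C$ with $\mu^*(g) = \sum_i g_i \otimes h_i \in C \otimes C$, we obtain $g(ab) = \sum_i g_i(a) h_i(b) = 0$ since each $g_i(a) = 0$, and the right-absorption is symmetric. Thus $f$ vanishes on the cofinite ideal ${}^\perp C$, so $V \subset A^\bullet$. Applied to $V := \mathrm{span}\{f, f_i, g_j\}$ for any $f \in A^*$ with $\mu^*(f) \in A^* \otimes A^*$, the same argument closes the reverse implication of the key lemma. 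The main obstacle is the finite-dimensionality step for $C$: everything else is routine ideal-theoretic bookkeeping or pure dualization, but bounding the iterated coproduct genuinely requires coassociativity and is the one non-formal ingredient in the proof.
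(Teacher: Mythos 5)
Your proof is correct, but it takes a genuinely different route from the one the paper indicates. The paper's argument is representation-theoretic: it invokes the equivalent characterization of $A^\bullet$ as the span of matrix elements $c^V_{l,v}$ of finite-dimensional $A$-modules, after which closure under product, coproduct and antipode follows at once from the fact that finite-dimensional $A$-Mod is closed under tensor products, direct sums and duals, and maximality follows because any $f$ with $\mu^*(f)\in A^*\otimes A^*$ is a matrix element of the cyclic module it generates. You instead work directly with the cofinite-ideal definition: your key lemma identifies $A^\bullet$ with $\{f\in A^*: \mu^*(f)\in A^*\otimes A^*\}$ via explicit factorization of $(a,b)\mapsto f(ab)$ through $(A/I)\otimes(A/I)$, closure under the product comes from the cofinite ideal $\Delta^{-1}(I\otimes A + A\otimes J)$, closure under the antipode from $S^{-1}(I)$, and maximality from the fundamental finiteness theorem for comultiplication-stable subspaces together with the observation that the annihilator ${}^\perp C$ of a finite-dimensional stable $C$ is a cofinite two-sided ideal. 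This is essentially the proof in Montgomery \S 9.1 (which the paper cites but does not reproduce). The trade-off: the paper's route is shorter and dovetails with its representation-theoretic theme, but hinges on first establishing the equivalence of the two definitions of $A^\bullet$; yours is longer but self-contained and works straight from the stated definition. One small expository caveat in your maximality step: when you ``apply the same argument'' to close the reverse implication of the key lemma, the space $V=\mathrm{span}\{f,f_i,g_j\}$ is not a priori comultiplication-stable, so you should note that the coassociativity computation producing $\mu^*(f_i)\in\mathrm{span}(f_j)\otimes A^*$ (and its mirror) needs only $\mu^*(f)\in A^*\otimes A^*$ and the independence of the $g_j$, not stability of $V$ — then $C$ is automatically stable and the annihilator argument goes through unchanged.
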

Note that the coproduct of $A$ always gives $A^*$ an algebra structure; only the product may cause some trouble, as its adjoint for $\langle\ ,\ \rangle$ may not map into the subspace $A^*\otimes A^* \subset (A\otimes A)^*$. 

The finite dual $A^\bullet$ can be equivalently defined as the subalgebra of $A^*$ generated by the \textit{matrix elements} of all finite dimensional $A$-modules $V$, that is, by the linear functionals
\begin{equation}\label{me} \fonc{c^V_{l,v}}{A}{\mc}{a}{l(a.v)}\end{equation}
where $v\in V$, $l\in V^*$. In these terms, Proposition \ref{dualisHopf} follows from the fact that the category of finite-dimensional $A$-modules is closed under taking duals, direct sums and tensor products. 

A topological interpretation of $A^\bullet$ is as follows. Let
$$\Jj = \{ {\rm Ker}(\rho)\ |\ \rho\colon A\ra {\rm End}(V) \ \mbox{is a finite dimensional representation} \}.$$
Define the \textit{$\Jj$-adic topology} on $A$ by taking as a base of neighborhoods of $a\in A$ the sets $\{a+J\ |\ J\in \Jj\}$. Similarly, define a topology on $A\otimes A$ by taking as a base of neighborhoods of $a\otimes b\in A\otimes A$ the sets $\{a\otimes b + L\ |\ L = A\otimes J+K\otimes A \ \mbox{and}\  J, K\in \Jj\}$. 
Then $A$ is a topological algebra (all the structure maps are continuous), and if the ground ring $k$ of $A$ is given the discrete topology, $A^\bullet$ is the set of continous $k$-linear maps $A\ra k$. For $A=U_h$, by taking representations $\rho$ on free $\mc[[h]]$-modules of finite rank, the finite dual $U_h^\bullet$ is the set of $\mc[[h]]$-linear maps $U_h \ra \mc[[h]]$ which are continuous for the $\Jj$-adic topology. It is usually called the \textit{quantized function ring}, and denoted by $SL_h(2)$ \cite[Th. 7.1.4]{CP}. 
\smallskip

The finite dual $U_q^\bullet$ is defined also for all values of $q\in \mc$, $q\ne -1$, $0$, $1$. At $q=\e$ it is very big, as it contains the representative functions on the bundle $\Xi_A$ of Theorem \ref{bundle2} (compare eg. with Theorem \ref{classdual} below). So $U_q^\bullet$ is usually defined in the litterature by restricting the matrix elements \eqref{me} to the $U_q$-modules $V_{r,q}^\pm$; then it coincides with the rational form $SL_q(2)$ of $SL_h(2)$ (see \cite[Ch. 13]{CP}, \cite[Ch. VII]{K}, \cite[Ch. 3]{KS}).
\subsection{The Heisenberg double $\Hh_h = \Hh(U_h)$} The Heisenberg double of $U_h$ is a topological $\mc[[h]]$-module isomorphic to $U_h \otimes U_h^\circ$, with an algebra structure given by a {\it smash product}. Let us recall this notion (see \cite{Mo}).
\begin{definition}\label{smashdef} Let $A$ be a Hopf $k$-algebra and $V$ a left $A$-module which is simultaneously an algebra. We say that $V$ is an {\it $A$-module algebra} if both its product $V\otimes V\ra V$ and unit $k\ra V$ are morphisms of $A$-modules. The \emph{smash product} $V \ \sharp \ A$ of $A$ and an $A$-algebra $V$ is the algebra isomorphic to $V\otimes A$ as a vector space, with product given by
$$(u \ \sharp \  a)(v \ \sharp \  b) = u(a_{(1)}.v)\ \sharp \  a_{(2)}b$$
for all $u$, $v\in V$ and $a$, $b\in A$, where we put $\Delta(a) = a_{(1)} \otimes a_{(2)}$ (Sweedler's sigma notation).
\end{definition}
The smash product $V \ \sharp \ A$ encodes the commutation relations between the linear operators induced by the left action of $V$ on itself, and the linear operators induced by the action of $A$ on $V$. Indeed, one checks without difficulty that:
\begin{lemma}\label{Heisrep} The map
\begin{equation}\label{Heisrepform}
 \fonc{\lambda}{V \ \sharp \ A}{{\rm End}(V)}{v\ \sharp \ a}{\left(u\mapsto v (a.u)\right)}
\end{equation}
defines a representation of $V \ \sharp \ A$ (the \emph{Heisenberg representation}).
\end{lemma}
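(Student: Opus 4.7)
The plan is to verify directly that $\lambda$ is an algebra homomorphism, since unitality is immediate from $\lambda(1\sharp 1)(u)=1\cdot(1.u)=u$. The only substantive point is multiplicativity, and this is where the module-algebra hypothesis on $V$ enters in an essential way.

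First I would unwind both sides of the identity $\lambda((v\sharp a)(w\sharp b))=\lambda(v\sharp a)\circ\lambda(w\sharp b)$ on an arbitrary $u\in V$. The right-hand side gives $v\cdot\bigl(a.(w\cdot(b.u))\bigr)$ by applying the two Heisenberg operators in sequence. The left-hand side, using the smash-product formula $(v\sharp a)(w\sharp b)=v(a_{(1)}.w)\sharp a_{(2)}b$, equals $v\cdot(a_{(1)}.w)\cdot\bigl((a_{(2)}b).u\bigr)$. So everything reduces to the identity
\begin{equation*}
a.(w\cdot(b.u)) \;=\; (a_{(1)}.w)\cdot\bigl((a_{(2)}b).u\bigr).
\end{equation*}

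The key step is to invoke the $A$-module algebra axiom, which says that the multiplication $V\otimes V\to V$ is $A$-linear when $V\otimes V$ carries the tensor action $\Delta(a).(x\otimes y)=(a_{(1)}.x)\otimes(a_{(2)}.y)$. Applied to $x=w$ and $y=b.u$ this yields $a.(w\cdot(b.u))=(a_{(1)}.w)\cdot(a_{(2)}.(b.u))$, and the right factor rewrites as $(a_{(2)}b).u$ by associativity of the $A$-action on $V$. Combining these two identities gives exactly what is needed, so $\lambda$ is an algebra map.

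There is essentially no obstacle here; the one thing to be careful about is keeping Sweedler's sigma notation consistent and distinguishing the two roles $V$ plays — once as the module on which $A$ acts and once as its own left regular module — so that the $A$-module-algebra compatibility is applied to the correct tensor factors. I would close by remarking that coassociativity of $\Delta$ guarantees $\lambda$ is well defined on the smash product (in particular its associativity), and that unitality of $\eta$ yields $\lambda(1\sharp 1)=\mathrm{id}_V$, so $\lambda$ is indeed a representation.
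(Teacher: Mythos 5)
Your verification is correct and is exactly the direct computation the paper has in mind when it says "one checks without difficulty." The essential step is indeed the module-algebra axiom applied to $w\otimes(b.u)$, combined with associativity of the $A$-action to collapse $a_{(2)}.(b.u)$ into $(a_{(2)}b).u$, and you execute this cleanly. One small imprecision in your closing remark: coassociativity of $\Delta$ is what makes the smash product itself an associative algebra (a prerequisite for the word "representation" to make sense), but it plays no role in checking that $\lambda$ respects the multiplication — that check is exactly the identity you derived and nothing more; likewise, unitality comes from $1\in A$ acting as the identity and $1\in V$ being the multiplicative unit, not from the counit $\eta$.
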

\begin{definition} \label{Heisdef} Let $\langle \ , \ \rangle\colon A\otimes A' \ra k$ be a duality of Hopf $k$-algebras $A$ and $A'$. The \textit{Heisenberg double} $\Hh (A)$ is the smash product $A\ \sharp \ A'$, where $A$ is made into an $A'$-module via the \emph{left regular action} 
\begin{equation}\label{leftregact}\fonc{\rightharpoonup }{A' \otimes A}{A}{x\otimes a}{x\rightharpoonup a = a_{(1)}\langle a_{(2)},x\rangle.}
\end{equation}
\end{definition}
For Heisenberg doubles, the fact that \eqref{Heisrepform} defines a representation follows from the commutation relation
\begin{equation}\label{commuterel}(1\ \sharp \ x)(a\ \sharp \ 1) = (a_{(1)}\ \sharp\ 1)(1\ \sharp \ \langle a_{(2)},x_{(1)}\rangle x_{(2)})
\end{equation}
for all $a\in A$, $x\in A'$. In particular, the Heisenberg double $\Hh_h = \Hh (U_h)$ is the topological algebra isomorphic to $U_h \otimes U_h^\circ$ as a $\mc[[h]]$-module, with product given by
$$(u \ \sharp \  x)(v \ \sharp \  y) = u(x_{(1)}\rightharpoonup v)\ \sharp \  x_{(2)}y = \langle v_{(2)},x_{(1)}\rangle u v_{(1)} \ \sharp \  x_{(2)} y$$
for all $u$, $v\in U_h$, $x$, $y\in U_h^\circ$. 
\smallskip

\subsubsection{A classical example: the cotangent bundle $T^*G$.} The Heisenberg double $\Hh_h$ may be understood as a deformation of the following classical geometric situation. Let $G$ be an affine algebraic group over $\mc$. There are three ``classical'' Hopf algebras associated to $G$: 
\begin{itemize}
 \item The group algebra $\mc G$, where $\Delta(g) = g\otimes g$, $\eta(g) = 1$, and $S(g) = g^{-1}$ for all $g\in G$.
\item The coordinate ring $\mc[G]\subset \mc G^\bullet$ (as the subset of algebraic maps).
\item The universal envelopping algebra $U\mathfrak{g}$, with structure maps \eqref{UEAdef}.
\end{itemize}
Note that the adjoint action of $G$ on $\mathfrak{g}$ extends linearly to give an action of $\mc G$ on $U\mathfrak{g}$ by Hopf algebra automorphisms. Denote by $e$ the identity element of $G$, and identify $U\mathfrak{g}$ with the space of all left invariant differential operators on $G$. For simplicity, assume that $G$ is connected, simply connected, and semisimple. We have:
\begin{theorem}\label{classdual} \emph{(See \cite[Th. 4.3.13]{A})} The pairing \begin{equation}\label{classpair} \fonc{\langle \ ,\ \rangle}{U\mathfrak{g} \times \mc[G]}{\mc}{X \otimes f}{(X.f)(e)}\end{equation} induces an injective morphism $U\mathfrak{g} \ra \mc[G]^\bullet$ and an isomorphism $\mc[G]\ra U\mathfrak{g}^\bullet$. The Hopf algebra $\mc[G]^\bullet$ is generated by $U\mathfrak{g}$ and the evaluation maps $f\mapsto f(g)$, $g\in G$. More precisely, $\mc[G]^\bullet \cong U\mathfrak{g} \otimes \mc G$ as a coalgebra, and its product and antipode are given by
\begin{equation}\label{smash1} (X \otimes  f)(Y \otimes  g) = X(f_{(1)}.Y)\ \otimes \  f_{(2)}g,\end{equation} $$S(X\otimes f) = S(f_{(1)}).S(X)\otimes S(f_{(2)}).$$
\end{theorem}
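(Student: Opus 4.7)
The plan is to set up the two arrows separately, use the classical duality between differential operators at the identity and jets of functions to handle the infinitesimal part, then use the representation theory of connected simply-connected semisimple groups to handle the global part, and finally read off the product and antipode as a smash product.

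First I would check that $X \mapsto \langle X,\cdot\rangle$ lands in $\mc[G]^\bullet$. Any $X \in U\mathfrak{g}$, viewed as a left-invariant differential operator, evaluated at $e$ depends only on the $N$-jet of $f$ at $e$ for some $N=N(X)$. Hence $\langle X,\cdot\rangle$ annihilates $\mathfrak{m}_e^{N+1}$, which is an ideal of finite codimension in $\mc[G]$ (since $G$ is smooth), so $\langle X,\cdot\rangle \in \mc[G]^\bullet$. For injectivity, if $(X.f)(e)=0$ for all $f$, then $X$ kills every element of the completed local ring $\widehat{\Oo}_{G,e}\cong \mc[[\mathfrak{g}^*]]$; but left-invariant differential operators embed faithfully into $\mathrm{End}(\widehat{\Oo}_{G,e})$ via their symbols, so $X=0$. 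Next, for the second arrow, given $f\in\mc[G]$ I would consider the left $U\mathfrak{g}$-module $V_f := U\mathfrak{g}.f \subset \mc[G]$; by a standard Peter--Weyl argument for connected affine semisimple $G$, every $f$ is a finite sum of matrix coefficients of finite-dimensional $G$-representations, and $V_f$ is therefore finite-dimensional. The annihilator of $f$ in $U\mathfrak{g}$ has finite codimension, which puts $\langle\cdot,f\rangle$ in $U\mathfrak{g}^\bullet$. Injectivity follows from the fact that a regular function on a connected smooth variety is determined by its jet at any point (so $(X.f)(e)=0$ for all $X$ forces $f=0$).

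The main obstacle is surjectivity of $\mc[G]\lra U\mathfrak{g}^\bullet$, and this is precisely where connectedness and simple-connectedness are used. Given $\phi\in U\mathfrak{g}^\bullet$ vanishing on a two-sided ideal $I$ of finite codimension, the finite-dimensional algebra $U\mathfrak{g}/I$ acts on itself by left multiplication, giving a finite-dimensional left $U\mathfrak{g}$-module $M$ and hence a Lie algebra representation $\rho\colon \mathfrak{g}\lra \mathrm{End}(M)$. Since $\mathfrak{g}$ is semisimple, every such representation integrates to a holomorphic (hence algebraic, as $G$ is affine algebraic) representation $\tilde\rho\colon G\lra GL(M)$, using that $G$ is connected and simply-connected. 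Writing $\phi$ as the linear functional $u\mapsto \ell(u.v)$ on $M$ for some $v\in M$, $\ell\in M^*$ (possible since $M$ is a faithful $U\mathfrak{g}/I$-module containing a cyclic vector), the function $f(g) := \ell(\tilde\rho(g)v)\in \mc[G]$ satisfies $\langle X,f\rangle = (X.f)(e) = \ell(\rho(X)v) = \phi(X)$, establishing surjectivity.

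For the generation claim, the evaluation map $g\mapsto \delta_g\in \mc[G]^\bullet$ is a group homomorphism from $G$ into the group of grouplike elements of $\mc[G]^\bullet$, extending $\mc$-linearly to an injection $\mc G\hookrightarrow \mc[G]^\bullet$ (injectivity by linear independence of characters). Given any $\phi\in\mc[G]^\bullet$ annihilating an ideal $I$ of finite codimension, $\mc[G]/I$ is a finite-dimensional commutative algebra, hence by the Chinese Remainder Theorem a product of local Artinian algebras $\prod_{g\in\mathrm{Supp}(I)} \mc[G]/\mathfrak{m}_g^{n_g}$; each factor's dual is spanned by $\delta_g$ composed with differential operators at $g$, and translating a left-invariant operator by $g$ realizes all such operators. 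This gives $\phi$ as a finite sum of products $X\cdot \delta_g$, proving both that $\mc[G]^\bullet$ is generated as an algebra by $U\mathfrak{g}$ and evaluation maps, and that as a coalgebra $\mc[G]^\bullet\cong U\mathfrak{g}\otimes \mc G$ (the coalgebra structure matches because $\Delta$ on $\mc[G]^\bullet$ is dual to pointwise multiplication on $\mc[G]$, for which $U\mathfrak{g}$ satisfies the primitive-like Leibniz rule and $\mc G$ is grouplike). Finally, the smash-product formula \eqref{smash1} and the antipode follow by dualizing with \eqref{propertiesdual}: expanding $\langle (X\otimes f)(Y\otimes g), h\rangle$ via $\langle -,\Delta(h)\rangle$, using the adjoint action of $\mc G$ on $U\mathfrak{g}$ as $(f.Y)(h) = \langle Y, f_{(1)}\cdot h\cdot S(f_{(2)})\rangle$-style manipulations, and recognizing the outcome as the smash product of $\mc G$ on $U\mathfrak{g}$, which is a routine pairing computation and needs no representation theory input.
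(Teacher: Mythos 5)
The paper does not give a proof of this theorem at all: it is quoted verbatim from Abe's book (Th.~4.3.13), so there is no internal argument to compare against. Judged on its own merits, your sketch is correct and follows the standard route one finds in the references (duality between jets at $e$ and left-invariant differential operators for the first arrow; the fact that every regular function is a matrix coefficient for the second; integration of $\mathfrak{g}$-representations, using connectedness and simple-connectedness, for surjectivity of $\mc[G]\to U\mathfrak{g}^\bullet$; primary decomposition of a finite-codimension ideal in $\mc[G]$, combined with translation, for the generation claim; and duality under \eqref{propertiesdual} for the smash-product and antipode formulas).

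Two small technical points are worth tightening. First, when you argue that $\langle\cdot,f\rangle\in U\mathfrak{g}^\bullet$, the annihilator of $f$ is a \emph{left} ideal of finite codimension; you should pass to the kernel of $U\mathfrak{g}\to\mathrm{End}(V_f)$ to get the \emph{two-sided} ideal required by Definition~\ref{defdualHopf} (this is standard, but the step should be made explicit). Second, the parenthetical ``(hence algebraic, as $G$ is affine algebraic)'' is not quite the right justification: being affine algebraic does not by itself force a holomorphic representation to be rational (unipotent or torus factors would spoil this); what you need is the theorem that holomorphic finite-dimensional representations of connected complex \emph{semisimple} algebraic groups are automatically rational, which is exactly the standing hypothesis. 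With these glosses, the argument is complete.
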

In the situation of Theorem \ref{classdual}, the action of $U\mathfrak{g}$ on $\mc[G]\cong U\mathfrak{g}^\bullet$ by left invariant derivations takes a form dual to \eqref{leftregact}, since
\begin{equation}\label{actclas1}(X.f)(a) = \frac{{\rm d}}{{\rm dt}}\left( f(ae^{tX})\right)_{|t=0} = f_{(1)}(a)\langle X,f_{(2)}\rangle\end{equation}
for all $X\in \mathfrak{g}$, $f\in \mc[G]$ and $a\in G$, where $\langle\ ,\ \rangle$ is as in \eqref{classpair}. Hence, by dualizing Definition \ref{Heisdef} and considering $\mc[G]$ as an $U\mathfrak{g}$-module via \eqref{actclas1}, we find that $\Hh(\mc[G])=\mc[G] \ \sharp\ U\mathfrak{g}$ coincides with the algebra of all differential operators on $G$, with its usual action \eqref{Heisrepform} on $\mc[G]$. On another hand, the symmetrization map $$\fleche{S\mathfrak{g}}{U\mathfrak{g}}{x_1\ldots x_r}{\sum_{\sigma\in S_r}x_{\sigma(1)}\ldots x_{\sigma(r)}}$$ yields vector space isomorphisms $\mc[\mathfrak{g}^*] \cong S\mathfrak{g}\cong U\mathfrak{g}$ that allow one to identify $\Hh(\mc[G])$ with the space of functions on the cotangent bundle $T^*G \cong G\ltimes \mathfrak{g}^*$. The algebra structure is determined by the pairing \eqref{classpair}, and hence by the canonical symplectic structure on $T^*G$ via the map assigning to each function its Hamiltonian vector field.  
\smallskip

\subsubsection{$\Hh_h$ as a deformation of $\mc[T^*G]$.} The cotangent bundle $T^*G$ is an example of Drinfeld double Lie group. It is associated to the \textit{trivial} Poisson-Lie structure of $G$. By starting with the \textit{standard} Poisson-Lie structure of $PSL_2\mc$ the general theory provides us with a Drinfeld double $\Dd(PSL_2\mc)$ diffeomorphic to $PSL_2\mc \times H$ in a neighborhood of the identity element $e$. It carries a Poisson bracket $\{\ ,\ \}_s$ which is non degenerate at $e$, and given by (see \cite[Prop. 6.1.16]{KS})
\begin{equation} \label{formsymp} \{f_1,f_2\}_s = \sum_i (\partial_i f_1\partial^i f_2-\partial^i f_1\partial_i f_2) + \sum_i (\partial_i' f_1(\partial^i)' f_2-(\partial^i)' f_1\partial_i'f_2).
\end{equation}
Here, $\partial_i$ and $\partial^i$ (resp. $\partial_i'$ and $(\partial^i)'$) are dual basis of right (resp. left) invariant vector fields on $PSL_2\mc$ and $H$, respectively, considered as vector fields on $\Dd(PSL_2\mc)$. The function space $\mc[\Dd(PSL_2\mc)]$ inherits from $\{\ ,\ \}_s$ a structure of Heisenberg double algebra $\Hh(\mc[PSL_2\mc])$ that may be deformed to $\Hh(U_h^\bullet)$ (\cite[Prop. 3.3]{STS}, \cite[Th. 3.10]{Lu2}). More precisely, by working dually and letting $\mathfrak{d}$ denote the Lie algebra of $\Dd(PSL_2\mc)$, one obtains:
\begin{proposition}\label{STSteo} \textit{The Heisenberg double $\Hh_h =\Hh (U_h)$ is a topological algebra deformation of $U\mathfrak{d}$ over $\mc[[h]]$, and is a quantization of $(\Dd(PSL_2\mc),\{\ ,\ \}_s)$.}
\end{proposition}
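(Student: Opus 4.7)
The plan is to verify the two assertions in sequence: first that $\Hh_h$ is topologically free over $\mc[[h]]$ with classical limit $U\mathfrak{d}$, and then that its quasi-classical structure coincides with $(\Dd(PSL_2\mc),\{\ ,\ \}_s)$.

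For the module-level identification, I start from $\Hh_h \cong U_h\hat\otimes U_h^\circ$ built into Definition \ref{Heisdef}. Both $U_h$ and $U_h^\circ \cong U_h\mathfrak{h}$ (Proposition \ref{QUEdual1}) are topologically free $\mc[[h]]$-modules with classical limits $Usl_2$ and $U\mathfrak{h}$ respectively as Hopf algebras. Since $(\mathfrak{d},sl_2,\mathfrak{h})$ is a Manin triple---$\mathfrak{d}=sl_2\oplus\mathfrak{h}$ as a vector space, with each summand a Lagrangian Lie subalgebra for the canonical invariant bilinear form---the PBW theorem yields a linear isomorphism $Usl_2\otimes U\mathfrak{h}\cong U\mathfrak{d}$. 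Combining these gives a $\mc[[h]]$-module isomorphism $\Hh_h\cong U\mathfrak{d}[[h]]$, so $\Hh_h$ is topologically free.

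To promote this to an algebra deformation statement, I analyze the smash product. The restrictions to $U_h\otimes 1$ and $1\otimes U_h^\circ$ reduce modulo $h$ to the usual products of $Usl_2$ and $U\mathfrak{h}$; the cross relations are governed by formula \eqref{commuterel}. For lifts of primitive generators $a\in sl_2$ and $\xi\in\mathfrak{h}$, the coproducts are primitive modulo $h$ with $O(h)$ corrections encoding the Lie bialgebra cobrackets, while the duality pairing $\langle a,\xi\rangle$ restricts at leading order to the Manin-triple pairing. A direct expansion of \eqref{commuterel}, combined with its opposite, produces the mixed bracket of $\mathfrak{d}=sl_2\bowtie\mathfrak{h}$ at the first relevant order in $h$, so that the classical limit of the smash product recovers the defining cross relations of $U\mathfrak{d}$ on the PBW decomposition $Usl_2\otimes U\mathfrak{h}$. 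The compatibility with coassociativity and the bialgebra axioms for the Manin triple guarantees there is no obstruction in higher orders.

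For the Poisson statement, I use symmetrization to view the associated graded $U\mathfrak{d}\cong S\mathfrak{d}\cong \mc[\mathfrak{d}^*]$ as a commutative algebra of polynomial functions; the induced Poisson bracket is then automatically the linear Kirillov--Kostant bracket on $\mathfrak{d}^*$. On the other hand, evaluating \eqref{formsymp} at $e\in\Dd(PSL_2\mc)$ and pairing $(\partial_i,\partial^i)$, $(\partial_i',(\partial^i)')$ at $e$ with the dual bases of $sl_2$ and $\mathfrak{h}$ provided by the Manin pairing shows that $\{\ ,\ \}_s$ linearizes at the identity to the same Kirillov--Kostant bracket on $T_e^*\Dd(PSL_2\mc)\cong\mathfrak{d}^*$. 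To promote this local identification to a global one on $\Dd(PSL_2\mc)$, I invoke the integration theorems of \cite[Prop. 3.3]{STS} and \cite[Th. 3.10]{Lu2}: the Heisenberg double of a QUE algebra deforms the full Poisson algebra on the double Poisson-Lie group, not merely its formal neighborhood of the identity.

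The main obstacle is the careful bookkeeping of Lie-bialgebra data inside the smash-product commutator: one must align the $O(h)$ correction of the coproducts in $U_h$ and $U_h^\circ$ with the normalization of the duality pairing between primitive elements, so that the resulting first-order bracket extracted from \eqref{commuterel} reproduces \emph{exactly} the mixed Drinfeld-double bracket $[\xi,a]_\mathfrak{d}$ rather than a scalar multiple or a central extension thereof. Once this infinitesimal computation is pinned down, the passage from the Kirillov--Kostant picture at $e$ to the full symplectic structure $\{\ ,\ \}_s$ on $\Dd(PSL_2\mc)$ follows from the universal relationship between Manin triples, double Lie groups, and their symplectic doubles as established in \cite{STS,Lu2}.
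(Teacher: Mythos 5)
The paper itself offers no argument for Proposition~\ref{STSteo}: it is presented as a reformulation of results of Semenov-Tian-Shansky and Lu, and the text preceding it simply cites \cite[Prop.~3.3]{STS} and \cite[Th.~3.10]{Lu2}. You ultimately invoke the same references for the global statement, so the two ``proofs'' do not differ in substance; what you add is a sketch of how one would verify the classical-limit claims, and it is that sketch which contains a gap.

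The gap is in the algebra identification $\Hh_h/h\Hh_h \cong U\mathfrak{d}$, and you have in fact put your finger on it yourself without resolving it. If one expands \eqref{commuterel} naively for primitive $a\in sl_2\subset U_h$ and $\xi\in\mathfrak{h}\subset U_h^\circ$ using $\Delta(a)=a\otimes 1+1\otimes a+O(h)$ and $\Delta(\xi)=\xi\otimes 1+1\otimes\xi+O(h)$, the commutator $[\xi,a]$ picks up the \emph{scalar} term $\langle a,\xi\rangle\cdot 1$, which is the Heisenberg relation, not the mixed Drinfeld-double bracket $[\xi,a]_{\mathfrak{d}}$. What makes the proposition true is precisely the $h^{-l}$ rescaling built into the definition of $U_h^\circ=\sum_{l\geq 0}h^{-l}I^l$ (Proposition~\ref{QUEdual1}): the pairing of a primitive of $U_h$ with a primitive of $U_h^\circ$ is $O(h)$, so the scalar contribution dies at $h=0$ and only the terms coming from the $O(h)$ cobracket corrections of $\Delta$ survive, producing the coadjoint-action pieces of $[\ ,\ ]_{\mathfrak{d}}$. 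Calling this ``careful bookkeeping'' understates it --- it is the whole content of the classical-limit computation, and your sketch leaves it unverified. Separately, your treatment of the word ``quantization'' does not match the paper's: you linearize $\{\ ,\ \}_s$ at $e$ to a Kirillov--Kostant bracket and compare with the graded commutator, whereas the sentence following the proposition defines quantization by the condition that the canonical element $r=\lambda^{-1}({\rm id}_{U_h})$ of $\Hh_h$ satisfy $r=1+h\pi_s \bmod h^2$. These are compatible viewpoints, but what you prove is not literally what the paper asserts.
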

The last claim means that the Poisson bracket $\{\ ,\ \}_s$ determines a bivector $\pi_s\in \mathfrak{d}\otimes \mathfrak{d}$ such that $r= 1+h \pi_s$ mod$(h^2)$, where \begin{equation}\label{canpre} r= \lambda^{-1}({\rm id}_{U_h})                                                                                                                                                                                                                                                                                                           \end{equation}
and $\lambda$ is the canonical map $\lambda\colon U_h \otimes U_h^\circ \lra {\rm Hom}_{\mc[[h]]}(U_h)$. 
\begin{remark} As suggested by $T^*G$ and $\Dd(G)$ above, the quantum Heisenberg doubles (resp. \eqref{canpre}) are closely related to Fourier duality and quantum Drinfeld doubles (resp. universal $R$-matrices). We refer to \cite{STS} and \cite{Lu,CR} for results in this direction.\end{remark}
\subsection{The canonical morphism $\Rr\colon \Xi^{(2)} \ra \Xi^{(2)}$} Recall the bundle $\Xi_M$ of Theorem \ref{bundle2}. Let ${\rm Spec}(Z_\e)^2_{reg}$ denote the set of regular pairs of central characters. Define a new bundle
\begin{equation}\label{XI2} \Xi^{(2)}\colon M_\e^{ (2)} \ra {\rm Spec}(Z_\e)^2_{reg}
\end{equation}
by restricting the base space of the product bundle $\Xi_M\times \Xi_M$ and regarding each fiber as a tensor product of $\Ue$-modules; hence, for any $(\rho,\mu)\in {\rm Spec}(Z_\e)^2_{reg}$, the fiber over $(\rho,\mu)$ has the form $V_\rho \otimes V_\mu$. By Remark \ref{fixed}, the fibers over any two pairs $(\rho,\mu),(\nu,\kappa)\in {\rm Spec}(Z_\e)^2_{reg}$ such that $\tau(\rho)\tau(\mu) = \tau(\nu)\tau(\kappa)$ are isomorphic $\Ue$-modules.
\begin{proposition}\label{actext2} The coproduct induces an action $\Delta_\Gg$ of $\Gg$ on $\Xi^{(2)}$ by bundle morphisms. 
\end{proposition}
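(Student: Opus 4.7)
The plan is to construct the action of $\Gg$ on $\Xi^{(2)}$ by transporting each one-parameter generator $\exp(tD_a)$ of $\Gg$ (for $a\in\{e,f\}$) through the coproduct to an automorphism of a completion of $\hat U_\e\otimes\hat U_\e$, then to invoke simplicity of the fibers to conclude.

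First, exploiting Lemma \ref{HopfZ0} (so that $\Delta(Z_0)\subset Z_0^{\otimes 2}$), I would imitate Proposition \ref{center} and define, for each $a\in Z_0$, a derivation
$$D^{(2)}_a(\xi) \;=\; \lim_{q\to\e}\frac{[\Delta(\tilde a),\tilde\xi]}{n(q^n-q^{-n})}$$
of $\hat U_\e\otimes\hat U_\e$ that preserves $Z_0^{\otimes 2}$ and $Z_\e^{\otimes 2}$. The explicit forms $\Delta(e)=e\otimes 1+z\otimes e$ and $\Delta(f)=1\otimes f+f\otimes z^{-1}$ coming from \eqref{nonres3} let me express $D^{(2)}_e,D^{(2)}_f$ in terms of $D_e,D_f,D_z$ and the central generators via the Leibniz rule; then the convergence argument of Proposition \ref{qca} adapts verbatim to show $\exp(tD^{(2)}_e)$ and $\exp(tD^{(2)}_f)$ converge for every $t\in\mc$ to automorphisms of a suitable completion of $\hat U_\e^{\otimes 2}$.

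Next, I would define $\Delta_\Gg$ on generators by $\exp(tD_a)\mapsto\exp(tD^{(2)}_a)$ and check it extends to a well-defined group homomorphism $\Gg\to\mathrm{Aut}(\hat U_\e^{\otimes 2})$. The decisive input is the intertwining identity
$$\Delta\circ D_a \;=\; D^{(2)}_a\circ\Delta$$
on $\hat U_\e$, immediate from the fact that $\Delta$ is an algebra morphism, hence commutes with commutators and with the defining limit. This says $\Delta$ is equivariant between the $\Gg$-action on $\hat U_\e$ and the prospective $\Delta_\Gg(\Gg)$-action on $\hat U_\e^{\otimes 2}$; combined with the observation that the Lie-algebraic relations among the $D_a$ inside the Lie algebra $\tilde{\mathfrak g}$ of Exercise \ref{formPoisson}(d) transport to the same relations among the $D^{(2)}_a$ through the coproduct (both brackets being controlled by the Hopf structure of $Z_0$), the defining relations of $\Gg\subset\mathrm{Aut}(\hat U_\e)$ lift to the corresponding relations in $\Delta_\Gg(\Gg)$.

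Finally, to descend the action to $\Xi^{(2)}$, I would apply Theorem \ref{moduleteo1}(i): for regular $(\rho,\mu)$, $\Ue^\rho\otimes\Ue^\mu\cong M_{n^2}(\mc)$ acts faithfully on $V_\rho\otimes V_\mu$, making the latter a simple $\hat U_\e^{\otimes 2}$-module of dimension $n^2$. Any automorphism $\Delta_\Gg(g)$ then sends it to another simple $\hat U_\e^{\otimes 2}$-module of dimension $n^2$, which by the classification of simples over a tensor product of algebras must be of the form $V_{\rho'}\otimes V_{\mu'}$, with $(\rho',\mu')$ determined by the action of $\Delta_\Gg(g)$ on $\mathrm{Spec}(Z_\e^{\otimes 2})$; hence $\Delta_\Gg$ realises $\Gg$ as a group of bundle morphisms of $\Xi^{(2)}$. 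The main obstacle I expect is the group-homomorphism step: since $\Gg$ is given only as the abstract subgroup of $\mathrm{Aut}(\hat U_\e)$ generated by $\exp(tD_e),\exp(tD_f)$, making the transport of relations precise requires careful tracking of the Lie-algebraic data of Exercise \ref{formPoisson}(d) through the coproduct, and checking that the completion of $\hat U_\e^{\otimes 2}$ is compatible with the one-parameter flows.
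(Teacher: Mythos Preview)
Your approach is essentially the paper's: define derivations on $\hat U_\e^{\otimes 2}$ by transporting $D_a$ through $\Delta$, check they preserve $Z_\e^{\otimes 2}$, integrate as in Proposition \ref{qca}, and descend to the bundle as in Theorem \ref{bundle2}. The paper's explicit formula $\Delta(D_e(u)) = (D_e\otimes 1 + z\otimes D_e - D_z\otimes zx)\Delta(u)$ is exactly what your Leibniz computation yields (note your $\Delta(f)$ should read $f\otimes 1 + z\otimes f$), and the group-homomorphism worry you raise is sidestepped in the paper by simply \emph{defining} $\Delta_\Gg$ as the subgroup of $\mathrm{Aut}(\hat U_\e^{\otimes 2})$ generated by the integrated flows $\Delta(\exp(tD_e))$, $\Delta(\exp(tD_f))$.
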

\begin{proof} As in Theorem \ref{bundle2} it is enough to show that the coproduct induces a homomorphism $\Gg \ra  {\rm Aut}(\hat{U}_\e \otimes \hat{U}_\e)$ whose image preserves $Z_\e\otimes Z_\e$. 
For all $x\in \Ue$ one computes in $U_q$ that 
\begin{align*} \Delta (\left[\frac{E^n}{[n]!},u\right]) & =   \left[\frac{E^n}{[n]!}\otimes 1+ K^n\otimes \frac{E^n}{[n]!}+\ldots ,\Delta(u)\right]\\
& = \left[\frac{E^n}{[n]!},u_{(1)}\right]\otimes u_{(2)} + \left[\frac{K^n}{[n]!}, u_{(1)}\right] \otimes E^n u_{(2)} + \\
& \hspace{5cm} + u_{(1)}K^n\otimes \left[\frac{E^n}{[n]!},u_{(2)}\right]+\ldots\end{align*}
where we put $\Delta(u) = u_{(1)} \otimes u_{(2)}$, and the dots refer to elements which are vanishing at $q=\e$. By specializing at $q=\e$ and using \eqref{der0} we get
\begin{equation}\label{coactGXi}\Delta(D_e(u)) = (D_e \otimes 1 + z \otimes D_e - D_z \otimes z x)\Delta(u).\end{equation}
The right hand side is a derivation of $\hat{U}_\e \otimes \hat{U}_\e$ preserving $Z_\e\otimes Z_\e$. As in the proof of Proposition \ref{qca}, it can be integrated to a $1$-parameter group of automorphisms $\Delta (\exp(tD_e))$. Similar facts hold true for $\Delta(D_f(u))$ as well. We let $\Delta_\Gg$ be generated by the actions of $\Delta (\exp(tD_e))$  and $\Delta (\exp(tD_f))$, $t\in \mc$. 
\end{proof} 
We wish to let the element \eqref{canpre} act on the left on $\Xi^{(2)}$, or equivalently, by conjugation on the bundle of algebras obtained from $\Xi^{(2)}$ by replacing $\Xi_M$ with $\Xi_A$. In order for this to make sense, we realize it as an element of $\Hh_h\otimes \Hh_h$:  
\begin{definition} The \emph{canonical element of $\Hh_h$} is $R = \tau \circ (i \otimes i') (r)_{21} \in \Hh_h \otimes \Hh_h$, where the subscript ``$_{21}$'' means that the tensorands are permutated, and $i\colon U_h \ra \Hh_h$ and $i'\colon U_h^\circ \ra \Hh_h$ are the natural linear embeddings in $\Hh_h\cong U_h \otimes U_h^\circ$.\end{definition}
By writing $r = \sum_i e_i\otimes e^i$ in dual topological basis $\{ e_i\}$ and $\{e^i\}$ of $U_h$ and $U_h^\circ$, we have
\begin{equation}\label{express} R = \sum_i (1\ \sharp \ e^i) \otimes (e_i \ \sharp\ 1).\end{equation}
The next results sums up the fundamental properties of $R$ (compare with \cite[Th. IX.4.4]{K} and \cite[Prop. 1.2]{BS}). To simplify notations, we will often identify $u\in U_h$ with $i(u) = u\ \sharp \ 1$ and $x\in U_h^\circ$ with $i'(x) = 1\ \sharp \ x$.
\begin{proposition}\label{propcan}\textit{ i) We have} $R^{-1} = (S\otimes {\rm id})(R)$ and $({\rm id}\otimes \Delta)(R) = R_{12}R_{13}$.

\noindent \textit{ii) The following identities hold true:}
\begin{equation}\label{twistcop2bis}
 R(u \otimes 1) = \Delta(u)R\ ,\ (1\otimes x)R = R \Delta(x)
\end{equation}
\textit{for all $u\in U_h$ and $x\in U_h^\circ$, and} 
\begin{equation}\label{constpent2} R_{12}R_{13}R_{23} = R_{23}R_{12}.\end{equation}
\noindent \textit{iii) Denote by $\mu$ the product of $U_h$. The image $\Rr = (\lambda \otimes  \lambda)(R) \in {\rm Aut}_{\mc[[h]]}(U_h \otimes U_h)$ of the canonical element under the Heisenberg representation \eqref{Heisrepform} is given by} \begin{equation}\label{autprop} \Rr = ({\rm id}\otimes \mu)(\Delta \otimes {\rm id}).\end{equation}
\end{proposition}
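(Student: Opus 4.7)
The proof works by identifying $R$ with the canonical element $r = \sum_i e_i \otimes e^i \in U_h \otimes U_h^\circ$ written in dual topological bases, and then translating identities between dual bases (via the pairing relations \eqref{propertiesdual}) into $\Hh_h \otimes \Hh_h$ through the embeddings $i$, $i'$. Two completeness identities will be used repeatedly: $\sum_i \langle u, e^i\rangle e_i = u$ for $u \in U_h$ and $\sum_i \langle e_i, f\rangle e^i = f$ for $f \in U_h^\circ$.

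I would first treat (iii), since the explicit formula it produces feeds back into the others. Applying the Heisenberg representation \eqref{Heisrepform} to the expression \eqref{express} and using the left regular action \eqref{leftregact}, one has $\lambda(1\ \sharp\ e^i)(u) = e^i \rightharpoonup u = u_{(1)}\langle u_{(2)}, e^i\rangle$ and $\lambda(e_i\ \sharp\ 1)(v) = e_i v$. Summing over $i$ and applying completeness in the middle slot yields $\Rr(u\otimes v) = u_{(1)}\otimes u_{(2)} v = ({\rm id}\otimes \mu)(\Delta \otimes {\rm id})(u\otimes v)$.

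For (i), the formula $({\rm id}\otimes \Delta)(R) = R_{12}R_{13}$ is the dualization of $\langle u, fg\rangle = \langle \Delta(u), f\otimes g\rangle$: writing both sides in dual bases yields the same triple sum weighted by the structure constants of $\Delta$ on $U_h$. The identity $R^{-1} = (S\otimes {\rm id})(R)$ then follows from the antipode axiom: expanding $(S\otimes {\rm id})(R)\cdot R$ using the product in $\Hh_h^{\otimes 2}$ gives $\sum_{i,j}(1\ \sharp\ S(e^i)e^j)\otimes (e_i e_j\ \sharp\ 1)$, and its pairing with an arbitrary $u\otimes f\in U_h\otimes U_h^\circ$ reduces, via \eqref{propertiesdual} and completeness, to $\langle S(u_{(1)})u_{(2)}, f\rangle$, which equals $\eta(u)\eta(f)$ by \eqref{antipode}; this is exactly the pairing of $1\otimes 1$ with $u\otimes f$.

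For (ii), the commutation relations rest on \eqref{commuterel}. Expanding $R(u\otimes 1)$, applying \eqref{commuterel} to each factor $(1\ \sharp\ e^i)(u\ \sharp\ 1)$, and simplifying
$$\sum_i \langle u_{(2)}, (e^i)_{(1)}\rangle (e^i)_{(2)} \otimes e_i = \sum_k e^k \otimes u_{(2)} e_k$$
by a double application of completeness (after expanding $\Delta(e^i)$ on $U_h^\circ$ in the basis $\{e_j e_k\}$ dualized through $\langle ab,f\rangle = \langle a\otimes b,\Delta(f)\rangle$) yields $R(u\otimes 1) = \sum_k (u_{(1)}\ \sharp\ e^k)\otimes (u_{(2)} e_k\ \sharp\ 1) = \Delta(u)R$. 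The symmetric identity $(1\otimes x)R = R\Delta(x)$ is proved in the same way. The Pentagon equation is most transparent via (iii): iterating $\Rr(a\otimes b) = a_{(1)}\otimes a_{(2)} b$ gives
\begin{align*}
\Rr_{12}\Rr_{13}\Rr_{23}(u\otimes v\otimes w) &= u_{(1)(1)}\otimes u_{(1)(2)} v_{(1)} \otimes u_{(2)} v_{(2)} w, \\
\Rr_{23}\Rr_{12}(u\otimes v\otimes w) &= u_{(1)}\otimes u_{(2)(1)} v_{(1)}\otimes u_{(2)(2)} v_{(2)} w,
\end{align*}
and the two coincide by the coassociativity \eqref{coass} of $\Delta$. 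The main delicate point I anticipate is lifting this equality from ${\rm End}(U_h^{\otimes 3})$ back to $\Hh_h^{\otimes 3}$: either one invokes the faithfulness of $\lambda^{\otimes 3}$ on the relevant subspace, or one performs a direct algebraic verification in $\Hh_h^{\otimes 3}$ using \eqref{commuterel} together with the companion identity $(\Delta\otimes {\rm id})(R) = R_{13}R_{23}$ (the dual of the identity established in (i), proved the same way). That promotion is where the bulk of the technical bookkeeping will reside.
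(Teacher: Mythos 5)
Your proof is correct in substance and proceeds along essentially the same lines as the paper's: expand $R$ in dual topological bases as in \eqref{express}, and translate the claimed identities through the pairing axioms \eqref{propertiesdual} together with the completeness relations $\sum_i\langle u,e^i\rangle e_i = u$, $\sum_i\langle e_i,f\rangle e^i = f$. A few remarks on the differences. You treat (iii) first and then feed the explicit formula $\Rr(u\otimes v)=u_{(1)}\otimes u_{(2)}v$ into the rest; the paper does (i), (ii), (iii) in order, leaving (iii) as an exercise, and your filling-in of (iii) is correct. For (i) you check that $(S\otimes{\rm id})(R)$ is a \emph{left} inverse of $R$, the paper checks it is a \emph{right} inverse; both suffice. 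The substantive divergence is in the Pentagon equation \eqref{constpent2}. Your primary route passes through the Heisenberg representation and therefore needs faithfulness of $\lambda^{\otimes 3}$, which you rightly flag as the delicate point. The paper sidesteps this entirely by a one-line argument directly in $\Hh_h^{\otimes 3}$: moving $R_{23}$ to the right past the $U_h$-leg of $R_{12}$ in slot $2$ via the first identity of \eqref{twistcop2bis} gives $R_{23}R_{12}=\sum_i e^i_1\,\Delta(e_i)_{23}\,R_{23}=({\rm id}\otimes\Delta)(R)\,R_{23}=R_{12}R_{13}R_{23}$ by (i), with no faithfulness required and no need to establish your companion identity $(\Delta\otimes{\rm id})(R)=R_{13}R_{23}$ (which does hold, and your proposed fallback would also work, but at the cost of the extra bookkeeping you anticipate). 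If you keep the Heisenberg-representation route as your main argument you must actually justify faithfulness; otherwise, the in-$\Hh_h^{\otimes 3}$ calculation above is the cleaner choice and is the one the paper uses.
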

\begin{proof} i) We have $R (S\otimes {\rm id})(R) = \sum_{i,j}(1\ \sharp\ e^i(e^j\circ S)) \otimes (e_ie_j\ \sharp\ 1)$. Then, by using \eqref{propertiesdual} we see that for all $x$, $y\in U_h^\circ$ and $u$, $v\in U_h$ we have
\begin{align}
\langle R (S\otimes {\rm id})(R), x\otimes u\otimes y\otimes v\rangle & = x(1)\eta(v)\sum_{i,j,(u)} y(e_ie_je^i(u_{(1)})e^j(S(u_{(2)}))) \notag \\ & = x(1)\eta(v) y(u_{(1)}S(u_{(2)}))\notag \\ & = x(1)\eta(v) y(1) \eta(u) \notag\\ & = \langle 1\otimes 1\otimes 1\otimes 1,x\otimes u\otimes y\otimes v\rangle.\notag
\end{align}
Here, $\eta$ is the counit of $U_h$, and $\langle\ ,\ \rangle$ the product of the natural duality pairings. It follows that $R^{-1} = (S\otimes {\rm id})(R)$. As for the second identity, note that we have $(\langle u,\ \rangle \otimes {\rm id})(R) = u$, $({\rm id}\otimes \langle\ ,x\rangle)(R) = x$, and similarly for $y$. Hence
$$\langle u\otimes x \otimes y,({\rm id} \otimes \Delta)(R)\rangle = \langle x\otimes y, \Delta(u)\rangle.$$
On another hand $\langle u\otimes x \otimes y,R_{12}R_{13}\rangle = \langle ({\rm id}\otimes \langle\ ,x\rangle)(R) ({\rm id}\otimes \langle\ ,y\rangle)(R),u\rangle = \langle xy,u\rangle$. The result follows again from \eqref{propertiesdual}.

ii) For all $u\in U_h$ and $x\in U_h^\circ$, we have $({\rm id}\otimes \langle \ ,x \rangle)(R(u \otimes 1)) = xu$, and
\begin{align}({\rm id}\otimes \langle \ ,x \rangle)(\Delta(u)R) & = \sum_i u_{(1)}e^i\langle u_{(2)}e_i,x\rangle \notag \\ & = \sum_i u_{(1)}e^i\langle u_{(2)},x_{(1)} \rangle\langle e_i,x_{(2)}\rangle \notag\\ & = u_{(1)}\langle u_{(2)},x_{(1)} \rangle x_{(2)} \notag .\end{align}
Together with \eqref{commuterel} this proves the first identity in \eqref{twistcop2bis}. The second one is similar. Finally, $R_{23} R_{12} = ({\rm id}\otimes \Delta )(R)R_{23}= R_{12} R_{13} R_{23}$ by \eqref{twistcop2bis} and i). 

iii) One checks \eqref{autprop} by a straighforward computation, writing $R$ as in \eqref{express}. Details are left as an exercise. 
\end{proof}
Like \eqref{embedalg} we have an embedding of the rational form $U_q\mathfrak{h}$ of $U_h^\circ = U_h\mathfrak{h}$ into $U_h^\circ$, and hence of the smash product $\Hh_q = U_q \ \sharp\  U_q\mathfrak{h}$ into $\Hh_h$. One can check that the conjugation action of $R$ on $\Hh_h\otimes \Hh_h$ induces an (outer) automorphism $\Rr$ of the subalgebra $\Hh_q\otimes \Hh_q$. Moreover, in complete analogy with the semi-classical situation considered in \cite{WX}, $\Hh_q$ has a double structure of quantum groupoid through which $\Rr$ factors to define an automorphism of $\Uq \otimes \Uq$. Then, by specializing at $q=\e$ and considering the resulting left action on tensor products of simple $\Ue$-modules, using \eqref{twistcop2bis}-\eqref{constpent2}, \eqref{twistcop} and \eqref{modpent} we get: 
\begin{theorem} \label{Rmorphism} \textit{The canonical element $R$ of $\Hh_h$ induces a bundle morphism $\Rr \colon \Xi^{(2)} \lra \Xi^{(2)}$ such that}
\begin{equation}\label{morphpent} \Rr_{12}\Rr_{13}\Rr_{23} = \Rr_{23}\Rr_{12}
\end{equation}
\textit{and $\Rr (u \otimes {\rm id}) = \Delta(u)\Rr$ for all $u\in \Ue$. Hence $\Rr$ coincides at $(\rho,\mu)\in {\rm Spec}(Z_\e)^2_{reg}$ with the evaluation map $K(\rho,\mu)$, and the regular $6j$-symbol operator $\Rr(\rho,\mu,\nu)$ is also a value of $\Rr$.} 
\end{theorem}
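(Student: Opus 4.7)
The plan is to extract the automorphism $\Rr$ of $\Uq\otimes \Uq$ from the canonical element $R\in \Hh_h\otimes \Hh_h$ in three stages of descent, and then specialize at $q=\e$ to obtain the bundle morphism and its properties.

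First I would pass from $\Hh_h$ to the rational form $\Hh_q = \Uq \ \sharp\ U_q\mathfrak{h}$ by noting that the embeddings $i\colon U_q\hookrightarrow U_h$ of \eqref{embedalg} and the analogous rational embedding $U_q\mathfrak{h}\hookrightarrow U_h^\circ = U_h\mathfrak{h}$ are compatible with the smash product structure. The key point is that the commutation relation \eqref{commuterel} involves only the matrix coefficients of $\Delta$, and these are algebraic in $q$, so $\Hh_q$ sits inside $\Hh_h$ as a $\mc[q,q^{-1}]$-subalgebra. By Proposition \ref{propcan} ii), conjugation by $R$ preserves the subalgebra generated by the images of $\Uq$ and $U_q\mathfrak{h}$, because \eqref{twistcop2bis} only expresses cross commutation relations via $\Delta$. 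This gives an algebra automorphism $\Rr$ of $\Hh_q\otimes \Hh_q$ satisfying the constant pentagon \eqref{constpent2}.

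Next I would descend from $\Hh_q\otimes \Hh_q$ to $\Uq\otimes \Uq$. This is the step I expect to be the main obstacle, since it requires the double quantum groupoid structure on $\Hh_q$ alluded to in the paragraph preceding the theorem, by analogy with the semiclassical situation of Weinstein-Xu \cite{WX}. The idea is that $\Hh_q$ carries two algebra maps to $\Uq$ (``source'' and ``target'' in groupoid language), and the intertwining identities \eqref{twistcop2bis} imply that $\Rr$ restricts to (equivalently, factors through the product of $\Uq$ and the image of the counit of) $\Uq\otimes \Uq$. Concretely, \eqref{autprop} shows that at the level of $U_h$ the operator $\Rr = ({\rm id}\otimes \mu)(\Delta\otimes {\rm id})$ only involves the product and coproduct of $\Uq$, so it makes sense directly on $\Uq\otimes \Uq$; the Heisenberg double is really a device encoding $\Delta$ as an inner automorphism, and checking directly that the formula \eqref{autprop} gives a well-defined automorphism of $\Uq\otimes \Uq$ satisfying \eqref{constpent2} and the intertwining property $\Rr(u\otimes 1) = \Delta(u)\Rr$ avoids the groupoid language.

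Specializing at $q=\e$, the operator $\Rr$ acts on $\Ue\otimes \Ue$ and by Proposition \ref{actext2} preserves $Z_\e\otimes Z_\e$ fiberwise (since conjugation commutes with the central character of a tensor product, which by \eqref{group0} is the product of the central characters of the factors). This identifies $\Rr$ fiberwise with an isomorphism $V_\rho\otimes V_\mu \to V_{\rho'}\otimes V_{\mu'}$ for the pair $(\rho',\mu')$ with $\tau(\rho')\tau(\mu')=\tau(\rho)\tau(\mu)$, and local trivializations of the bundle $\Xi^{(2)}$ as in \eqref{XI2} promote this to a bundle morphism $\Rr\colon \Xi^{(2)}\to \Xi^{(2)}$. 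The intertwining relation $\Rr(u\otimes{\rm id})=\Delta(u)\Rr$ then says precisely that on each fiber $V_\rho\otimes V_\mu$, conjugation by $\Rr$ transforms the ``standard'' coproduct $\Delta_0$ into the true coproduct $\Delta_{(\rho,\mu)}$, which is the characterization \eqref{twistcop} of the evaluation map $K(\rho,\mu)$ (uniqueness being up to scalar, which is absorbed in the normalization). Finally, the pentagon \eqref{morphpent} reduces on three-fold tensor products to \eqref{constpent2} after threading through the identifications, and comparing with the operator form \eqref{modpent} of Definition \ref{6jdef} identifies the appropriate component of $\Rr$ on $M(fgh)\otimes M(gh)$ with $\Rr(\rho,\mu,\nu)$. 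This will complete the proof.
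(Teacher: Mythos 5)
Your proposal follows the paper's own outline essentially step for step: embed the rational form $\Hh_q = U_q\ \sharp\ U_q\mathfrak{h}$ into $\Hh_h$, observe that conjugation by $R$ preserves $\Hh_q\otimes\Hh_q$, descend to $\Uq\otimes\Uq$ via the quantum-groupoid structure of $\Hh_q$ in analogy with Weinstein--Xu, then specialize at $q=\e$ and use \eqref{twistcop2bis}, \eqref{twistcop}, \eqref{modpent} to identify the resulting fiberwise operator with the evaluation map and the $6j$-operator. The paper's own ``proof'' is the prose paragraph just preceding the theorem, which explicitly delegates the middle step to ``one can check,'' so your level of detail matches the source.

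The one place where your alternative route misfires is the shortcut around the groupoid factorization via \eqref{autprop}. The formula $\Rr=(\mathrm{id}\otimes\mu)(\Delta\otimes\mathrm{id})$, i.e.\ $u\otimes v\mapsto u_{(1)}\otimes u_{(2)}v$, is a \emph{linear} isomorphism of $U_h\otimes U_h$ (the image of $R$ under the Heisenberg representation), not an algebra automorphism of $\Uq\otimes\Uq$; multiplicativity fails because $v$ and $u'_{(2)}$ do not commute. What the groupoid structure is doing for the paper is converting \emph{conjugation} by $R$ — an honest algebra automorphism of $\Hh_q\otimes\Hh_q$ — into an algebra automorphism of $\Uq\otimes\Uq$. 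Only after that, at $q=\e$, does one restrict to the simple quotients $\Ue^\rho\otimes\Ue^\mu\cong M_n(\mc)\otimes M_n(\mc)$ from Theorem \ref{moduleteo1}(i), where Skolem--Noether makes the automorphism inner, and the implementing element is what acts on the fiber $V_\rho\otimes V_\mu$. Your direct appeal to \eqref{autprop} tries to descend the linear operator through the ideals $\mathcal{I}^\rho\otimes\Ue+\Ue\otimes\mathcal{I}^\mu$, but that descent is not automatic (the coproduct of $\mathrm{Ker}(\rho)$ need not land in the right ideal); it is precisely the inner-automorphism argument that is doing the work. The remainder of your argument — identifying the fiber operator with $K(\rho,\mu)$ via the intertwining relation, and reducing \eqref{morphpent} to \eqref{constpent2} — is consistent with the paper.
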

Note that the Pentagon equation \eqref{penteq} is equivalent to \eqref{morphpent}. By identifying $K$ with $\Rr$ each multiplicity module $M(h)$ gets a natural structure of $\Ue$-module, such that $\Rr$ is $\Ue\otimes \Ue$-linear at points $(\rho,\mu)$ where $\tau(\rho)\tau(\mu)=h$. \smallskip

Finally, let us go back to the quantum coadjoint action. By passing the coproducts on the left in \eqref{twistcop2bis}, we get two actions of $\Ue$ and $U_\e\mathfrak{h}$ on the set of bundle morphisms of $\Xi^{(2)}$, which leave $\Rr$ invariant.  By using these actions one can show that:
\begin{theorem}\label{equivariant} The $\Gg$-action on $\Xi_M$ extends to ${\rm End}(\Xi^{(2)})$ by preserving $\Rr$.
\end{theorem}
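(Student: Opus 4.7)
The plan is to reduce $\Gg$-invariance of $\Rr$ to an infinitesimal commutation, and then derive the latter from the intertwining relations of the canonical element $R\in\Hh_h\otimes\Hh_h$ (Proposition \ref{propcan} ii)). The action of $\Gg$ on $\Xi^{(2)}$ given by $\Delta_\Gg$ (Proposition \ref{actext2}) induces on ${\rm End}(\Xi^{(2)})$ the conjugation action $\phi\mapsto\Delta_\Gg(g)\circ\phi\circ\Delta_\Gg(g)^{-1}$. Since $\Gg$ is generated by the one-parameter subgroups $\exp(tD_e)$ and $\exp(tD_f)$ (Definition \ref{qcagroup}), the desired invariance $g\cdot\Rr=\Rr$ for all $g\in\Gg$ reduces by continuity in $t$ to the infinitesimal identity
$$\bigl[\Delta_\Gg(D_a),\,\Rr\bigr]=0\quad\text{for}\quad a\in\{e,f\},$$
where $\Delta_\Gg(D_a)$ is the derivation of $\hat U_\e\otimes\hat U_\e$ described by \eqref{coactGXi}.

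The key step is to show that conjugation by $R$ converts the derivation $D_a\otimes{\rm id}$ into $\Delta_\Gg(D_a)$. By Theorem \ref{Rmorphism}, $\Rr$ is implemented by conjugation with $R$, and the identity $R(\tilde a\otimes 1)=\Delta(\tilde a)R$ of Proposition \ref{propcan} ii) holds for every $\tilde a\in U_h$. Specialising to a normalised lift $\tilde a/[n]!\in U_h$ of a generator $a\in Z_0$ and commuting both sides with a generic $\tilde u\otimes\tilde v\in U_h\otimes U_h$ yields
$$R\bigl[\tilde a/[n]!\otimes 1,\,\tilde u\otimes\tilde v\bigr]R^{-1}=\bigl[\Delta(\tilde a/[n]!),\,R(\tilde u\otimes\tilde v)R^{-1}\bigr].$$
Dividing by $(\e-\e^{-1})^n$ and letting $q\to\e$, the left-hand side converges by \eqref{der0} to $\Rr\circ(D_a\otimes{\rm id})\circ\Rr^{-1}$ evaluated at $u\otimes v$, whereas the right-hand side converges by \eqref{coactGXi} to $\Delta_\Gg(D_a)$ evaluated at $\Rr(u\otimes v)\Rr^{-1}$. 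This produces the intertwining $\Rr\circ(D_a\otimes{\rm id})=\Delta_\Gg(D_a)\circ\Rr$. A symmetric argument, this time starting from the companion identity $(1\otimes\tilde x)R=R\Delta(\tilde x)$ applied to lifts $\tilde x\in U_h^\circ$ of the coordinate functions on $H\cong{\rm Spec}(Z_0)$ (see Remark \ref{dualLPG}), yields $\Rr\circ({\rm id}\otimes D_a)=\Delta_\Gg(D_a)\circ\Rr$.

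To conclude, expand $\Delta_\Gg(D_a)\circ\Rr$ and $\Rr\circ\Delta_\Gg(D_a)$ using the presentation \eqref{coactGXi} of $\Delta_\Gg(D_a)$ as a combination of $D_a\otimes{\rm id}$, ${\rm id}\otimes D_a$ and correction terms whose coefficients lie in $Z_0\subset\Ue$; substituting the two intertwinings above and using the $(\Ue\otimes\Ue)$-linearity of $\Rr$ at points where it is defined (Theorem \ref{Rmorphism}), the contributions cancel and $[\Delta_\Gg(D_a),\Rr]=0$ follows. Integration over the one-parameter subgroups $\exp(tD_a)$ and composition then yield $\Gg$-invariance of $\Rr$. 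The main obstacle is controlling the $q\to\e$ limit in the commutator identity above: the element $R$ and its conjugation action are singular when specialised naively, yet $\text{Ad}(R)$ preserves the rational subalgebra $\Hh_q\otimes\Hh_q$ and passes to a well-defined endomorphism of $\Ue\otimes\Ue$, which is precisely the structural content of Theorem \ref{Rmorphism} that must be re-invoked to make the two limits coherent on each side of the identity.
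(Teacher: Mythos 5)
Your overall strategy — reduce to an infinitesimal identity and derive it from the intertwining relations of Proposition \ref{propcan}(ii) taken through the $q\to\e$ limit — is indeed the paper's indicated route ("passing the coproducts on the left in \eqref{twistcop2bis}"). But the two intertwinings you assert are not correct, and in fact they cannot both hold. If $\Rr\circ(D_a\otimes{\rm id})=\Delta_\Gg(D_a)\circ\Rr$ and $\Rr\circ({\rm id}\otimes D_a)=\Delta_\Gg(D_a)\circ\Rr$ were both true, then $\Rr\circ\bigl((D_a\otimes{\rm id})-({\rm id}\otimes D_a)\bigr)=0$, hence by invertibility of $\Rr$ we would get $D_a\otimes{\rm id}={\rm id}\otimes D_a$, which is absurd. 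A direct check with $\Rr(u\otimes v)=u_{(1)}\otimes u_{(2)}v$ and \eqref{coactGXi} shows that the first intertwining is off precisely by a $z$-twisted term:
$$\Delta_\Gg(D_e)\bigl(\Rr(u\otimes v)\bigr)-\Rr\bigl((D_e\otimes{\rm id})(u\otimes v)\bigr)= z\,u_{(1)}\otimes u_{(2)}D_e(v),$$
which equals $(z\otimes 1)\circ\Rr\circ({\rm id}\otimes D_e)$ applied to $u\otimes v$, not $\Rr\circ({\rm id}\otimes D_e)$. This extra multiplication by $z$ in the first slot (coming from the terms $z\otimes D_e$ and $-D_z\otimes zx$ in \eqref{coactGXi}) is exactly what your final "the contributions cancel" step does not address, and it does not cancel against the second intertwining since that relation is also false as written.

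There is also a type confusion in the second half of the argument: the generators $x,y,z$ of $Z_0\cong\mc[H]$ live in $\Ue\subset U_q$, and any lift of them belongs to $U_h$, not to the QUE dual $U_h^\circ=U_h\mathfrak{h}$. The companion identity $(1\otimes\tilde x)R=R\Delta(\tilde x)$ requires $\tilde x\in U_h^\circ$, and Remark \ref{dualLPG} is about the Poisson-Lie duality between $H$ and $PSL_2\mc$, not about an embedding of $\mc[H]$ into $U_h\mathfrak{h}$; hence there is no well-defined "lift $\tilde x\in U_h^\circ$ of the coordinate functions on $H$" to feed into that relation. To make the second action enter you would instead have to pair the derivations $D_a$ with elements of $U_\e\mathfrak{h}$ via the left regular action \eqref{leftregact} — a genuinely dual mechanism, not a symmetric repeat of the first computation. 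Finally, note that even the target identity $[\Delta_\Gg(D_a),\Rr]=0$ is not quite the right infinitesimal formulation: $\Rr$ is a section of ${\rm End}(\Xi^{(2)})$ whose value varies over the base, and the $\Gg$-action on ${\rm End}(\Xi^{(2)})$ moves base points; the equivariance statement therefore also involves the derivative of $\Rr$ along the $\Gg$-orbits, which a pointwise commutator ignores.
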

In particular, $\Rr$ is constant along the $\Delta_\Gg$-orbits in $\Xi^{(2)}$. Since the $\Gg$-orbits in ${\rm Spec}(Z_\e)\setminus \Dd$  cover the orbits of the adjoint action in $PSL_2\mc^0$, Theorem \ref{bundle} (ii) and Remark \ref{fixed} show that $\Rr$ descends to a morphism of a vector bundle of rank $n^2$ over a covering of $PSL_2\mc/\!/PSL_2\mc$ of degree $2n$. 
\subsection{Matrix dilogarithms}\label{MATDIL}  According to Theorem \ref{equivariant} one can compute $\Rr$ by restricting to pairs $(\rho,\mu)$ such that $\varphi(\rho)$, $\varphi(\mu) \in PB_+$. So we could have developed the whole theory by starting with the Heisenberg double $\Hh(U_\e b)$ of a Borel subalgebra $U_\e b \subset \Ue$. This has been done in \cite{BB1,BB2}. Explicit formulas have shown that $\Rr$ satisfies tetrahedral symmetry relations, and produced an elementary form of \eqref{morphpent} reminiscent of the five-term identities satisfied by the classical dilogarithm functions. In order to state it we need a few preparation. 
\smallskip

Define a {\it QH tetrahedron} $\Delta(b,w,f,c)$ as an oriented abstract tetrahedron $\Delta$ endowed with:
\begin{itemize}
\item a {\it branching} $b$, defined as an orientation of the edges inducing an ordering of the vertices $v_i$ by stipulating that an edge points towards the greater of its endpoints. The $2$-faces $\delta_i$ are then ordered as the opposite vertices, and the edges of $\delta_3$ are denoted by $e_j$, where the source vertices of $e_0$ and $e_1$ are $v_0$ and $v_1$, respectively.
\item a triple $w=(w_0,w_1,w_2)$, where $w_j\in \mc
\setminus \{0,1\}$ is associated to $e_j$ and the opposite edge, and $w_{j+1} = 1/(1-w_j)$; hence $w$ corresponds to the {\it cross-ratio moduli} of an ideal hyperbolic tetrahedron.
\item a {\it flattening} $f=(f_0,f_1,f_2)$ and a {\it charge} $c=(c_0,c_1,c_2)$, where $f_j$, $c_j \in \mz$ are associated to $e_j$ and the opposite edge and satisfy:
\smallskip

\noindent {\it The flattening condition:} ${\rm l}_0 + {\rm l}_1 +{\rm
    l}_2 = 0$, where
\begin{equation}\label{flatcond}
{\rm l}_j = {\rm l}_j(b,w,f)= \log(w_j) + \sqrt{-1}\pi f_j;
\end{equation}
{\it The charge condition:} 
\begin{equation}\label{chargecond}
c_0+c_1+c_2=1.\end{equation}
\end{itemize}
The branching endows $\Delta$ with a {\it $b$-orientation}, positive and denoted by $*_b=1$ when the $2$-face $\delta_3$ inherits from the orientation of its edges the opposite of the boundary orientation. In Figure \ref{fcDelta2} we show the two branched tetrahedra for $*_b=\pm 1$ (up to global symmetries), together with the $1$-skeletons of the cell decompositions dual to the interiors. 


\begin{figure}[ht]
\begin{center}
\input{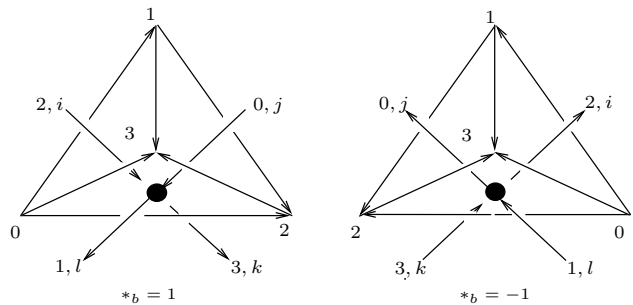}
\end{center}
\caption{\label{fcDelta2} Branched tetrahedra.}
\end{figure}

Recall that we denote by $n$ the order of $\e$. The {\it $n$th root cross-ratio moduli} of a QH tetrahedron $\Delta(b,w,f,c)$ are defined by  
\begin{equation}\label{qlb}
w_j' = \exp({\rm l}_{j,n}), \quad \mbox{where}\quad {\rm l}_{j,n} = \frac{1}{n}\left(\log(w_j) + \pi \sqrt{-1} (n+1)(f_j -*_bc_j)\right).
\end{equation}
The pairs $(w_0',w_1')\in \mc^2$ define global coordinates of the Riemann surface $\widehat{\mc}_n$ of the map  
\begin{equation}\label{maplift}
\fleche{\mc\setminus \{0,1\}}{\mc^2}{w_0}{(w_0^{\frac{1}{n}},(1-w_0)^{-\frac{1}{n}}).}
\end{equation}
Let us put $m:=(n-1)/2$, and
$$[x]:=n^{-1}\frac{1-x^n}{1-x},\quad g(x) := \prod_{j=1}^{n-1}(1 -
x\zeta^{-j})^{j/n},\quad h(x) := g(x)/g(1)$$
$$\omega(u',v'\vert n) := \prod_{j=1}^n \frac{v'}{1-u'\zeta^j},\quad
(u')^n + (v')^n = 1,\quad n \in \mn,$$ with $\omega(u',v'\vert
0) := 1$ by convention, and $x^{1/n} := \exp(\log(x)/n)$ is extended to $0^{1/n}:=0$ by continuity. The function $\omega$ is $n$-periodic in its integer argument, and $g$ is analytic
over $\mc \setminus \{r\zeta^k, r\geq 1,
k=1,\ldots,n-1\}$. \smallskip
\begin{definition}\label{matdildef} {\rm The ($n$th) {\it matrix dilogarithm} of a branched oriented tetrahedron $\Delta(b)$ is the regular map}
$$\fonc{\mathcal{R}_n(\Delta,b)}{\mz^2\times \widehat{\mc}_n}{{\rm Aut}(\mc^n\otimes \mc^n)}{(c_0,c_1,w_0',w_1')}{
\bigl((w_0')^{-c_1}(w_1')^{c_0}\bigr)^{\frac{n-1}{2}}(\Ll_n)^{*_b}(w_0',(w_1')^{-1})}$$
where 
\begin{align*} \Ll_n(u',v')_{k,l}^{i,j}& =  h(u')\
\zeta^{kj+(m+1)k^2}\ \omega(u',v'\vert i-k) \ \delta_n(i + j - l) \\
\bigl( \Ll_n(u',v')^{-1}\bigr)^{k,l}_{i,j}& =  \frac{[u']}{h(u')}\
\zeta^{-kj-(m+1)k^2}\ \frac{\delta_n(i+j-l)}{\omega(u'/\zeta,v'\vert
i-k)},
\end{align*}
\end{definition}
We will write $\mathcal{R}_n(\Delta,b)(c_0,c_1,w_0',w_1') = \Rr_n(\Delta, b, w, f,c)$. Note that the branching $b$ associates an index of $\Ll_n(w_0',w_1')^{\pm 1}$ to each $2$-face of $\Delta$ by the rule indicated in Figure \ref{fcDelta2}. 
\smallskip

Consider a move $T\ra T'$ between triangulated hexahedra $T$ and $T'$, as shown in Figure \ref{CQDidealt}. Assume that we have QH tetrahedra on both sides, having branchings that coincide at every common edge; in Figure \ref{CQDidealt} we have fixed one such global branching, but there are five other possible choices up to global symmetries.

\begin{figure}[ht]
\begin{center}
 \includegraphics[width=12cm]{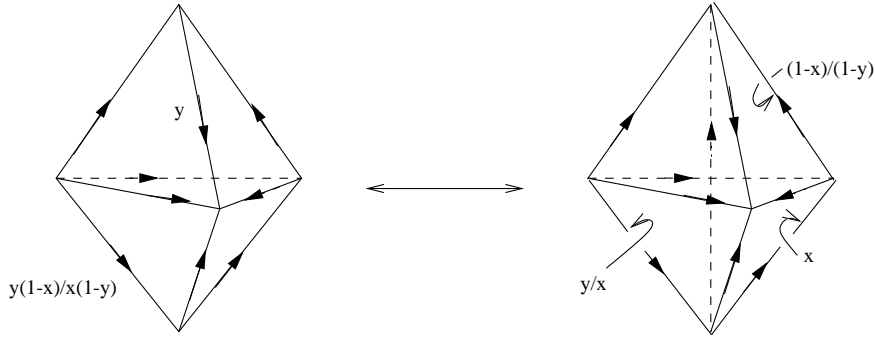}
\caption{\label{CQDidealt} An instance of transit.} 
\end{center}
\end{figure} 

Define 
\begin{equation}\label{totw} W_T'(e)=\prod_{h \rightarrow e} w'(h)^{*_{b}}\ ,\ C_T(e)= \sum_{h\rightarrow e} c(h)
\end{equation}
where:
\begin{itemize}
 \item ``$h\rightarrow e$'' means that $h$ is an edge of a QH tetrahedron of $T$ that is identified with the edge $e$ in $T$, and the products are over all such edges;
\item $*_{b}=\pm 1$ according to the $b$-orientation of the QH tetrahedron that contains $h$;
\item $w'(h)$ is the $n$th root cross-ratio modulus \eqref{qlb} at $h$, and $c(h)$ its charge.
\end{itemize}
The same notions are defined for $T'$. We say that $T\ra T'$ is a {\it $2\ra 3$ transit} if at every common edge $e$ we have 
\begin{equation}\label{preserveq}
W_T'(e)=W_{T'}'(e)\ ,\ C_T(e)=C_{T'}(e).
\end{equation}
The transit conditions are very restrictive; for instance, in Figure \ref{CQDidealt} we have shown the relations between the cross-ratio moduli $w_0$ of the five QH tetrahedra.\smallskip

To each QH tetrahedron of $T$ or $T'$,  a matrix dilogarithm $\Rr_n(\Delta, b, w, f,c)$ is associated. Define an {\it $n$-state} of $T$ or $T'$ as a function that gives every
$2$-simplex an index, with values in $\{0,\dots,n-1\}$. By the rule of Figure \ref{fcDelta2}, every $n$-state determines a matrix dilogarithm entry. As two adjacent tetrahedra induce opposite
orientations on a common face, an index is down for the matrix dilogarithm of one tetrahedron when it is up for the other. By summing over repeated indices we
get a tensor
\begin{equation}\label{tensor} \textstyle \Rr_n(T) = \sum_s \prod_{\Delta \subset T} \Rr_n(\Delta,b,w,f,c)_s
\end{equation} where the sum is over all $n$-states of $T$, and $\Rr_n(\Delta,b,w,f,c)_s$ stands
for the matrix dilogarithm entry determined by $s$. 
\smallskip

We can now state the analog of Theorem \ref{6jmain} for regular $\Ue$-modules. By comparing formulas it makes explicit the relationship between their $6j$-symbols and the matrix dilogarithms. The first claim is essentially a consequence of Theorem \ref{equivariant}. The rest is proved in \cite{BB1,BB2}. Recall the map $\varphi\colon {\rm Rep}(U_\e)\ra PSL_2\mc^0$ in \eqref{chi}, and the isomorphism $PSL_2\mc \cong {\rm Aut}(\mathbb{P}^1)$.
\begin{theorem} \label{reduction} 1) Let $(\rho,\mu,\nu)$ be a triple of $\Ue$-modules which is regular and cyclic. Put $f=\varphi(\rho)$, $g = \varphi(\mu)$, $h=\varphi(\nu)\in PSL_2\mc^0$. The $6j$-symbol operator $\Rr(\rho,\mu,\nu)$ coincides up to conjugacy with the map
$$\fonc{\Rr}{\widehat{\mc}^n}{{\rm Aut}(\mc^n\otimes \mc^n)}{(w_0',w_1')}{\Ll_n(w_0',(w_1')^{-1}),}$$
where $w_0 $ squared is the cross-ratio $ [0:f(0):fg(0):fgh(0)]$ of the points in $\mathbb{P}^1$, and the $n$th roots $w_0'$ and $w_1'$ are determined by the Casimir coordinates of $\Xi(\rho)$, $\Xi(\mu)$, $\Xi(\nu)\in {\rm Spec}(Z_\e)$. 

2) The matrix dilogarithm $\Rr_n(\Delta,b)$ satisfies:
\begin{itemize}
 \item Invariance under full tetrahedral symmetries, up to a determined projective action of $SL_2\mz$ on the source and target spaces of ${\rm Aut}(\mc^n\otimes \mc^n)$.
\item For any $2\ra 3$ transit $T\ra T'$ we have a \emph{five term identity} \begin{equation}\label{eqpentnice} \Rr_n(T) = \Rr_n(T').\end{equation}
\end{itemize}
\end{theorem}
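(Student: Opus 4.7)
My plan is to attack part 1) by exploiting the $\Gg$-equivariance of $\Rr$ established in Theorem \ref{equivariant}. Given a regular cyclic triple $(\rho,\mu,\nu)$, I first move it along the quantum coadjoint action to a triple whose images $f,g,h$ under $\varphi$ lie in the positive Borel $PB_+$: by Theorem \ref{QCAteo} the $\Gg$-orbit of a cyclic character covers a full adjoint orbit in $PSL_2\mc$, and every cyclic orbit meets $PB_+$. In this Borel normal form, Remark \ref{borelcase} collapses the decomposition \eqref{splitmodule2} to $n$ isomorphic copies of a single regular $U_\e b$-module, and the evaluation map $K(\rho,\mu)$ admits a fully explicit cyclic-basis expression in terms of $E$ and $K^{\pm 1}$. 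A direct computation using the $q$-binomial identity \eqref{qbinomial} reproduces the operator $\Ll_n(u',v')$ of Definition \ref{matdildef}, with $(u',v')$ controlled by the Casimir $n$-th roots of the tensorands (Remark \ref{idempreg}).

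To match the cross-ratio formula $w_0^2=[0:f(0):fg(0):fgh(0)]$, I use the identification $PSL_2\mc \cong \mathrm{Aut}(\mathbb{P}^1)$ from Section \ref{PLsection}. The four points $0,f(0),fg(0),fgh(0)$ are the successive images of $0\in \mathbb{P}^1$ under the composition of M\"obius transformations, and their cross-ratio is a projective invariant of the triple $(f,g,h)$. This invariant must agree with the one extracted from the explicit Borel computation since both are $\Gg$-invariant and depend only on $\tau(\rho)\tau(\mu)\tau(\nu)\in H$; a one-parameter check in $PB_+$ fixes the normalization. The refinement to specific $n$-th roots $w_0',w_1'$ is then determined by tracking the spectral projectors $e_\chi$ through the diagram \eqref{fusiondiag}, which picks the Casimir $n$-th root of $z$ selected in each simple summand.

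For part 2), the full tetrahedral symmetries of $\Rr_n(\Delta,b)$ come from the action of the vertex-permutation group of a branched $\Delta$ on the cross-ratio triple $(w_0,w_1,w_2)$ via the anharmonic involutions $w\mapsto 1/w,\ 1-w,\ldots$. These lift to an $SL_2\mz$ action on the logarithmic flattenings $\mathrm{l}_j$ and the charges $c_j$, and checking that the matrix entries of $\Ll_n(u',v')$ transform covariantly reduces to a finite list of Gauss-sum and quasi-periodicity identities for the function $\omega(u',v'\vert n)$ and the prefactor $h(u')$, together with the trivial invariance of $\delta_n(i+j-l)$ under the relevant index permutations.

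The five-term identity \eqref{eqpentnice} is the concrete face of the Pentagon equation \eqref{morphpent}. Given a $2\to 3$ transit $T\to T'$, I encode the five QH-tetrahedra via the data of a regular $4$-tuple $(\kappa,\rho,\mu,\nu)$ of $\Ue$-modules and apply Proposition \ref{3cocycloid}: each tetrahedron contributes one factor $\Rr(\cdot,\cdot,\cdot)$, and the edge conditions \eqref{preserveq} ensure compatibility of the cross-ratios (modulo $n$-th roots) and the charges at every shared edge. Matching the two compositions in Proposition \ref{3cocycloid} with the two triangulations yields $\Rr_n(T)=\Rr_n(T')$ after contraction over all internal $n$-states. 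The main obstacle, and the place that absorbs most of the work in \cite{BB1,BB2}, is the bookkeeping of branchings and of the integral lifts $(f_j,c_j)$: one must verify that the global consistency imposed by a transit exactly matches the integrality constraints under which the abstract Pentagon rewrites as the tensor equation on both sides.
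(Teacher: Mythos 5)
Your overall strategy — use the $\Gg$-equivariance of $\Rr$ from Theorem \ref{equivariant} to reduce the computation to the Borel locus $PB_+$, where the Heisenberg double of $U_\e b$ gives the explicit matrix form $\Ll_n$, and then recognize the five-term identity as the Pentagon equation \eqref{morphpent}/\eqref{penteq} packaged by the QH-tetrahedron bookkeeping of \cite{BB1,BB2} — is precisely the route the paper takes. The paper itself gives only a two-sentence proof, pointing to Theorem \ref{equivariant} for part 1) and to \cite{BB1,BB2} for the explicit formula matching and for part 2); you fill in the same skeleton with plausible detail.

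One step you should make more careful. You justify moving the triple $(\rho,\mu,\nu)$ into the Borel locus $PB_+^3$ by quoting Theorem \ref{QCAteo}, which describes the $\Gg$-orbit of a \emph{single} central character. But the relevant action on pairs (and triples) is the coproduct action $\Delta_\Gg$ of Proposition \ref{actext2}, whose generator \eqref{coactGXi} is not the componentwise derivation $D_e\otimes 1 + 1\otimes D_e$: the extra terms $z\otimes D_e - D_z\otimes zx$ make it a twisted diagonal. You cannot simply conjugate each of $f,g,h$ independently, and three generic elements of $PSL_2\mc$ are \emph{not} simultaneously upper-triangularizable. The reason the reduction works is structural: $\Delta$ restricted to $Z_0$ gives the group law of $H$, so $\Delta_\Gg$ covers an action on pairs that preserves the conjugacy class of the product $\sigma\tau(\rho)\sigma\tau(\mu)$ (and, on triples, of the partial products), while allowing enough freedom in the fiber to reach the Borel stratum. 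Stating this — rather than invoking the single-character Theorem \ref{QCAteo} — is what makes your reduction legitimate and is also what forces the ``up to conjugacy'' in the statement. The rest of your plan (extracting the cross-ratio from the $\mathrm{Aut}(\mathbb{P}^1)$ picture via $\Gg$-invariance, fixing the $n$-th roots by tracking the Casimir through \eqref{fusiondiag}, and reading the five-term identity from Proposition \ref{3cocycloid} together with the transit conditions \eqref{preserveq}) is consistent with the paper's outline.
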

Note that the tetrahedral symmetries of the matrix dilogarithms depend on the flattening and charge conditions \eqref{flatcond}--\eqref{chargecond}. They are necessary to get the five term identities for all the $2\ra 3$ transits.  
\smallskip

The cross-ratio $w_0$ is a complex number and is distinct from $0$ and $1$ because the triple $(\rho,\mu,\nu)$ is regular and cyclic. The $n$th root modulus $w_0'$ describes via Theorem \ref{bundle} ii) a coset of ${\rm Spec}(Z_\e)$ mod $\Gg$. The $3$-cocycloid identities \eqref{penteq} or \eqref{morphpent} coincide with \eqref{eqpentnice} exactly for the $2\ra 3$ transits $T\ra T'$ with the branching of Figure \ref{CQDidealt}. In this form, it is a non Abelian deformation of the celebrated five term identity
\begin{equation}\label{Rfivet}
{\rm L}(x) - {\rm L}(y) +{\rm L}(y/x) - {\rm L}(\frac{1-x^{-1}}{1-y^{-1}}) + 
{\rm L}(\frac{1-x}{1-y})=0
\end{equation}
which $x$, $y$ are real, $0 < y < x < 1$, and ${\rm L}$ is the dilogarithm function
\begin{equation}\label{Rdilog}
 {\rm L}(x) = -\frac{\pi^2}{6} -\frac{1}{2} \int_0^x \biggl(
\frac{\log(t)}{1-t} + \frac{\log(1-t)}{t} \biggr) \ dt.
\end{equation}
\bibliographystyle{amsalpha}

\end{document}